\documentclass[12pt,final]{article}
\usepackage[margin=2.5cm,top=1.5cm]{geometry}
\usepackage[all,arc]{xy}
\usepackage{mathrsfs}
\usepackage{amsmath,amsthm,amssymb,enumerate}
\usepackage{color}

\newtheorem{definition}{Definition}[section]

\newtheorem{theorem}[definition]{Theorem}

\newtheorem{proposition}[definition]{Proposition}

\newtheorem{example}[definition]{Example}
\newtheorem{conjecture}[definition]{Conjecture}
\newtheorem{problem}[definition]{Question}

\newtheorem{remark}[definition]{Remark}

\newcommand{\Romannum}[1]{\uppercase\expandafter{\romannumeral #1}}

\numberwithin{equation}{section}

\newcommand\keywordsname{Key words}
\newcommand\AMSname{AMS subject classifications}

\newenvironment{@abssec}[1]{%
     \if@twocolumn
       \section*{#1}%
     \else
       \vspace{.05in}\footnotesize
       \parindent .2in
         {\upshape\bfseries #1. }\ignorespaces
     \fi}
     {\if@twocolumn\else\par\vspace{.1in}\fi}

\begin{document}

\title{New result and some open problems on the primitive degree of nonnegative  tensors
\footnote{P. Yuan's research is supported by the NSF of China (Grant No. 11271142) and
the Guangdong Provincial Natural Science Foundation(Grant No. S2012010009942),
L. You's research is supported by the Zhujiang Technology New Star Foundation of
Guangzhou (Grant No. 2011J2200090) and Program on International Cooperation and Innovation, Department of Education,
Guangdong Province (Grant No.2012gjhz0007).}}
\author{Pingzhi Yuan \footnote{{\it{Corresponding author:\;}}yuanpz@scnu.edu.cn.}
 \qquad Zilong He\footnote{{\it{Email address:\;}}hzldew@qq.com.}
 \qquad Lihua You\footnote{{\it{Email address:\;}}ylhua@scnu.edu.cn.}
 }
\vskip.2cm
\date{{\small
School of Mathematical Sciences, South China Normal University,\\
Guangzhou, 510631, P.R. China\\
}}
\maketitle

\begin{abstract} In this paper, we show that the exponent set of nonnegative primitive tensors with order $m(\ge3)$ and dimension $n$
is $\{1,2,\ldots, (n-1)^2+1\}, $
and  propose some open problems for further  research.

\vskip.2cm \noindent{\it{AMS classification:}} 05C50;  15A69
 \vskip.2cm \noindent{\it{Keywords:}}  tensor; the exponent set; primitive tensor; primitive degree.
\end{abstract}

\section{Introduction and a survey}
\hskip.6cm A  nonnegative square matrix $A= (a_{ij})$ of order $n$ is nonnegative primitive (or simply, primitive) if $A^k>0$ for
some positive integer $k$. The least such $k$ is called the primitive exponent (or simply, exponent) of $A$ and is
denoted by $\gamma(A)$.

Since the work of Qi \cite{Qi05} and Lim \cite{Li05}, the study  of tensors which regarded as the generalization of matrices,
the spectra of tensors (and hypergraphs) and their various applications has attracted much attention and interest.

As is in  \cite{Qi05}, an order $m$ dimension $n$ tensor $\mathbb{A}= (a_{i_1i_2\ldots i_m})_{1\le i_j\le n \hskip.2cm (j=1, \ldots, m)}$ over the complex field $\mathbb{C}$ is a multidimensional array with all entries $a_{i_1i_2\ldots i_m}\in\mathbb{C}\, ( i_1, \ldots, i_m\in [n]=\{1, \ldots, n\})$.
In \cite{Ch1} and \cite{Ch2}, Chang et al investigated the properties of the spectra of nonnegative tensors,  defined the irreducibility of tensors and the primitivity of nonnegative tensors (as Definition \ref{defn11}), and extended many important properties of primitive matrices to primitive tensors.


\begin{definition} {\rm (\cite{Ch2, Pe102}) \label{defn11}} Let $\mathbb{A}$ be a nonnegative  tensor with order $m$ and dimension $n$,
$x=(x_1, x_2, \ldots, x_n)^T\in\mathbb{R}^n$ a vector and $x^{[r]}=(x_1^r, x_2^r, \ldots, x_n^r)^T$. Define the map $T_\mathbb{A}$ from $\mathbb{R}^n$ to $\mathbb{R}^n$ as: $T_\mathbb{A}(x)=(\mathbb{A}x)^{[\frac{1}{m-1}]}$. If there exists some positive integer $r$ such that $T_\mathbb{A}^r(x)>0$ for all nonnegative nonzero vectors $x\in\mathbb{R}^n$, then $\mathbb{A}$ is called primitive and the smallest such integer $r$ is called the primitive degree of $\mathbb{A}$, denoted by $\gamma(\mathbb{A})$. \end{definition}

Recently, Shao \cite{Sh12} defined the general product of two n-dimensional tensors as follows.
\begin{definition}\label{defn12}
Let $\mathbb{A}$ {\rm (}and $\mathbb{B}${\rm)} be an order $m\ge2$ {\rm (}and $k\ge 1${\rm)}, dimension $n$ tensor, respectively.
Define the general product  $\mathbb{A}\mathbb{B}$ to be the following tensor $\mathbb{D}$ of order $(m-1)(k-1)+1$ and dimension $n$:
$$ d_{i\alpha_1\ldots\alpha_{m-1}}=\sum\limits_{i_2, \ldots, i_m=1}^na_{ii_2\ldots i_m}b_{i_2\alpha_1}\ldots b_{i_m\alpha_{m-1}} \quad (i\in[n], \, \alpha_1, \ldots, \alpha_{m-1}\in[n]^{k-1}).$$
\end{definition}

The tensor product  is a generalization of the usual matrix product, and satisfies  a very useful property: the associative law (\cite{Sh12}, Theorem 1.1).
With the general product,  when $k=1$ and $\mathbb{B}=x=(x_1,\ldots, x_n)^T\in \mathbb{C}^n$ is a vector of dimension $n$,
then $\mathbb{A}\mathbb{B} = \mathbb{A}x$ is still a vector of dimension $n$, and for any $i\in [n],$
$$(\mathbb{A}\mathbb{B})_i=(\mathbb{A}x)_i=\sum\limits_{i_2, \ldots, i_m=1}^na_{ii_2\ldots i_m}x_{i_2}\ldots x_{i_m}.$$

As an application of the general tensor product defined by Shao \cite{Sh12},
Shao presented a simple characterization of the primitive tensors.
Now we  give the definition of ``essentially positive" which introduced by Pearson.

\begin{definition} {\rm (\cite{Pe10}, Definition 3.1)} \label{defn13}A nonnegative tensor $\mathbb{A}$ is  called essentially positive,
if for any nonnegative nonzero vector $x\in \mathbb{R}^n, \mathbb{A}x>0$ holds.
\end{definition}

\begin{proposition}{\rm(\cite{Sh12}, Proposition 4.1)} \label{pro14}
Let $\mathbb{A}$ be an order $m$ and dimension $n$ nonnegative  tensor. Then the following three conditions are equivalent:

{\rm (i). } For any $i, j\in[n], a_{ij\cdots j}>0$ holds.

{\rm (ii). } For any $j\in[n],  \mathbb{A}e_j>0$ holds {\rm(}where $e_j$ is the $j^{th}$ column of the identity matrix $I_n${\rm)}.

{\rm (iii). } For any nonnegative nonzero vector $x\in \mathbb{R}^n, \mathbb{A}x>0$ holds. \end{proposition}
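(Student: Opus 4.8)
The plan is to establish the three equivalences through the short cycle $(i)\Rightarrow(ii)\Rightarrow(iii)\Rightarrow(i)$, where I expect $(i)$ and $(ii)$ to turn out to be two readings of a single identity. First I would compute $\mathbb{A}e_j$ explicitly. Using the formula $(\mathbb{A}x)_i=\sum_{i_2,\ldots,i_m=1}^n a_{ii_2\ldots i_m}x_{i_2}\cdots x_{i_m}$ with $x=e_j$, each factor $(e_j)_{i_k}$ vanishes unless $i_k=j$, so the only surviving term in the multi-sum is the one with $i_2=\cdots=i_m=j$. This yields $(\mathbb{A}e_j)_i=a_{ij\cdots j}$ for every $i$. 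Consequently the statement ``$\mathbb{A}e_j>0$ for all $j$'' of $(ii)$ is literally the statement ``$a_{ij\cdots j}>0$ for all $i,j$'' of $(i)$, giving $(i)\Leftrightarrow(ii)$ at once.

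For $(ii)\Rightarrow(iii)$, I would take an arbitrary nonnegative nonzero vector $x$ and fix an index $j$ with $x_j>0$. Since $\mathbb{A}$ is nonnegative, $(\mathbb{A}x)_i$ is a sum of nonnegative terms, and it contains in particular the single term obtained by choosing $i_2=\cdots=i_m=j$, namely $a_{ij\cdots j}\,x_j^{m-1}$. Hence $(\mathbb{A}x)_i\ge a_{ij\cdots j}\,x_j^{m-1}$. By $(i)$/$(ii)$ we have $a_{ij\cdots j}>0$, and $x_j^{m-1}>0$, so $(\mathbb{A}x)_i>0$ for every $i$, that is, $\mathbb{A}x>0$. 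The direction $(iii)\Rightarrow(ii)$ is immediate, since each $e_j$ is itself a nonnegative nonzero vector, so applying $(iii)$ to $x=e_j$ returns $\mathbb{A}e_j>0$.

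The argument is essentially a bookkeeping exercise in the multi-index notation, so I do not anticipate a genuine obstacle; the one point requiring care is the evaluation of $\mathbb{A}e_j$, where one must confirm that the product $(e_j)_{i_2}\cdots(e_j)_{i_m}$ forces \emph{all} of $i_2,\ldots,i_m$ to equal $j$ simultaneously, not merely one of them, which is what collapses the $n^{m-1}$-term sum to the single entry $a_{ij\cdots j}$. Once this identity is in hand, the nonnegativity of $\mathbb{A}$ does the remaining work through the domination $(\mathbb{A}x)_i\ge a_{ij\cdots j}\,x_j^{m-1}$, and no estimate beyond ``a sum of nonnegative terms is at least any one of them'' is needed.
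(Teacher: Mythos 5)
Your proof is correct: the key identity $(\mathbb{A}e_j)_i=a_{ij\cdots j}$ gives $(i)\Leftrightarrow(ii)$, the domination $(\mathbb{A}x)_i\ge a_{ij\cdots j}x_j^{m-1}$ gives $(ii)\Rightarrow(iii)$, and $(iii)\Rightarrow(ii)$ is trivial. The paper itself does not reprove this proposition (it is quoted from \cite{Sh12}, Proposition 4.1), and your argument is essentially the standard one given in that reference, so there is nothing further to compare.
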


By Proposition \ref{pro14}, the following Definition \ref{defn15} is equivalent to Definition \ref{defn13}.

\begin{definition} {\rm (\cite{Sh12}, Definition 4.1)} \label{defn15}
A nonnegative tensor $\mathbb{A}$ is  called essentially positive, if it satisfies one of the three conditions in Proposition \ref{pro14}.\end{definition}

Let $Z(\mathbb{A})$ be the tensor obtained by replacing all the nonzero entries of $\mathbb{A}$ by one.
Then  $Z(\mathbb{A})$ is called the zero-nonzero pattern of A (or simply the zero pattern of $\mathbb{A}$).
In \cite{Sh12}, Shao showed the following characterization and defined the primitive degree by using
the properties of tensor product and the zero patterns.

\begin{proposition}\label{pro16}{\rm (\cite{Sh12}, Theorem 4.1)}
A nonnegative tensor $\mathbb{A}$ is primitive if and only if there exists some positive integer $r$ such that $\mathbb{A}^r$ is essentially positive. Furthermore, the smallest such $r$ is the primitive degree of $\mathbb{A}$, $\gamma(\mathbb{A})$.
\end{proposition}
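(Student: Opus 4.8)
The plan is to reduce the proposition to a single lemma that relates the zero--nonzero pattern of the iterate $T_\mathbb{A}^r$ to that of the general power $\mathbb{A}^r$. For a nonnegative vector $v\in\mathbb{R}^n$ write $P(v)=\{i\in[n]:v_i>0\}$ for its support. The key claim I would establish is: for every positive integer $r$ and every nonnegative $x\in\mathbb{R}^n$,
\[
P\big(T_\mathbb{A}^r(x)\big)=P\big(\mathbb{A}^r x\big),
\]
where $\mathbb{A}^r$ denotes the $r$-th general power (a tensor of order $(m-1)^r+1$) and $\mathbb{A}^r x$ its product with the vector $x$. Two elementary observations drive the argument. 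First, the componentwise root is pattern-preserving: since $m-1\ge 1$, a nonnegative real is positive iff its $(m-1)$-th root is positive, so $P\big(y^{[1/(m-1)]}\big)=P(y)$ for every nonnegative $y$; in particular $P(T_\mathbb{A}(x))=P(\mathbb{A}x)$. Second, for any nonnegative tensor $\mathbb{B}$ and nonnegative vector $v$, the entry $(\mathbb{B}v)_i$ is a sum of nonnegative terms, positive exactly when some summand has a nonzero coefficient and all of its vector factors lie in $P(v)$; hence $P(\mathbb{B}v)$ is determined by $\mathbb{B}$ and $P(v)$ alone.

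I would prove the claim by induction on $r$. The case $r=1$ is the first observation. For the inductive step, the definition of the iterate together with the two observations gives
\[
P\big(T_\mathbb{A}^{r+1}(x)\big)=P\big(T_\mathbb{A}(T_\mathbb{A}^r(x))\big)=P\big(\mathbb{A}\,T_\mathbb{A}^r(x)\big),
\]
and since $P(\mathbb{A}w)$ depends only on $P(w)$, the induction hypothesis $P(T_\mathbb{A}^r(x))=P(\mathbb{A}^r x)$ yields $P(\mathbb{A}\,T_\mathbb{A}^r(x))=P(\mathbb{A}(\mathbb{A}^r x))$. Finally the associative law for the general product (\cite{Sh12}, Theorem 1.1) gives $\mathbb{A}(\mathbb{A}^r x)=(\mathbb{A}\,\mathbb{A}^r)x=\mathbb{A}^{r+1}x$, so $P(T_\mathbb{A}^{r+1}(x))=P(\mathbb{A}^{r+1}x)$, completing the induction.

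With the claim in hand the proposition is immediate. By Definition \ref{defn11}, $T_\mathbb{A}^r(x)>0$ for a given $x$ means $P(T_\mathbb{A}^r(x))=[n]$; by the claim this holds iff $P(\mathbb{A}^r x)=[n]$, that is, iff $\mathbb{A}^r x>0$. Requiring this for every nonnegative nonzero $x$ is, by Proposition \ref{pro14} (condition (iii)) and Definition \ref{defn15}, exactly the statement that $\mathbb{A}^r$ is essentially positive. Thus the set of $r$ for which $T_\mathbb{A}^r$ is positive on all nonnegative nonzero vectors coincides with the set of $r$ for which $\mathbb{A}^r$ is essentially positive; consequently $\mathbb{A}$ is primitive iff some power $\mathbb{A}^r$ is essentially positive, and the least such $r$ equals $\gamma(\mathbb{A})$.

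The main obstacle is the inductive claim, and within it the clean separation of two effects: that the fractional root built into $T_\mathbb{A}$ never changes which coordinates vanish, and that iterating $T_\mathbb{A}$ can therefore be ``collapsed'' onto the single tensor power $\mathbb{A}^r$ through associativity. The point requiring the most care is the final step, since the associative law must be applied to a mixed product of the order-$m$ tensor $\mathbb{A}$, the order-$((m-1)^r+1)$ tensor $\mathbb{A}^r$, and the vector $x$; once that is justified, the equivalence of supports for each individual $r$ makes the equality of primitive degree and least essentially-positive power a formality.
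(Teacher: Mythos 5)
Your proof is correct. Note that the paper itself gives no proof of this proposition---it is quoted verbatim from Shao \cite{Sh12} (Theorem 4.1)---so there is no in-paper argument to compare against; your two-step reduction (the $(m-1)$-th root in $T_\mathbb{A}$ preserves supports, and the support of $\mathbb{B}v$ depends only on $\mathbb{B}$ and the support of $v$, so associativity collapses $T_\mathbb{A}^r$ onto the single power $\mathbb{A}^r$) is essentially the zero-pattern argument Shao uses in the original source, carried out correctly, including the point that the equivalence holds for each individual $r$, which is what makes the least essentially-positive power equal to $\gamma(\mathbb{A})$.
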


The concept of the majorization matrix of a tensor introduced by Pearson is very useful.

\begin{definition}\label{defn21}{\rm (\cite{Pe10},  Definition 2.1)}
 The majorization matrix  $M(\mathbb{A})$ of the tensor $\mathbb{A}$ is defined as  $(M(\mathbb{A}))_{ij}=
a_{ij\ldots j}, i, j\in[n]$.
\end{definition}

By  Definition \ref{defn15}, Proposition \ref{pro16} and  Definition \ref{defn21},
 the following characterization of the primitive tensors was easily obtained.

\begin{proposition}\label{pro22}{\rm (\cite{YHY13}, Remark 2.6)}
Let $\mathbb{A}$ be a nonnegative  tensor with order $m$ and dimension $n$.
Then $\mathbb{A}$ is primitive if and only if there exists some positive integer $r$ such that $M(\mathbb{A}^r)>0.$
Furthermore, the smallest such $r$ is the primitive degree of $\mathbb{A}$, $\gamma(\mathbb{A})$.
\end{proposition}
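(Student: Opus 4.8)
The plan is to derive Proposition \ref{pro22} directly from the earlier characterizations by a short chain of equivalences, with the majorization matrix serving purely as a bookkeeping device that records the entries $a_{ij\cdots j}$ appearing in Proposition \ref{pro14}(i).

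First I would invoke Proposition \ref{pro16}, which already tells us that $\mathbb{A}$ is primitive if and only if $\mathbb{A}^r$ is essentially positive for some positive integer $r$, and that the least such $r$ equals $\gamma(\mathbb{A})$. Thus it suffices to show, for each fixed $r$, that $\mathbb{A}^r$ is essentially positive if and only if $M(\mathbb{A}^r)>0$. Once this equivalence is established for every $r$, the set of exponents making $\mathbb{A}^r$ essentially positive and the set making $M(\mathbb{A}^r)>0$ coincide, and hence so do their minima; this simultaneously yields the ``if and only if'' and the identification of the smallest $r$ with $\gamma(\mathbb{A})$.

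Next I would establish the fixed-$r$ equivalence. By Definition \ref{defn15} together with the equivalence of conditions (i) and (iii) in Proposition \ref{pro14}, the tensor $\mathbb{A}^r$ is essentially positive exactly when $(\mathbb{A}^r)_{ij\cdots j}>0$ for all $i,j\in[n]$. Here the index block $j\cdots j$ has length one less than the order of $\mathbb{A}^r$; although $\mathbb{A}^r$ has order $(m-1)^r+1$ rather than $m$, Proposition \ref{pro14} applies verbatim to any nonnegative tensor, so this step is unaffected by the growth in order. By Definition \ref{defn21}, these quantities are precisely the entries $\bigl(M(\mathbb{A}^r)\bigr)_{ij}$ of the majorization matrix, so the condition ``$(\mathbb{A}^r)_{ij\cdots j}>0$ for all $i,j$'' is literally the statement $M(\mathbb{A}^r)>0$.

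Combining these two observations completes the argument. I do not expect a genuine obstacle: the substantive content is already contained in Proposition \ref{pro16}, and the majorization matrix only repackages condition (i) of Proposition \ref{pro14} in matrix form. The single point that deserves a line of care is confirming that $M(\mathbb{A}^r)$ is the right object for the higher-order tensor $\mathbb{A}^r$, that is, that the entries named in Definition \ref{defn21} and the entries $a_{ij\cdots j}$ of Proposition \ref{pro14}(i) refer to the same numbers regardless of the tensor's order; but this is immediate from the two definitions.
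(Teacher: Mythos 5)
Your proposal is correct and follows essentially the same route as the paper, which obtains this proposition precisely by combining Definition \ref{defn15} (equivalently, conditions (i) and (iii) of Proposition \ref{pro14}), Shao's characterization in Proposition \ref{pro16}, and the definition of the majorization matrix in Definition \ref{defn21}. Your extra remark that Proposition \ref{pro14} applies verbatim to $\mathbb{A}^r$ despite its larger order $(m-1)^r+1$ is exactly the one point of care the paper leaves implicit.
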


On the primitive degree $\gamma(\mathbb{A})$, Shao proposed  the following conjecture for further research.%

\begin{conjecture} \label{con17}{\rm (\cite{Sh12},  Conjecture 1)}
When $m$ is fixed, then there exists some polynomial $f(n)$ on $n$ such that $\gamma(\mathbb{A})\le f(n)$ for all nonnegative primitive tensors of order $m$ and dimension $n$. \end{conjecture}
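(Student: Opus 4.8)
The plan is to resolve Conjecture \ref{con17} affirmatively by proving the sharp, $m$-independent bound $\gamma(\mathbb{A})\le (n-1)^2+1$ for every primitive tensor of order $m\ge 2$ and dimension $n$. First I would convert the analytic definition into combinatorics. By Proposition \ref{pro22}, $\gamma(\mathbb{A})$ is the least $r$ with $M(\mathbb{A}^r)>0$. Expanding $\mathbb{A}^r=\mathbb{A}(\mathbb{A}^{r-1})$ with the general product and reading off the zero pattern shows that the support of the $j$-th column of $M(\mathbb{A}^r)$ is exactly $F^r(\{j\})$, where $F:2^{[n]}\to 2^{[n]}$ is the reachability operator of the directed hypergraph of $\mathbb{A}$,
\[ F(S)=\{\,i\in[n]:\ a_{ii_2\cdots i_m}\neq 0 \text{ for some } i_2,\ldots,i_m\in S\,\}. \]
Hence $\gamma(\mathbb{A})=\min\{\,r:\ F^r(\{j\})=[n]\text{ for all }j\,\}$. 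Two features of $F$ drive everything: it is monotone, and $F(S)\supseteq\bigcup_{v\in S}C_v$, where $C_v=\{i:a_{iv\cdots v}\neq0\}$ is the out-neighbourhood of $v$ in the digraph $D_M$ of the majorization matrix. Monotonicity also gives a composition law, $i\in F^{a}(\{v\})$ and $v\in F^{b}(\{u\})\Rightarrow i\in F^{a+b}(\{u\})$, which is equivalent to the Boolean supermultiplicativity $Z(M(\mathbb{A}^{a+b}))\ge Z(M(\mathbb{A}^a))\,Z(M(\mathbb{A}^b))$.

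With this dictionary the argument splits. If $D_M$ happens to be primitive, then iterating $F\supseteq N_{D_M}$ yields $F^r(\{j\})\supseteq N_{D_M}^r(\{j\})=[n]$ as soon as $r\ge\gamma(D_M)$, so $\gamma(\mathbb{A})\le\gamma(D_M)\le (n-1)^2+1$ by Wielandt's classical theorem and we are done. The substance lies in the remaining case, where $D_M$ is imprimitive or even disconnected while $\mathbb{A}$ is still primitive; here I would run a Wielandt-type argument at the level of hyperwalks. The elementary engine is that the return-set $L_w=\{t\ge1:w\in F^t(\{w\})\}$ is closed under addition: if $w\in F^p(\{w\})$ then $\{w\}\subseteq F^p(\{w\})$, and applying the monotone map $F^q$ gives $w\in F^{p+q}(\{w\})$. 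Thus if $L_w$ contains two coprime integers it omits only finitely many values, the largest bounded by the Frobenius number; combined with the composition law this forces $F^\ell(\{u\})=[n]$ for all large $\ell$. A count mirroring Wielandt's then pins the threshold at $(n-1)^2+1$, while even crude estimates on the ingredients already produce a quadratic (at worst $O(n^3)$) polynomial, which is all that Conjecture \ref{con17} asks for.

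The main obstacle is precisely that one cannot short-circuit the second case through $D_M$: a hyperarc fires only when \emph{all} of $i_2,\ldots,i_m$ lie in $S$, and such firings create reachabilities entirely invisible to the majorization digraph, so a primitive $\mathbb{A}$ may have $D_M$ non-strongly-connected. Consequently the coprime closed hyperwalks through a common vertex $w$, and the hitting times into and out of $w$, must be extracted from the full hypergraph rather than from $D_M$; moreover the hyper-distance between two vertices can genuinely exceed $n-1$, so the naive "short path plus cycle" decomposition of Wielandt must be replaced by a direct analysis of how the reachable sets $F^t(\{j\})$ expand. Establishing that two coprime closed hyperwalks of length at most $n$ always exist, and that the relevant hitting times stay within the Wielandt budget, is the technical heart of the proof, and it is exactly there that the constant $(n-1)^2+1$, rather than a larger polynomial, is earned.

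Finally, to confirm that the bound is best possible — and hence that the exponent set is exactly $\{1,\ldots,(n-1)^2+1\}$ — I would exhibit primitive tensors realizing each value. A majorization-only tensor, whose nonzero entries are just the $a_{iv\cdots v}$ supported on the extremal Wielandt digraph, has $F(S)=\bigcup_{v\in S}C_v$, so $F$ coincides with ordinary matrix reachability and $\gamma(\mathbb{A})=\gamma(D_M)=(n-1)^2+1$; simple modifications of this construction realize every smaller value. In particular this settles Shao's Conjecture \ref{con17} affirmatively, uniformly in $m$, with the quadratic polynomial $f(n)=(n-1)^2+1$.
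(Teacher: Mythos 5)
Your reduction to the set-valued reachability operator $F$ is sound and in fact coincides with the machinery this paper uses: $F^k(\{j\})$ is exactly the set $S_k(\mathbb{A},j)$ of (\ref{eq21})--(\ref{eq22}), and your identity $\gamma(\mathbb{A})=\min\{r : F^r(\{j\})=[n] \mbox{ for all } j\}$ is Propositions \ref{pro22}, \ref{pro24} and \ref{pro28}(iii). Your first case is also fine: when $M(\mathbb{A})$ is primitive, monotonicity gives $\gamma(\mathbb{A})\le\gamma(M(\mathbb{A}))\le (n-1)^2+1$ by Wielandt, a quantitative form of Proposition \ref{pro26}. But the conjecture lives entirely in your second case, and there what you have written is a plan, not a proof. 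The two claims that carry all the weight --- that a primitive tensor admits closed hyperwalks of coprime lengths \emph{at most $n$} through a common vertex, and that the hitting times into and out of that vertex stay within the Wielandt budget --- are announced as ``the technical heart'' and never established. They are not routine, and you yourself identify the reason: a hyperarc fires only when all of $i_2,\dots,i_m$ lie in the current reachable set, so a closed hyperwalk cannot be shortened by excising cycles as in a digraph. Your return-set semigroup $L_w$ is indeed closed under addition (that argument is correct), and primitivity does give $\gcd(L_w)=1$ (since $S_r(\mathbb{A},w)=S_{r+1}(\mathbb{A},w)=[n]$ for large $r$), but nothing you prove bounds the generators of $L_w$ by $n$, and without such a bound no Wielandt-type count can even start. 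The fallback assertion that ``crude estimates already produce a quadratic (at worst $O(n^3)$) polynomial'' is likewise unsupported: the only bound that follows from what you actually establish (monotonicity plus distinctness of the sets $F^t(\{j\})$ before stabilization) is the exponential bound $2^n-1$ --- precisely the counting this paper carries out in Proposition \ref{pro48} in the non-primitive setting. Closing the gap from $2^n-1$ to a polynomial \emph{is} the conjecture.

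For calibration: this paper does not reprove the conjecture; it invokes Theorem \ref{thm18}, whose proof in \cite{YHY13} proceeds by a direct structural analysis of the sequences $S_k(\mathbb{A},j)$ for primitive tensors, not by an explicit analogue of the cycle decomposition, so your proposed route is genuinely different from the known one and its central lemma would need independent justification (it may well be false as stated, since set-valued reachability need not produce short return times). Finally, your closing claim that ``simple modifications'' of the extremal construction realize every value below $(n-1)^2+1$ is far from simple: that statement is the main theorem of the present paper (Theorems \ref{thm33} and \ref{thm34}, via the carefully designed tensors $\mathbb{A}_k$ and Propositions \ref{pro31}--\ref{pro32}), and its matrix analogue is false because of the Dulmage--Mendelsohn gaps, so it cannot be waved through.
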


In the case of $m = 2$ ($\mathbb{A}$ is a matrix), the well-known Wielandt's upper bound tells us that we can take
$f(n) =(n-1)^2 + 1.$

Recently,  the authors \cite{YHY13} confirmed Conjecture \ref{con17} by proving Theorem \ref{thm18}.

\begin{theorem}\label{thm18}{\rm (\cite{YHY13}, Theorem 1.2)}
Let $\mathbb{A}$ be a nonnegative primitive tensor with order $m$ and dimension $n$.
Then its primitive degree $\gamma(\mathbb{A})\le (n-1)^2+1$, and the upper bound is tight.
\end{theorem}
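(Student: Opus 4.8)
The plan is to translate the statement, via Proposition \ref{pro22}, into a purely combinatorial question about the growth of supports under $\mathbb{A}$, and then to run a Wielandt-type argument on that combinatorial object. Write $e_j$ for the $j$-th standard basis vector and, for a nonempty $S\subseteq[n]$, set
\[
F(S)=\{\,i\in[n]\;:\;a_{ii_2\cdots i_m}>0\ \text{for some}\ i_2,\dots,i_m\in S\,\}.
\]
The first thing I would record is the identity $\operatorname{supp}(\mathbb{A}^r e_j)=F^{r}(\{j\})$, proved by induction on $r$ using associativity of the general product (Definition \ref{defn12}): since $\mathbb{A}^{r}e_j=\mathbb{A}(\mathbb{A}^{r-1}e_j)$ and all entries are nonnegative, no cancellation occurs, and $\operatorname{supp}(\mathbb{A} w)=F(\operatorname{supp} w)$ for every $w\ge 0$. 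Because $(\mathbb{A}^r e_j)_i=(M(\mathbb{A}^r))_{ij}$, this says that the support of the $j$-th column of $M(\mathbb{A}^r)$ is exactly $F^{r}(\{j\})$. By Proposition \ref{pro22} we thus obtain the clean reformulation
\[
\gamma(\mathbb{A})=\min\{\,r\ge 1:\ F^{r}(\{j\})=[n]\ \text{for all}\ j\in[n]\,\},
\]
and the task becomes bounding the transient of the monotone set-map $F$.

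Two structural facts drive the argument. First, $F$ is monotone and $F(\{j\})$ equals the support of the $j$-th column of $M(\mathbb{A})$; singletons only ``see'' the majorization matrix, because $i_2=\cdots=i_m=j$ is the only tail contained in $\{j\}$. Second, monotonicity yields the supermultiplicativity $F^{r+s}(\{j\})\supseteq\bigcup_{k\in F^{s}(\{j\})}F^{r}(\{k\})$; that is, in the Boolean sense the $0$--$1$ pattern of $M(\mathbb{A}^{r+s})$ dominates the Boolean product of those of $M(\mathbb{A}^{r})$ and $M(\mathbb{A}^{s})$, so in particular $Z(M(\mathbb{A}^{r}))$ dominates $Z(M(\mathbb{A}))^{r}$. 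This immediately settles the \emph{easy case}: if $M(\mathbb{A})$ happens to be a primitive matrix, then $M(\mathbb{A}^{r})>0$ as soon as $M(\mathbb{A})^{r}>0$, whence $\gamma(\mathbb{A})\le\gamma(M(\mathbb{A}))\le(n-1)^2+1$ by Wielandt's classical bound.

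The main obstacle is that $M(\mathbb{A})$ need not be primitive even when $\mathbb{A}$ is: a nonzero entry $a_{ii_2\cdots i_m}$ whose tail indices are not all equal contributes to $F(S)$ only once $S$ already contains all of $i_2,\dots,i_m$, so such ``hyperedges'' are invisible to $M(\mathbb{A})$ yet can be indispensable for primitivity. Hence $F$ is genuinely a nonlinear (AND--OR) reachability operator, and one cannot simply invoke Wielandt for a single matrix. Here I would adapt Wielandt's proof directly to $F$: track a trajectory $S_r=F^{r}(\{j\})$ up to the first time it reaches $[n]$, show (from primitivity) that the ``return lengths'' $\{p:\ j\in F^{p}(\{j\})\}$ have greatest common divisor $1$, and combine this with a strong-connectivity--type estimate giving reachability within $n-1$ steps once a set is large enough, invoking the Frobenius (Chicken McNugget) bound to control how long the trajectory can wander before exploding to $[n]$, optimizing the lengths to the extremal value $(n-1)^2+1$. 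The delicate point---and the real content of the theorem---is that the AND-semantics of hyperedges replaces the single closed walks of the matrix proof by AND--OR derivations, so the cycle bookkeeping must be redone in that setting; equivalently, one must show that hyperedge-assisted primitivity is never slower than the worst matrix case.

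Finally, for tightness I would exhibit a tensor attaining the bound by pushing the easy case to its extreme. Let $W$ be the $n\times n$ Wielandt matrix (the Hamiltonian $n$-cycle together with the chord producing a cycle of length $n-1$), whose exponent is $(n-1)^2+1$, and define an order-$m$ dimension-$n$ tensor $\mathbb{A}$ by $a_{ij\cdots j}=w_{ij}$ for $i,j\in[n]$ and all remaining entries $0$. For such a ``majorization-supported'' tensor only tails with all indices equal are active, so $F(S)=\{i:\ w_{ij}>0\ \text{for some}\ j\in S\}$ is exactly the one-step image under the digraph of $W$; consequently $F^{r}(\{j\})=\{i:(W^{r})_{ij}>0\}$ and $\gamma(\mathbb{A})=\gamma(W)=(n-1)^2+1$, as required.
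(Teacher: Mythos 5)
Your concrete steps are all correct, and your framework is exactly the one this paper works with: your $F^{r}(\{j\})$ is the set $S_r(\mathbb{A},j)$ of (\ref{eq21}), your support identity is the recurrence (\ref{eq22}), your ``easy case'' is Proposition \ref{pro26}, and your tightness example is precisely the tensor $\mathbb{A}_0$ of Section 2 (all entries zero except $a_{ij\cdots j}=w_{ij}$, with $W=M_1$ the Wielandt matrix), which the paper disposes of via Propositions \ref{pro210} and \ref{pro27}. Note that the present paper never proves Theorem \ref{thm18} at all --- it is quoted from \cite{YHY13} --- so the only question is whether your argument stands on its own. It does not: the upper bound in the case where $M(\mathbb{A})$ is not primitive, which is the entire content of the theorem, is replaced by a plan (``adapt Wielandt's proof directly to $F$'', ``the cycle bookkeeping must be redone''), and you flag it yourself as ``the real content of the theorem''.

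That plan has two concrete holes. First, the gcd/Frobenius route is circular: for the operator $F$ the only return lengths available for free are $\gamma_j(\mathbb{A})$ and $\gamma_j(\mathbb{A})+1$ (since $F$ fixes $[n]$ when $\mathbb{A}$ is primitive), and the Frobenius bound for two such coprime lengths is of order $\gamma_j(\mathbb{A})^2$, so it cannot bound $\gamma_j(\mathbb{A})$ without an independent supply of \emph{short} return lengths; in the matrix case these come from pigeonhole on closed walks, but a witness of $j\in F^{K}(\{j\})$ is an AND--OR derivation tree, not a walk, and no pigeonhole argument yields a return of length at most $n-1$. Second, ``reachability within $n-1$ steps once a set is large enough'' has no tensor analogue: Proposition \ref{pro48} of this very paper exhibits tensors that are $j$-primitive but not primitive whose trajectory $F^{k}(\{j\})$ passes through at least $\binom{n-1}{\lfloor (n-1)/2\rfloor}$ pairwise distinct subsets before reaching $[n]$, so the transient of $F$ really can be exponential, and any correct proof must use primitivity at \emph{every} index rather than aperiodicity at one $j$. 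What a complete argument needs --- and the kind of mechanism the proof in \cite{YHY13} is built on (compare Proposition \ref{pro47}, quoted from there) --- is, for instance: every column of $M(\mathbb{A})$ has an off-diagonal nonzero entry, which forces a cycle of length $s\le n$ in the digraph of $M(\mathbb{A})$; any vertex $v$ on that cycle satisfies $v\in S_s(\mathbb{A},v)$, monotonicity of $F$ then gives the nested chain $S_s(\mathbb{A},v)\subseteq S_{2s}(\mathbb{A},v)\subseteq\cdots$, strictly increasing until it equals $[n]$; and one must then carry out the delicate counting (including the cost of moving from an arbitrary $j$ into such a cycle) that yields $(n-1)^2+1$ rather than, say, $n^2$. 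None of that bookkeeping appears in your proposal, so as written it proves the bound only when $M(\mathbb{A})$ is primitive.
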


In fact, Theorem \ref{thm18} not only confirmed the existence of $f(n)$,
but also showed that $f(n)$ is a quadratic function of $n$,
 independent of $m$,
furthermore the expression of $f(n)$ with the tensor case is the same with the matrix case.

Let  $m, n$ be positive integers with $m\ge 2, n\ge 2$, and the exponent set of  primitive tensors with order $m$ and dimension $n$ be
$$E(m,n)=\{k |\mbox { there exists a primitive tensor } \mathbb{A} \mbox { of order } m \mbox{  dimension } n
              \mbox { such that }k=\gamma(\mathbb{A}) \}.$$
            Now  it is natural to consider the question to completely determine $E(m,n)$.

 By Theorem \ref{thm18}, we know $E(m,n)\subseteq [(n-1)^2+1]$.
Clearly, $E(2,n)=E_n$,
where $E_n=\{k | \mbox { there exists a   primitive matrix }  A \mbox { of order } n \mbox { such that } $ $\gamma(A)=k\}.$

Let $A=(a_{ij})$ be a nonnegative primitive matrix of order $n$.   In 1950, H. Wielandt \cite{Wi59} first stated the sharp upper bound for
$\gamma(A)$, that is, $\gamma(A)\le w_n = (n-1)^2+1$ for all  primitive matrices  of order $n$ and thus $E_n\subseteq [1,w_n]^o$,
where  $a,b$ are positive integers with $b>a$ and $[a,b]^o=\{k | k \mbox { is an integer and } a\leq k\leq b\}$.
In 1964, A. L. Dulmage and N. S. Mendelsohn \cite{DM64}  revealed the existence of the so-called gaps in
the exponent set of  primitive matrices, that is, $E_n \subset [1,w_n]^o$,
where  ``gap" is a set  of consecutive integers $[a,b]^o(\subset [1,w_n]^o)$,  such that no   matrix $A$ of order $n$ satisfying $\gamma(A)\in [a,b]^o$.
In 1981, M. Lewin and Y. Vitek \cite{LV81}  found all gaps in $[\lfloor\frac{1}{2}w_n\rfloor+1, w_n]^o$,
and  conjectured that $[1, \lfloor\frac{1}{2}w_n\rfloor]^o$ has no gaps,
where $\lfloor x\rfloor$ denotes the greatest integer $\le x$.
 In 1985, Shao  \cite{Sh85} proved that this Lewin-Vitek Conjecture is true for all sufficiently large $n$,
and the conjecture has one counterexample when $n=11$ since $48\not\in E_{11}$.
Finally, in 1987, Zhang \cite{Zh87} continued and completed the work. He  showed that the  Lewin-Vitek Conjecture holds for all $n$
 except $n=11$. Thus the  exponent set $E_n$ for primitive matrices of order $n$ is completely determined.

For general tensors (the case of $m\geq 3$),
in \cite{HYY14}, the authors  showed that there are no gaps in tensor case when $m\ge n\ge3$.
but the tensor cases  $n>m\ge 3$ and $m>n=2$ are still open.

In this paper, we  show that there are no gaps  in tensor case $m\ge 3$   in Section 3,
that is, $E(m,n)= [(n-1)^2+1]=\{1,2,\ldots, (n-1)^2+1\}$ when $m\geq 3$,
and  propose some open problems for further  research in Section 4.

\section{Preliminaries}
\hskip.6cm  For proving Conjecture \ref{con17}, the authors \cite{YHY13} defined $j$-primitive and $j$-primitive degree for a nonnegative tensor and obtained the following result.

\begin{definition}\label{defn23}{\rm (\cite{YHY13}, Definition 2.13)}
 Let $\mathbb{A}$ be a nonnegative  tensor with order $m$ and dimension $n$. For a fixed integer $j\in[n]$, if there exists a positive integer $k$ such that
$$(M(\mathbb{A}^k))_{uj}>0, \hskip.2cm {\mbox for\,\, all } \, u \in[n],$$
then $\mathbb{A}$ is called $j$-primitive and the smallest such integer $k$ is called the $j$-primitive degree of $\mathbb{A}$, denoted by $\gamma_j(\mathbb{A})$.
\end{definition}

\begin{proposition}\label{pro24}{\rm (\cite{YHY13}, Proposition 2.14)}
Let $\mathbb{A}$ be a nonnegative primitive  tensor with order $m$ and dimension $n$. Then
$\gamma(\mathbb{A})=\max\limits_{1\le j\le n}\{\gamma_j(\mathbb{A})\}.$
\end{proposition}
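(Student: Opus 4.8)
The plan is to reduce the identity to two inequalities and to isolate a monotonicity property of the columns of the majorization matrices $M(\mathbb{A}^{k})$. First I would record the basic recursion relating consecutive powers. Writing $\mathbb{A}^{k+1}=\mathbb{A}\mathbb{A}^{k}$ and expanding the general product of Definition \ref{defn12}, the majorization entry (obtained by setting every free index equal to $j$, as in Definition \ref{defn21}) satisfies
\[
(M(\mathbb{A}^{k+1}))_{ij}=\sum_{i_2,\ldots,i_m=1}^{n} a_{ii_2\ldots i_m}\prod_{s=2}^{m}(M(\mathbb{A}^{k}))_{i_s j},\qquad i,j\in[n].
\]
Since $\mathbb{A}$ and each power $\mathbb{A}^{k}$ are nonnegative, every summand is nonnegative, so $(M(\mathbb{A}^{k+1}))_{ij}>0$ holds precisely when some tuple $(i_2,\ldots,i_m)$ makes $a_{ii_2\ldots i_m}>0$ while all factors $(M(\mathbb{A}^{k}))_{i_s j}$ are positive.

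The key step is a monotonicity lemma: for a primitive $\mathbb{A}$, if the $j$-th column of $M(\mathbb{A}^{k})$ is positive, then so is the $j$-th column of $M(\mathbb{A}^{k'})$ for every $k'\ge k$. To prove it I first observe that primitivity forbids a \emph{zero row}: if $a_{ii_2\ldots i_m}=0$ for all $i_2,\ldots,i_m$, then the displayed recursion forces row $i$ of $M(\mathbb{A}^{k})$ to vanish for every $k$, contradicting the existence (guaranteed by Proposition \ref{pro22}) of some $r$ with $M(\mathbb{A}^{r})>0$. Hence each $i$ admits a tuple with $a_{ii_2\ldots i_m}>0$; if the whole $j$-th column of $M(\mathbb{A}^{k})$ is positive, that tuple makes the corresponding summand positive, so $(M(\mathbb{A}^{k+1}))_{ij}>0$ for every $i$, and induction on $k'$ completes the claim.

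With monotonicity in hand the proposition is immediate. Set $\gamma=\max_{1\le j\le n}\gamma_j(\mathbb{A})$, which is finite because $M(\mathbb{A}^{r})>0$ for some $r$, so each $\gamma_j(\mathbb{A})$ is defined. For the upper bound, for each $j$ the $j$-th column of $M(\mathbb{A}^{\gamma_j(\mathbb{A})})$ is positive by Definition \ref{defn23}, hence by monotonicity the $j$-th column of $M(\mathbb{A}^{\gamma})$ is positive; as this holds for all $j$, we get $M(\mathbb{A}^{\gamma})>0$, whence $\gamma(\mathbb{A})\le\gamma$ by Proposition \ref{pro22}. For the reverse inequality, $M(\mathbb{A}^{\gamma(\mathbb{A})})>0$ shows every column of $M(\mathbb{A}^{\gamma(\mathbb{A})})$ is positive, so $\gamma_j(\mathbb{A})\le\gamma(\mathbb{A})$ for each $j$ and therefore $\gamma\le\gamma(\mathbb{A})$. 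Combining the two bounds gives $\gamma(\mathbb{A})=\gamma$.

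The main obstacle is the monotonicity lemma, and within it the observation that primitivity rules out zero rows so that column-positivity propagates through the recursion; the two inequalities are then routine once the recursion and monotonicity are available. I would also double-check that the recursion is stated for the correct order of multiplication ($\mathbb{A}\mathbb{A}^{k}$ versus $\mathbb{A}^{k}\mathbb{A}$) and that the associative law from \cite{Sh12} legitimizes writing $\mathbb{A}^{k+1}$ unambiguously.
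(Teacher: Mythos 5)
Your proof is correct: the recursion you write for $(M(\mathbb{A}^{k+1}))_{ij}$ is exactly the one the paper records before equation (2.2), your monotonicity lemma (column positivity propagates because a primitive tensor has no zero slice $a_{i\cdot\ldots\cdot}$) is precisely the content of Proposition \ref{pro28}(ii)--(iii) phrased in terms of the sets $S_k(\mathbb{A},j)$, and the two inequalities then follow as you say. Note that the paper itself does not reprove this proposition (it is quoted from \cite{YHY13}), but your argument is essentially the same approach as the $S_k$-machinery the paper sets up, so there is nothing to add.
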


Let $\mathbb{A}$ be a nonnegative tensor with order $m$ and dimension $n$. For positive integers $k$ and $j\in[n]$, a notation
$S_k(\mathbb{A}, j)$ was introduced in \cite{HYY14} as follows:
\begin{equation}\label{eq21}
S_k(\mathbb{A}, j)=\{u\in[n]\hskip.08cm |\hskip.08cm (M(\mathbb{A}^k))_{uj}>0 \}, \hskip.2cm k=1, 2, \ldots.
\end{equation}
 and by equation
\begin{equation*}
 (M(\mathbb{A}^{k+1}))_{uj}=\sum\limits_{i_2,\ldots, i_m=1}^{n}a_{ui_2\cdots i_m}(M(\mathbb{A}^k))_{i_2j}\cdots(M(\mathbb{A}^k))_{i_mj},
\end{equation*}
which investigated in \cite{YHY13}, the following recurrence relation (\ref{eq22}) was obtained as follows:

\begin{equation}\label{eq22}
S_{k+1}(\mathbb{A}, j)=\{u\in[n]\hskip.08cm|\hskip.08cm\mbox {there exist }  i_2, \ldots, i_m\in S_k(\mathbb{A}, j) \mbox{ and } a_{ui_2\cdots i_m}>0\}.
\end{equation}

According to this relation (\ref{eq22}), some good properties were obtained.
\begin{proposition}\label{pro28}{\rm (\cite{HYY14}, Lemma 3.1 and Remark 3.2)}
Let $\mathbb{A}$ be a nonnegative  tensor with order $m$ and dimension $n$.

{\rm (i) } Let $k, l, i, j$ be positive integers such that $1\le i, j\le n$. Suppose that $S_k(\mathbb{A}, i)=S_l(\mathbb{A}, j)$, then
$S_{k+r}(\mathbb{A}, i)=S_{l+r}(\mathbb{A}, j)$ holds for every positive integer $r$.

{\rm (ii) } For any $j\in[n]$, let $k$ be the least positive integer such that $S_k(\mathbb{A}, j)=[n]$.
Then for any integer $l\geq k$,  $S_l(\mathbb{A}, j)=[n]$.

{\rm (iii). }For any $j\in[n]$, $\gamma_j(\mathbb{A})$ is the least positive integer $k$ satisfying $S_k(\mathbb{A}, j)=[n]$.
\end{proposition}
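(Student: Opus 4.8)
The plan is to read the recurrence (\ref{eq22}) as the iteration of a single set-valued operator that depends on neither $j$ nor $k$. Concretely, for a subset $S\subseteq[n]$ put
$$\Phi(S)=\{u\in[n]\mid \text{there exist } i_2,\ldots,i_m\in S \text{ with } a_{ui_2\cdots i_m}>0\},$$
so that (\ref{eq22}) becomes the clean statement $S_{k+1}(\mathbb{A},j)=\Phi\big(S_k(\mathbb{A},j)\big)$ for every $j\in[n]$ and every $k\ge1$. Since the only way $j$ and $k$ enter the right-hand side is through the set $S_k(\mathbb{A},j)$ itself, two $S$-sets that happen to coincide will remain coincident under further iteration; this is exactly what part (i) asserts. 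I would also record the obvious monotonicity $S\subseteq T\Rightarrow\Phi(S)\subseteq\Phi(T)$, immediate from the definition.

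For part (i) I would argue by induction on $r$. The base case $r=1$ is $S_{k+1}(\mathbb{A},i)=\Phi(S_k(\mathbb{A},i))=\Phi(S_l(\mathbb{A},j))=S_{l+1}(\mathbb{A},j)$, using the hypothesis $S_k(\mathbb{A},i)=S_l(\mathbb{A},j)$. For the inductive step, applying $\Phi$ once more to the equal sets $S_{k+r}(\mathbb{A},i)=S_{l+r}(\mathbb{A},j)$ yields $S_{k+r+1}(\mathbb{A},i)=S_{l+r+1}(\mathbb{A},j)$. No further input is needed.

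Part (ii) is where the one genuine point lies, and it is the step I expect to require the most care. Everything reduces to showing $\Phi([n])=[n]$: once this is known, $S_{k+1}(\mathbb{A},j)=\Phi(S_k(\mathbb{A},j))=\Phi([n])=[n]$, and a trivial induction on $l\ge k$ finishes. Now $\Phi([n])=[n]$ says precisely that no row of $\mathbb{A}$ is identically zero, i.e. for each $u$ some entry $a_{ui_2\cdots i_m}$ is positive. I would extract this from the hypothesis itself: if some row $u_0$ were identically zero, then $u_0$ could never belong to any $S_t(\mathbb{A},j)$ --- for $t=1$ because $(M(\mathbb{A}))_{u_0j}=a_{u_0j\cdots j}=0$, and for $t\ge2$ because membership via (\ref{eq22}) forces a positive entry $a_{u_0 i_2\cdots i_m}$ --- contradicting $S_k(\mathbb{A},j)=[n]$. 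Hence every row is nonzero and $\Phi([n])=[n]$, as required.

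Finally, part (iii) is essentially a translation of Definition \ref{defn23} through the notation (\ref{eq21}). By (\ref{eq21}), the condition $S_k(\mathbb{A},j)=[n]$ is literally the same as $(M(\mathbb{A}^k))_{uj}>0$ for all $u\in[n]$, which is the defining condition for $\gamma_j(\mathbb{A})$; taking the least such $k$ in both formulations identifies $\gamma_j(\mathbb{A})$ with the least $k$ satisfying $S_k(\mathbb{A},j)=[n]$. Part (ii) then guarantees that this least index is well defined and that $S_l(\mathbb{A},j)=[n]$ persists for all larger $l$, so no ambiguity arises.
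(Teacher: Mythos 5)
Your proof is correct and complete. The paper itself gives no proof of this proposition (it is imported from \cite{HYY14}, Lemma 3.1 and Remark 3.2), but your argument is the natural one built directly on the recurrence (\ref{eq22}): reading (\ref{eq22}) as iteration of the $j$- and $k$-independent operator $\Phi$ makes (i) an immediate induction, and (iii) is indeed just Definition \ref{defn23} rewritten through (\ref{eq21}). The one step needing genuine care in (ii), namely $\Phi([n])=[n]$, you justify correctly: an index $u_0$ whose slice $a_{u_0i_2\cdots i_m}$ vanishes identically can never lie in any $S_t(\mathbb{A},j)$, so the hypothesis $S_k(\mathbb{A},j)=[n]$ already forces every slice to be nonzero, and the persistence $S_l(\mathbb{A},j)=[n]$ for $l\ge k$ follows.
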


\begin{remark}\label{rem211}
For convenience, we replace $S_t(\mathbb{A},n)$ by the notation $S_t(\mathbb{A},0)$, and  replace $\gamma_n(\mathbb{A})$ by $\gamma_0(\mathbb{A})$, respectively. Under these notations, we have $\gamma(\mathbb{A})=\max\limits_{0\le j\le n-1}\{\gamma_j(\mathbb{A})\}$ by Proposition \ref{pro24}.
\end{remark}

In \cite{YHY13}, the authors introduce some theoretical concepts of digraphs and matrices.

Let $D=(V,E)$ denote  a digraph on $n$ vertices with vertex set $V(D)=V$ and arc set $E(D)=E$. Loops are
permitted, but no multiple arcs. A $u\rightarrow v$ walk in
$D$ is a sequence of vertices $u, u_1,\ldots, u_k=v$ and a
sequence of arcs $e_1=(u,u_1),e_2=(u_1,u_2), \ldots,
e_k=(u_{k-1},v)$, where the vertices and the arcs are not
necessarily distinct. We use the notation
$u\rightarrow u_1\rightarrow u_2\rightarrow \cdots \rightarrow u_{k-1}\rightarrow v$
to refer to this  $u\rightarrow v$ walk.
A closed walk is a $u\rightarrow v$ walk where $u=v$. A path is a walk with distinct vertices. A
 cycle is a closed $u\rightarrow v$ walk with distinct
vertices except for $u=v$. The length of a walk $W$ is the
number of arcs in $W$, denoted by $l(W)$. 

\begin{definition}\label{defn25}{\rm (\cite{YHY13}, Definition 2.9)}
Let $D=(V,E)$ denote  a digraph on $n$ vertices.
A digraph $D^{\prime}=(V, E^{\prime})$ is called the reversed digraph of $D$
where $(j,i)\in E^{\prime}$ if and only if $(i,j)\in E$ for any $i,j\in V$, denoted by $\overleftarrow{D}$.
\end{definition}

Let $A=(a_{ij})$ be a square nonnegative matrix of order $n$.
The associated digraph $D(A)=(V, E)$ of $A$ (possibly with loops) is defined to be the digraph with vertex set
$V=\{1,2,\ldots,n\}$ and arc set $E=\{(i,j)|a_{ij}\neq 0\}$.
The associated reversed digraph $\overleftarrow{D(A)}=(V, E^{\prime})$ of $A$ (possibly with loops) is defined to be the digraph with vertex set
$V=\{1,2,\ldots,n\}$ and arc set $E^{\prime}=\{(j,i)|a_{ij}\neq 0\}$.
Clearly, the associated reversed digraph of $A$ is the reversed digraph of the associated digraph of $A$.





\begin{proposition}\label{pro29}{\rm (\cite{HYY14}, Proposition 4.1)}
Let $A$ be a nonnegative matrix of order $n$,  $j(\in [n]), k$ be positive integers,
$S_k(A,j)=\{u\in [n]  \hskip0.08cm|\hskip0.08cm (A^k)_{uj}>0\}$.
Then $$S_k(A,j)=\{u\in [n] \hskip0.08cm|\hskip0.08cm \mbox{there exists a walk of length $k$ from $j$ to $u$ in the digraph } \overleftarrow{D(A)}\}.$$
\end{proposition}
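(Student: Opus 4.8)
The plan is to prove the set equality by unwinding the definition of the $k$-th matrix power and then invoking the arc-reversal convention built into $\overleftarrow{D(A)}$. First I would expand the $(u,j)$ entry of $A^k$ as
\[
(A^k)_{uj}=\sum_{i_1,\ldots,i_{k-1}=1}^{n} a_{ui_1}a_{i_1i_2}\cdots a_{i_{k-1}j},
\]
where the sum runs over all sequences of intermediate indices. Since $A$ is nonnegative, every summand is nonnegative, so $(A^k)_{uj}>0$ holds if and only if at least one summand is positive, i.e.\ if and only if there exist indices $i_1,\ldots,i_{k-1}\in[n]$ with $a_{ui_1}>0,\ a_{i_1i_2}>0,\ \ldots,\ a_{i_{k-1}j}>0$.

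Next I would translate each positivity condition into an arc statement. By definition of $D(A)$, the condition $a_{pq}>0$ is exactly the statement that $(p,q)$ is an arc of $D(A)$; hence the chain of positive entries above is equivalent to the existence of a walk
\[
u\rightarrow i_1\rightarrow i_2\rightarrow\cdots\rightarrow i_{k-1}\rightarrow j
\]
of length $k$ from $u$ to $j$ in $D(A)$. Thus $u\in S_k(A,j)$ if and only if there is a walk of length $k$ from $u$ to $j$ in $D(A)$.

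Finally I would apply the reversal. By Definition~\ref{defn25}, $\overleftarrow{D(A)}$ is obtained from $D(A)$ by reversing every arc, so reading the above walk backwards gives the walk
\[
j\rightarrow i_{k-1}\rightarrow\cdots\rightarrow i_1\rightarrow u,
\]
which is a walk of length $k$ from $j$ to $u$ in $\overleftarrow{D(A)}$; this passage from a walk to its reverse is a bijection, so the existence of one is equivalent to the existence of the other. Combining the three steps yields exactly the claimed description of $S_k(A,j)$.

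This argument is essentially bookkeeping, so I do not expect a serious obstacle; the only point demanding care is keeping the index order consistent---the entry $(A^k)_{uj}$ carries row index $u$ and column index $j$, which under the arc-reversal convention of $\overleftarrow{D(A)}$ forces the walk to run from $j$ to $u$ rather than from $u$ to $j$. An alternative, equally short route is induction on $k$: the base case $k=1$ is immediate from the definition of $S_1(A,j)$ and of the arcs of $\overleftarrow{D(A)}$, and the inductive step follows from the identity $(A^{k+1})_{uj}=\sum_{v}a_{uv}(A^k)_{vj}$ together with the same recurrence-style reasoning already used in the tensor setting.
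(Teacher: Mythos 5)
Your proof is correct: the expansion of $(A^k)_{uj}$ into products of entries, the translation of positive entries into arcs of $D(A)$, and the bijection between a walk and its reverse in $\overleftarrow{D(A)}$ together give exactly the stated equality, and your care with the index order (row $u$, column $j$ forcing the walk to run from $j$ to $u$ after reversal) is precisely the one point where one could go wrong. Note that the paper itself states this proposition without proof, citing it from the reference [HYY14], so there is no in-paper argument to compare against; your argument is the standard one that such a proof would take.
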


\begin{proposition}\label{pro210}{\rm (\cite{HYY14}, Lemma 4.2)}
Let $\mathbb{A}$ be a nonnegative   tensor with order $m$ and dimension $n$ such that
  $a_{ii_2\cdots i_m}=0$ if $i_2\cdots i_m\not=i_2\cdots i_2$  for any $i\in [n]$.
  Then for any positive integers $j(\in [n])$ and $k$,
  \begin{equation}\label{eq23}
  S_k(\mathbb{A}, j)=S_k(M(\mathbb{A}),j).
  \end{equation}
\end{proposition}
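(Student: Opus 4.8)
The plan is to prove the identity by induction on $k$, by showing that the two families of sets $S_k(\mathbb{A},j)$ and $S_k(M(\mathbb{A}),j)$ obey the same recurrence with the same initial term. The structural hypothesis on $\mathbb{A}$ is precisely what forces the tensor recurrence to degenerate into the ordinary matrix reachability recurrence, so once the base case and the recurrences are aligned, the conclusion is immediate.

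First I would check the base case $k=1$. By the definition in (\ref{eq21}) together with Definition \ref{defn21}, we have $S_1(\mathbb{A},j)=\{u\in[n]\,|\,(M(\mathbb{A}))_{uj}>0\}=\{u\in[n]\,|\,a_{uj\cdots j}>0\}$, while for the matrix $M(\mathbb{A})$ the definition of $S_k$ for a matrix used in Proposition \ref{pro29} gives $S_1(M(\mathbb{A}),j)=\{u\in[n]\,|\,(M(\mathbb{A}))_{uj}>0\}$. Hence the two sets coincide when $k=1$.

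Next comes the inductive step, which is where the hypothesis on $\mathbb{A}$ does all the work. Suppose $S_k(\mathbb{A},j)=S_k(M(\mathbb{A}),j)$. On the tensor side, the recurrence (\ref{eq22}) says $u\in S_{k+1}(\mathbb{A},j)$ iff there exist $i_2,\ldots,i_m\in S_k(\mathbb{A},j)$ with $a_{ui_2\cdots i_m}>0$. By the hypothesis that $a_{ii_2\cdots i_m}=0$ whenever $i_2\cdots i_m\not=i_2\cdots i_2$, any nonzero entry $a_{ui_2\cdots i_m}>0$ forces $i_2=i_3=\cdots=i_m$; writing this common index as $v$, the condition becomes: there exists $v\in S_k(\mathbb{A},j)$ with $a_{uv\cdots v}=(M(\mathbb{A}))_{uv}>0$. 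Thus the $(m-1)$-fold search in (\ref{eq22}) collapses to the single-index condition ``$\exists\,v\in S_k(\mathbb{A},j)$ with $(M(\mathbb{A}))_{uv}>0$''. On the matrix side, from the entrywise identity $((M(\mathbb{A}))^{k+1})_{uj}=\sum_{v}(M(\mathbb{A}))_{uv}((M(\mathbb{A}))^{k})_{vj}$ and the nonnegativity of all entries, $u\in S_{k+1}(M(\mathbb{A}),j)$ iff there exists $v$ with $(M(\mathbb{A}))_{uv}>0$ and $v\in S_k(M(\mathbb{A}),j)$. Invoking the inductive hypothesis $S_k(\mathbb{A},j)=S_k(M(\mathbb{A}),j)$, the two descriptions of membership in $S_{k+1}$ are identical, so $S_{k+1}(\mathbb{A},j)=S_{k+1}(M(\mathbb{A}),j)$, completing the induction.

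I expect the only point demanding care to be the reduction in the inductive step: one must observe that the defining condition $a_{ui_2\cdots i_m}>0$ already encodes the equality $i_2=\cdots=i_m$, so that requiring all of $i_2,\ldots,i_m$ to lie in $S_k(\mathbb{A},j)$ is equivalent to requiring the single vertex $v$ to lie there. Once this collapse is made explicit, the tensor recurrence (\ref{eq22}) becomes formally the same reachability recurrence as the ordinary matrix case, and the induction closes at once. No genuine obstacle arises beyond this bookkeeping, since the structural assumption is exactly what identifies $\mathbb{A}$ with its majorization matrix as far as the sets $S_k$ are concerned.
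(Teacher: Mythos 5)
Your proof is correct. Note that the paper itself gives no proof of Proposition \ref{pro210} --- it is quoted from \cite{HYY14} (Lemma 4.2) --- but your induction is exactly the natural argument behind it: the base case follows from $(M(\mathbb{A}))_{uj}=a_{uj\cdots j}$, and in the inductive step the structural hypothesis forces every nonzero entry $a_{ui_2\cdots i_m}$ to have $i_2=\cdots=i_m=v$, so the tensor recurrence (\ref{eq22}) collapses to the matrix reachability recurrence ``$\exists\, v\in S_k$ with $(M(\mathbb{A}))_{uv}>0$,'' which matches $S_{k+1}(M(\mathbb{A}),j)$ by nonnegativity of the entries of $(M(\mathbb{A}))^{k+1}$.
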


The relations between a tensor $\mathbb{A}$ and the majorization matrix $M(\mathbb{A})$ are important and useful.

\begin{proposition}\label{pro26}{\rm (\cite{Sh12}, Corollary 4.1 )}
   Let $\mathbb{A}$  be a nonnegative tensor with order $m$ and dimension $n$. If $M(\mathbb{A})$ is primitive, then $\mathbb{A}$ is also primitive.
   \end{proposition}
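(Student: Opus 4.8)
The plan is to reduce the claim to the zero-pattern criterion of Proposition \ref{pro22}, namely that $\mathbb{A}$ is primitive if and only if $M(\mathbb{A}^r)>0$ for some positive integer $r$, and to compare the reachability sets $S_k(\mathbb{A},j)$ attached to the tensor with the matrix reachability sets $S_k(M(\mathbb{A}),j)$ of Proposition \ref{pro29}. The goal is the containment
\[S_k(M(\mathbb{A}),j)\subseteq S_k(\mathbb{A},j)\qquad\text{for all } j\in[n],\ k\ge 1.\]
Since $M(\mathbb{A})$ is primitive, there is an integer $K$ with $(M(\mathbb{A}))^K>0$, hence $S_K(M(\mathbb{A}),j)=[n]$ for every $j$; the containment then forces $S_K(\mathbb{A},j)=[n]$ for all $j$, which says exactly $M(\mathbb{A}^K)>0$, and Proposition \ref{pro22} gives primitivity of $\mathbb{A}$.

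For the containment I would argue by induction on $k$. In the base case, Definition \ref{defn21} gives $(M(\mathbb{A}))_{uj}=a_{uj\cdots j}$, so $S_1(\mathbb{A},j)=\{u:a_{uj\cdots j}>0\}=\{u:(M(\mathbb{A}))_{uj}>0\}=S_1(M(\mathbb{A}),j)$ and equality holds. For the inductive step, the ordinary matrix identity $(M(\mathbb{A})^{k+1})_{uj}=\sum_v (M(\mathbb{A}))_{uv}(M(\mathbb{A})^k)_{vj}$ shows that $u\in S_{k+1}(M(\mathbb{A}),j)$ iff some $v\in S_k(M(\mathbb{A}),j)$ has $(M(\mathbb{A}))_{uv}>0$. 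By the inductive hypothesis such a $v$ lies in $S_k(\mathbb{A},j)$; taking $i_2=\cdots=i_m=v$ in the tensor recurrence (\ref{eq22}) and using $a_{uv\cdots v}=(M(\mathbb{A}))_{uv}>0$ places $u$ in $S_{k+1}(\mathbb{A},j)$, which closes the induction.

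An alternative route that leans on results already in hand: let $\mathbb{B}$ be the ``diagonal part'' of $\mathbb{A}$, i.e. $b_{ui_2\cdots i_m}=a_{ui_2\cdots i_m}$ when $i_2=\cdots=i_m$ and $b_{ui_2\cdots i_m}=0$ otherwise. Then $M(\mathbb{B})=M(\mathbb{A})$, and $\mathbb{B}$ satisfies the hypothesis of Proposition \ref{pro210}, so $S_k(\mathbb{B},j)=S_k(M(\mathbb{B}),j)=S_k(M(\mathbb{A}),j)$. Because $\mathbb{B}\le\mathbb{A}$ entrywise, the recurrence (\ref{eq22}) yields the monotonicity $S_k(\mathbb{B},j)\subseteq S_k(\mathbb{A},j)$, and the desired containment follows at once.

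The one subtlety worth flagging is that a single step of the tensor power mixes $m-1$ entries of $M(\mathbb{A}^k)$ through the products $(M(\mathbb{A}^k))_{i_2j}\cdots(M(\mathbb{A}^k))_{i_mj}$, so $\mathbb{A}^{k+1}$ does not literally behave like a matrix product of $M(\mathbb{A})$ with $M(\mathbb{A}^k)$. The resolution is that one is free to take all lower indices equal, $i_2=\cdots=i_m=v$: the diagonal entries $a_{uv\cdots v}$ are precisely the entries $(M(\mathbb{A}))_{uv}$, so restricting to these diagonal contributions lets the tensor recurrence simulate multiplication by $M(\mathbb{A})$. This is exactly where the hypothesis that $M(\mathbb{A})$ is primitive is used, and it is the main point to get right.
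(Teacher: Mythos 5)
Your proposal is correct. Note that the paper states this proposition without proof (it is quoted from Shao's paper, Corollary 4.1), and your argument is exactly the standard one underlying that citation: restricting the tensor recurrence (\ref{eq22}) to diagonal index tuples $i_2=\cdots=i_m=v$ shows that the reachability sets of $M(\mathbb{A})$ are contained in those of $\mathbb{A}$ (equivalently, the zero pattern of $M(\mathbb{A}^k)$ dominates that of $(M(\mathbb{A}))^k$), so $(M(\mathbb{A}))^K>0$ forces $M(\mathbb{A}^K)>0$ and Proposition \ref{pro22} applies; your alternative route via the diagonal part $\mathbb{B}$ and Proposition \ref{pro210} is an equally valid packaging of the same idea.
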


\begin{proposition}\label{pro27}{\rm (\cite{YHY13}, Corollary 3.4)}
Let $\mathbb{A}$ be a nonnegative primitive  tensor with order $m$ and dimension $n$ such that
  $a_{ii_2\cdots i_m}=0$ if $i_2\cdots i_m\not=i_2\cdots i_2$  for any $i\in [n]$.
  If $M(\mathbb{A})$ is primitive, then
   $\gamma(\mathbb{A})=\gamma(M(\mathbb{A})).$
\end{proposition}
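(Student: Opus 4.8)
The plan is to push everything down to the combinatorial reachability sets $S_k(\cdot,j)$ and then assemble the already-stated propositions. The point of the diagonal hypothesis $a_{ii_2\cdots i_m}=0$ whenever $i_2\cdots i_m\neq i_2\cdots i_2$ is that it is precisely the hypothesis of Proposition \ref{pro210}, so that the combinatorial behaviour of $\mathbb{A}$ is completely slaved to that of its majorization matrix.

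First I would invoke Proposition \ref{pro210}: under the diagonal condition, for every $j\in[n]$ and every positive integer $k$ we have $S_k(\mathbb{A},j)=S_k(M(\mathbb{A}),j)$. This is the engine of the argument --- it says $\mathbb{A}$ and $M(\mathbb{A})$ generate identical sets at every stage. Next I would read off the $j$-primitive degrees on both sides through Proposition \ref{pro28}(iii): $\gamma_j(\mathbb{A})$ is the least $k$ with $S_k(\mathbb{A},j)=[n]$, and, viewing the matrix $M(\mathbb{A})$ as an order-$2$ tensor (for which $M(M(\mathbb{A}))=M(\mathbb{A})$), the same proposition identifies $\gamma_j(M(\mathbb{A}))$ as the least $k$ with $S_k(M(\mathbb{A}),j)=[n]$. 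Because the two families of sets coincide for every $k$, the first index at which each family fills up $[n]$ is the same, giving $\gamma_j(\mathbb{A})=\gamma_j(M(\mathbb{A}))$ for each $j$; primitivity of both $\mathbb{A}$ and $M(\mathbb{A})$ ensures these indices exist.

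Finally I would maximize over $j$. Proposition \ref{pro24} gives $\gamma(\mathbb{A})=\max_{1\le j\le n}\gamma_j(\mathbb{A})$, and applying the same proposition to the order-$2$ tensor $M(\mathbb{A})$ gives $\gamma(M(\mathbb{A}))=\max_{1\le j\le n}\gamma_j(M(\mathbb{A}))$, whence
$$\gamma(\mathbb{A})=\max_{1\le j\le n}\gamma_j(\mathbb{A})=\max_{1\le j\le n}\gamma_j(M(\mathbb{A}))=\gamma(M(\mathbb{A})).$$
Essentially all of the real content is carried by Proposition \ref{pro210}; once the reachability sets are known to agree, the remainder is the bookkeeping of chaining Propositions \ref{pro28}(iii) and \ref{pro24}. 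The one step that needs slight care --- and the only place where one must momentarily leave the stated propositions --- is the reduction for $m=2$: one should check that the map $T_A$ of Definition \ref{defn11} collapses to $T_A(x)=Ax$, so that the tensor primitive degree of $M(\mathbb{A})$ coincides with its classical matrix exponent, justifying writing it as $\gamma(M(\mathbb{A}))$.
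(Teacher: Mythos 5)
Your argument is correct. One thing to note at the outset: this paper never proves Proposition \ref{pro27} itself --- it is imported verbatim from \cite{YHY13} (Corollary 3.4) --- so there is no internal proof to compare against; the comparison can only be with the route the paper's own machinery suggests. Your chain is sound and uses exactly that machinery: Proposition \ref{pro210} gives $S_k(\mathbb{A},j)=S_k(M(\mathbb{A}),j)$ for all $k$ and $j$; Proposition \ref{pro28}(iii) identifies $\gamma_j(\mathbb{A})$ and $\gamma_j(M(\mathbb{A}))$ as the first indices at which these identical families reach $[n]$ (and primitivity of both objects guarantees such indices exist); and Proposition \ref{pro24}, applied to $\mathbb{A}$ and to the primitive order-$2$ tensor $M(\mathbb{A})$, converts the $j$-wise equalities into $\gamma(\mathbb{A})=\gamma(M(\mathbb{A}))$. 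You are also right to flag the one off-proposition step, the identification of the tensor primitive degree of a matrix with its classical exponent; besides your direct observation that $T_A(x)=Ax$ when $m=2$, this also follows from Proposition \ref{pro22}, since for an order-$2$ tensor $A$ one has $M(A^k)=A^k$, so the condition $M(A^k)>0$ is literally $A^k>0$. For comparison, the shortest route suggested by the paper's own toolkit bypasses the $j$-indexed degrees entirely: under the diagonal hypothesis one checks that $M(\mathbb{A}^k)=(M(\mathbb{A}))^k$ for every $k$ (the same computation that underlies Proposition \ref{pro210}), and then Proposition \ref{pro22} says $\gamma(\mathbb{A})$ is the least $k$ with $M(\mathbb{A}^k)>0$, which is then tautologically the least $k$ with $(M(\mathbb{A}))^k>0$, i.e.\ $\gamma(M(\mathbb{A}))$. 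Both arguments carry the same content; yours trades the power identity for the reachability-set identity, which is the form of that fact this paper actually states, at the cost of routing through $\gamma_j$ and the max over $j$.
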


Now we define the nonnegative tensor $\mathbb{A}_0=(a_{i_1i_2\ldots i_m})_{1\leq i_j\leq n \hskip.2cm (j=1,\ldots, m)}$ with order $m$ and dimension $n$ such that

{\rm(i). } The majorization matrix  $M(\mathbb{A}_{0})$  is given by

$$M_1=\left(
                 \begin{array}{cccccc}
                    0 & 0 & \cdots & 0 & 1 & 1 \\
                    1 & 0 & \cdots & 0 & 0 & 0 \\
                    0 & 1 & \cdots & 0 & 0 & 0\\
                    \vdots &\vdots & \ddots & \vdots & \vdots &\vdots \\
                    0 & 0 & \cdots & 1 & 0 & 0 \\
                    0 & 0 & \cdots & 0 & 1 & 0 \\
                 \end{array}
               \right).
$$


{\rm (ii). } $a_{ii_2\cdots i_m}=0$, if $i_2\cdots i_m\not=i_2\cdots i_2$ for any $i\in[n]$.

It is well-known that $M_1$ is primitive and the primitive exponent $\gamma(M_1)=(n-1)^2+1$. Then by Propositions \ref{pro26} $\sim$ \ref{pro27}, it can easily be seen that the tensor $\mathbb{A}_0$ is primitive  and its primitive degree $\gamma(\mathbb{A}_0)=(n-1)^2+1$.
Furthermore, there are more good  properties on tensor $\mathbb{A}_0$ as follows.

\begin{proposition}\label{pro212}{\rm (\cite{HYY14}, Proposition 4.5)}
Let $\mathbb{A}_0$ be the nonnegative primitive  tensor with order $m$ and dimension $n$ defined as above. Then
$\gamma_{n-1}(\mathbb{A}_0)=n^2-3n+3$.
\end{proposition}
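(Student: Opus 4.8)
The plan is to translate the computation of $\gamma_{n-1}(\mathbb{A}_0)$ into a purely combinatorial question about walk lengths in a fixed digraph, and then to resolve that question with the Frobenius (coin-exchange) theory of numerical semigroups. First I would invoke the reductions already available in the excerpt. Since $\mathbb{A}_0$ satisfies $a_{ii_2\cdots i_m}=0$ whenever $i_2\cdots i_m\ne i_2\cdots i_2$, Proposition \ref{pro210} gives $S_k(\mathbb{A}_0,n-1)=S_k(M_1,n-1)$ for all $k$, and Proposition \ref{pro29} identifies $S_k(M_1,n-1)$ with the set of vertices reachable from $n-1$ by a walk of length exactly $k$ in the reversed digraph $\overleftarrow{D(M_1)}$. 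By Proposition \ref{pro28}(iii), $\gamma_{n-1}(\mathbb{A}_0)$ is then the least $k$ for which every vertex is reachable from $n-1$ by a walk of length $k$ in $\overleftarrow{D(M_1)}$. I would record the explicit arc set of $\overleftarrow{D(M_1)}$: the arcs $i\to i+1$ for $1\le i\le n-1$ together with $n-1\to 1$ and $n\to 1$. This digraph has exactly two cycles, $C_{n-1}\colon 1\to2\to\cdots\to(n-1)\to1$ of length $n-1$ and $C_{n}\colon 1\to2\to\cdots\to n\to1$ of length $n$, and the decisive structural fact is that the start vertex $n-1$ lies on \emph{both} of them.

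Because $n-1$ lies on both cycles, the set of lengths of closed walks at $n-1$ is exactly the numerical semigroup $\mathcal S=\langle n-1,n\rangle$. Splitting any walk from $n-1$ to $u$ at its last visit to $n-1$, I would show that the set $L(u)$ of lengths of walks from $n-1$ to $u$ equals $\mathcal S+F(u)$, where $F(u)$ is the finite set of lengths of walks from $n-1$ to $u$ that avoid $n-1$ internally. A direct inspection of the out-directions from $n-1$ gives $F(n-1)=\{0\}$, $F(n)=\{1\}$, $F(1)=\{1,2\}$, and $F(u)=\{u,u+1\}$ for $2\le u\le n-2$. For each $u$ let $\phi(u)$ denote the least $t$ such that every integer $\ge t$ lies in $L(u)$. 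Using the monotonicity in Proposition \ref{pro28}(ii) (once $S_k=[n]$ it stays $[n]$), a short argument shows $\gamma_{n-1}(\mathbb{A}_0)=\max_{1\le u\le n}\phi(u)$: writing $\Phi=\max_u\phi(u)$ attained at $u^{*}$, every $k\ge\Phi$ lies in all $L(u)$ so $S_\Phi=[n]$, while $\Phi-1=\phi(u^{*})-1\notin L(u^{*})$ forces $S_{\Phi-1}\ne[n]$.

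It then remains to evaluate each $\phi(u)$, which is a Frobenius-type computation for $\mathcal S=\langle n-1,n\rangle$ with coprime generators. Its Frobenius number is $g=(n-1)n-(n-1)-n=n^2-3n+1$, and I would use the Sylvester symmetry $x\in\mathcal S\iff g-x\notin\mathcal S$ (for $0\le x\le g$) to locate the largest pair of consecutive gaps: since the smallest pair of consecutive elements of $\mathcal S$ is $(n-1,n)$, the largest pair of consecutive gaps is $(n^2-4n+1,\,n^2-4n+2)$. This yields $\phi(n-1)=g+1=n^2-3n+2$ from $L(n-1)=\mathcal S$, and $\phi(n)=g+2=n^2-3n+3$ from $L(n)=1+\mathcal S$, while for the union-type sets $L(u)=(c+\mathcal S)\cup((c{+}1)+\mathcal S)$ (with $c=\min F(u)$) I obtain $\phi(1)=n^2-4n+4$ and $\phi(u)=u+(n^2-4n+3)$ for $2\le u\le n-2$. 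Each of these last values is at most $n^2-3n+1$, and $\phi(1),\phi(n-1)\le g+1$, so the maximum is attained at $u=n$, giving $\gamma_{n-1}(\mathbb{A}_0)=n^2-3n+3$.

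The main obstacle is the final paragraph: one must show that vertex $n$, rather than the ``worst-looking'' vertex $n-1$ (whose closed-walk set is $\mathcal S$ itself), is the binding vertex. This hinges on identifying not merely the Frobenius number but the second-to-last gap of $\langle n-1,n\rangle$; the single forced extra step in $F(n)=\{1\}$, coming from the unique in-arc $n-1\to n$, is precisely what lifts $\phi(n)$ one unit above $\phi(n-1)$. Everything else is routine bookkeeping, and I would sanity-check the closed formula on $n=3$ and $n=4$ (where it predicts $3$ and $7$) to rule out off-by-one errors in the gap analysis.
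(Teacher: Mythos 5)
Your argument is correct, and it reaches the result by a genuinely different route than the one the paper relies on. The paper never proves Proposition \ref{pro212} itself --- it imports it from \cite{HYY14} --- but the method behind that proof is visible in Propositions \ref{pro213} and \ref{pro214}: after the same reduction you use (Propositions \ref{pro210}, \ref{pro29} and (iii) of Proposition \ref{pro28}) to walks in $\overleftarrow{D(M_1)}$, one computes \emph{every} set $S_k(\mathbb{A}_0,n-1)$ explicitly as a block of $q+2$ consecutive residues $\{r-q-1,\ldots,r\}\pmod n$, where $k=(n-1)q+r$, and reads off that the first $k$ with $|S_k(\mathbb{A}_0,n-1)|=n$ is $(n-1)(n-2)+1=n^2-3n+3$. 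You instead fix the target vertex $u$ and analyze the length set $L(u)=F(u)+\langle n-1,n\rangle$ via the last-return decomposition, turning the question into a Frobenius-number and gap computation for the numerical semigroup $\langle n-1,n\rangle$. Your reduction $\gamma_{n-1}(\mathbb{A}_0)=\max_{u}\phi(u)$ is correctly secured by (ii) of Proposition \ref{pro28}, and all your values $\phi(n-1)=n^2-3n+2$, $\phi(n)=n^2-3n+3$, $\phi(1)=n^2-4n+4$, $\phi(u)=u+n^2-4n+3$ for $2\le u\le n-2$ check out; the supporting facts --- that the closed-walk lengths at $n-1$ are exactly $\langle n-1,n\rangle$ and that $F(n)=\{1\}$, $F(1)=\{1,2\}$, $F(u)=\{u,u+1\}$ --- hold because every vertex other than $n-1$ has out-degree one in $\overleftarrow{D(M_1)}$, so walks branch only at $n-1$; it would strengthen the write-up to state that reason explicitly rather than leave it to ``direct inspection.'' As for what each approach buys: the paper's computation produces the full sequence of sets $S_k(\mathbb{A}_0,n-1)$, which is precisely what Section 3 consumes later (Propositions \ref{pro31} and \ref{pro32} lean on Proposition \ref{pro214} and Remark \ref{rem215}), whereas your argument isolates the single value $\gamma_{n-1}(\mathbb{A}_0)$ and explains structurally why it is the Frobenius number plus two rather than plus one: the binding vertex is $n$, whose unique in-arc forces the extra unit in $F(n)=\{1\}$.
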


\begin{proposition}\label{pro213}{\rm (\cite{HYY14}, Remark 4.6 and Proposition 4.9)}
Let $k$ be positive integer with $1\leq k\le n^2-3n+2$. Then
 $S_1(\mathbb{A}_0, n-1), S_2(\mathbb{A}_0, n-1), \ldots, S_k(\mathbb{A}_0, n-1)$
are pairwise distinct proper subsets of $[n]$, and $S_{n^2-3n+2}(\mathbb{A}_0, n-1)=[n-1]$.
\end{proposition}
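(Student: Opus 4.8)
The plan is to reduce everything to the nonzero pattern of $M_1$ and then use the known value of $\gamma_{n-1}(\mathbb{A}_0)$. Since $\mathbb{A}_0$ is built so that $a_{ii_2\cdots i_m}=0$ unless $i_2=\cdots=i_m$, Proposition \ref{pro210} gives $S_k(\mathbb{A}_0,n-1)=S_k(M_1,n-1)$ for every $k$, and the tensor recurrence (\ref{eq22}) collapses to the matrix rule: $u\in S_{k+1}$ iff there is $w\in S_k$ with $(M_1)_{uw}>0$ (equivalently, by Proposition \ref{pro29}, $S_{k+1}$ is the set of vertices reachable in one step from $S_k$ in $\overleftarrow{D(M_1)}$). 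Write $S_k=S_k(\mathbb{A}_0,n-1)$. Reading off $M_1$, whose nonzero entries are $(1,n-1),(1,n)$ and $(i,i-1)$ for $2\le i\le n$, I obtain the two membership rules
$$u\in S_{k+1}\iff u-1\in S_k \quad (2\le u\le n), \qquad 1\in S_{k+1}\iff S_k\cap\{n-1,n\}\ne\varnothing.$$
These rules, together with Proposition \ref{pro212} (which supplies $\gamma_{n-1}(\mathbb{A}_0)=n^2-3n+3$), are the only inputs needed.

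First I would settle properness. By Proposition \ref{pro28}(iii), $\gamma_{n-1}(\mathbb{A}_0)$ is the least index $k$ with $S_k=[n]$; since it equals $n^2-3n+3$, every $S_k$ with $k\le n^2-3n+2<\gamma_{n-1}(\mathbb{A}_0)$ is a proper subset of $[n]$.

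Next I would prove pairwise distinctness by contradiction. Suppose $S_a=S_b$ with $1\le a<b\le n^2-3n+2$. Then Proposition \ref{pro28}(i), applied with $i=j=n-1$, forces $S_{a+r}=S_{b+r}$ for all $r\ge 1$. Choosing $r=\gamma_{n-1}(\mathbb{A}_0)-b\ge 1$ gives $S_{a+r}=S_{b+r}=S_{\gamma_{n-1}(\mathbb{A}_0)}=[n]$, while $a+r=\gamma_{n-1}(\mathbb{A}_0)-(b-a)<\gamma_{n-1}(\mathbb{A}_0)$. This contradicts the minimality of $\gamma_{n-1}(\mathbb{A}_0)$ in Proposition \ref{pro28}(iii), so $S_1,\dots,S_{n^2-3n+2}$ are pairwise distinct.

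Finally I would identify $S_{k^*}$ for $k^*=n^2-3n+2=\gamma_{n-1}(\mathbb{A}_0)-1$. By the previous two steps $S_{k^*}$ is a proper subset of $[n]$, while $S_{k^*+1}=S_{\gamma_{n-1}(\mathbb{A}_0)}=[n]$. Applying the in-neighbour rule at step $k^*$: for each $u\in\{2,\dots,n\}$ the membership $u\in S_{k^*+1}=[n]$ forces $u-1\in S_{k^*}$, and letting $u$ run over $\{2,\dots,n\}$ yields $[n-1]=\{1,\dots,n-1\}\subseteq S_{k^*}$. Since $S_{k^*}\ne[n]$, this can only be $S_{k^*}=[n-1]$, as claimed. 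The main obstacle, and the single place where the specific structure of $M_1$ is used, is exactly this last step: it rests on the fact that each vertex $u\ge 2$ of $\overleftarrow{D(M_1)}$ has the unique in-neighbour $u-1$, which is what makes the pair of constraints ``$S_{k^*+1}=[n]$ and $S_{k^*}$ proper'' pin down $S_{k^*}$ uniquely.
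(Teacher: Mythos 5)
Your proof is correct, and it is worth noting at the outset that the paper itself does not prove Proposition \ref{pro213} at all: it imports the statement from \cite{HYY14} (Remark 4.6 and Proposition 4.9). So the only meaningful comparison is with the paper's closest in-house machinery, namely the proof of Proposition \ref{pro214}, which takes the opposite route: it reduces $S_k(\mathbb{A}_0,n-1)$ to walks in $\overleftarrow{D(M_1)}$ and then \emph{explicitly computes} the sets, obtaining the closed formula $S_{(n-1)q+r}(\mathbb{A}_0,n-1)=\{r-q-1,\dots,r\}\pmod n$ of cardinality $q+2$, from which distinctness, properness, and $S_{n^2-3n+2}(\mathbb{A}_0,n-1)=[n-1]$ all drop out by inspection. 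You instead avoid any explicit computation of the sets: you take the value $\gamma_{n-1}(\mathbb{A}_0)=n^2-3n+3$ from Proposition \ref{pro212} as a black box, get properness from the minimality in Proposition \ref{pro28}(iii), get distinctness from the shift-invariance of Proposition \ref{pro28}(i) (a nice eventual-periodicity argument: a repeat before time $\gamma_{n-1}$ would force $[n]$ to appear too early), and pin down $S_{n^2-3n+2}$ using only the unique-in-neighbour structure of $\overleftarrow{D(M_1)}$ plus properness. Your argument is shorter and more conceptual, and it generalizes to any tensor whose reversed majorization digraph has unique in-neighbours off one vertex; what it buys is bought on credit, however, since it leans on Proposition \ref{pro212}, which in \cite{HYY14} is itself established by essentially the explicit computation you avoided --- so as a self-contained development the paper's formula-based approach does the work once and harvests both results, whereas your route is the cleaner one given the stated inputs of this paper.
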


For convenience, we  let $|a|_n$ denote the least positive integer $t$ with $t \equiv a \pmod{n}$, and
let a set $S=\{a_1,\cdots,a_s\}\pmod{n}$ denote the set $S=\{|a_1|_n,\cdots,|a_s|_n\}$.

\begin{proposition}\label{pro214}
Let $k$ be a positive integer and $1\le k\le n^2-3n+2$.

{\rm (i). }If $k=(n-1)q+r$ with $q\ge 0$ and $1\le r\le n-1$, then

\begin{equation}\label{eq24}
S_{k}(\mathbb{A}_0,n-1)=\{r-q-1,\cdots,r-1,r\}\pmod{n}
\end{equation}
and $|S_{k}(\mathbb{A}_0,n-1)|=q+2$.

{\rm (ii). } Let $t(\geq 1)$, $j(\geq 0)$ be integers. Then for any positive integer $t$ and any integer $j\in \{0,1,\ldots, n-2\}$,
we have $S_{t+(n-1-j)}(\mathbb{A}_0,j)=S_{t}(\mathbb{A}_0,n-1)$.
\end{proposition}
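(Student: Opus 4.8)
The plan is to translate everything into the reversed digraph $\overleftarrow{D(M_1)}$ and run an induction on $k$. Since $\mathbb{A}_0$ satisfies condition (ii) of its definition, Proposition \ref{pro210} gives $S_k(\mathbb{A}_0,j)=S_k(M_1,j)$, and Proposition \ref{pro29} identifies $S_k(M_1,j)$ with the set of endpoints of walks of length $k$ starting at $j$ in $\overleftarrow{D(M_1)}$. Reading off $M_1$, the arcs of $\overleftarrow{D(M_1)}$ are $i\to i+1$ for $1\le i\le n-2$, together with $n-1\to 1$, $n-1\to n$ and $n\to 1$; equivalently, writing $\sigma(v)=v+1\pmod n$ (representatives in $\{1,\ldots,n\}$), the out-neighbour set of $v$ is $\{\sigma(v)\}$ for every $v$, with a single extra arc to $1$ precisely when $v=n-1$. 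Hence the recurrence (\ref{eq22}) reads
\[
S_{k+1}(\mathbb{A}_0,n-1)=\sigma\bigl(S_k(\mathbb{A}_0,n-1)\bigr)\cup\bigl(\{1\}\ \text{if } n-1\in S_k(\mathbb{A}_0,n-1)\bigr),
\]
which is the engine of the whole argument.

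For part (i) I would induct on $k$, carrying the statement that $S_k(\mathbb{A}_0,n-1)$ is the block of $q+2$ consecutive residues $\{r-q-1,\ldots,r\}\pmod n$. The base case $k=1$ ($q=0,r=1$) is $\{1,n\}=\{0,1\}\pmod n$. For the inductive step there are two regimes. When $1\le r\le n-2$ (a step inside a block), $\sigma$ shifts the interval to $\{r-q,\ldots,r+1\}$; one checks that $n-1$ lies in the old window exactly when $q\ge r$, and in that case $1$ already belongs to the shifted window, so appending $\{1\}$ changes nothing and the size stays $q+2$. When $r=n-1$ (a block boundary, $k=(n-1)(q+1)$), the window is $\{n-2-q,\ldots,n-1\}$, so $n-1$ is present, $1\notin\sigma(S_k)$ because $q\le n-3$, and appending $\{1\}$ genuinely enlarges the window to $\{n-1-q,\ldots,n,1\}$, matching the formula for $k+1$ with new parameters $q+1,\,1$ and size $q+3$. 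The bound $q\le n-3$ (forced by $k\le n^2-3n+2=(n-1)(n-2)$) guarantees $q+2\le n-1$, so the window is always a proper arc and no genuine wraparound occurs; this is the point that makes the two-case bookkeeping consistent.

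Part (ii) I would deduce from a single base identity together with Proposition \ref{pro28}(i). Concretely, it suffices to prove $S_{n-j}(\mathbb{A}_0,j)=S_1(\mathbb{A}_0,n-1)$ for each $j\in\{0,1,\ldots,n-2\}$ (using the convention $S_\cdot(\mathbb{A}_0,0)=S_\cdot(\mathbb{A}_0,n)$ of Remark \ref{rem211}); Proposition \ref{pro28}(i) then propagates this equality to $S_{t+(n-1-j)}(\mathbb{A}_0,j)=S_t(\mathbb{A}_0,n-1)$ for all $t\ge1$. The base identity is immediate from the digraph: starting at $j$ (or at $n$ when $j=0$), every vertex on the path $j\to j+1\to\cdots\to n-1$ has a unique out-neighbour, so the walk is forced and reaches $n-1$ after exactly $n-1-j$ steps (resp. $n-1$ steps from $n$); the final, $(n-j)$-th step then branches to $\{1,n\}$. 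Thus $S_{n-j}(\mathbb{A}_0,j)=\{1,n\}=S_1(\mathbb{A}_0,n-1)$, as required.

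The main obstacle is the modular bookkeeping in part (i): the recurrence occasionally appends the vertex $1$, and the crux is to show that this append is redundant at every step inside a block yet strictly enlarges the window at each block boundary, so that $|S_k|$ follows the prescribed pattern $q+2$. Keeping the window a proper arc via $q\le n-3$ is exactly what tames the wraparound and lets the clean interval description $\{r-q-1,\ldots,r\}\pmod n$ persist throughout the allowed range of $k$.
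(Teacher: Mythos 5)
Your proof is correct and follows essentially the same route as the paper: both reduce $S_k(\mathbb{A}_0,j)$ to walk-sets in the reversed digraph $\overleftarrow{D(M_1)}$ via Propositions \ref{pro210} and \ref{pro29}, and both prove (ii) by establishing the base identity $S_{n-j}(\mathbb{A}_0,j)=S_1(\mathbb{A}_0,n-1)=\{1,n\}$ and propagating it with Proposition \ref{pro28}(i). The only difference is one of detail: where the paper dismisses part (i) as ``simple calculation'' from Figure 1, you supply the explicit induction with the shift-and-append recurrence and the $q\le n-3$ bookkeeping, which is a welcome filling-in rather than a departure.
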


\begin{proof}
{\rm (i). } By the definition of $\mathbb{A}_0$ and  Propositions \ref{pro29}$\sim$\ref{pro210}, we have

$S_{k}(\mathbb{A}_0,n-1)$

 \noindent \hskip.2cm$=S_{k}(M(\mathbb{A}_0),n-1) $

 \noindent \hskip.2cm $=\{u\in [n] \hskip0.08cm|\hskip0.08cm \mbox{ there exists a walk of length }  k \mbox { from } n-1 \mbox{  to }  u
                       \mbox{ in the digraph } \overleftarrow{D(M(\mathbb{A}_0))}\}.$

Now we can obtain the conclusion (i)  from the  digraph $\overleftarrow{D(M(\mathbb{A}_0))}=\overleftarrow{D(M_1)}$ (See Figure $1$)
by simple calculation.

$$
        \hskip1cm
 \xy 0;/r3pc/: \POS (1,1) *\xycircle<3pc,3pc>{};
        \POS(1,2) *@{*}*+!D{n}="n";
        \POS(1.5,1.86) \ar@{->}(1.5,1.86);(1.6,1.8)="a";
        \POS(.5,1.86)   \ar@{->}(.5,1.86);(.6,1.91)="b";
        \POS(1.8,1.6)  *@{*}*+!L{\hspace*{3pt}{n\hspace*{-3pt}-\hspace*{-3pt}1}}="c";
        \POS(.2,1.6)   *@{*}*+!R{\mathrm{1}}="d";
        \POS(.2,.4)    \ar@{->}(.2,.4) ;(.19,.415) ="e";
        \POS(1.8,.4)   \ar@{->}(1.8,.4);(1.79,.385)="f";
        \POS "c" \ar @{->} (.7,1.6) \ar @{-} "d";
         \POS (0.015, .9) *@{*}*+!R{2}="g";
         \POS(.4,0.2) *@{*}*+!R{\mathrm{}}="k";
      \POS(.85,0.006) \ar@{->}(1,0.00);(0.85,.006)="r";
         \POS(1.6,0.2) *@{*}*+!R{\mathrm{}}="l";
           \POS(0.8,0.2) *@{*}*+!R{\mathrm{}}="m";
             \POS(1.0,0.2) *@{*}*+!R{\mathrm{}}="p";
               \POS(1.2,0.2) *@{*}*+!R{\mathrm{}}="q";

         \POS(2.0,.9)  *@{*}*+!L{\hspace*{3pt}{n\hspace*{-3pt}-\hspace*{-3pt}2}}="h";
        \POS(1.99,1.1) \ar@{->}(1.98,1.25);(1.99,1.1)="i";
        \POS(.01,1.1)   \ar@{->}(.01,1.1);(.023,1.25)="j";

 \endxy
 $$
 $$\mbox{ \qquad Figure 1.  digraph } \overleftarrow{D(M(\mathbb{A}_0))}$$

{\rm (ii). } By the definition of $\mathbb{A}_0$, an easy computation shows that $$\{n,1\}=S_1(\mathbb{A}_0,n-1)=S_2(\mathbb{A}_0,n-2)=\cdots=S_{n-1}(\mathbb{A}_0,1)=S_n(\mathbb{A}_0,0).$$

It implies that
\begin{equation}\label{eq25}
S_1(\mathbb{A}_0,n-1)=S_{n-j}(\mathbb{A}_0,j) \mbox{ for any  } j\in \{0,1,\ldots, n-2\}.
\end{equation}

\noindent Then for any integer $r\ge1$,  by (i) of Proposition \ref{pro28}, we have

\begin{equation}\label{eq26}
S_{1+r}(\mathbb{A}_0,n-1)=S_{n-j+r}(\mathbb{A}_0,j) \mbox { for any } j\in \{0,1,\ldots, n-2\}.
 \end{equation}

 Combining (\ref{eq25}) and (\ref{eq26}), (ii) holds.
\end{proof}

\begin{remark}\label{rem215}
For convenience, we regard a set $\{n,1\}$ as a consecutive positive integers subset of $\{1,2,\ldots, n\}$.
Then it implies that  $\{n-1,n, 1\}$, $\{n,1,2\}$, $\{n,1,2,3\}$ and so on, and thus $S_k(\mathbb{A}_0,n-1)$ are  consecutive positive integers subsets of $\{1,2,\cdots,n\}$
for all $k\in [n^2-3n+2]$ by the result {\rm (i) } of Proposition \ref{pro214}.
\end{remark}

\section{The exponent set of nonnegative primitive tensors}

\hskip0.6cm In this section, we will show that there are no gaps  in tensor case $m\ge 3$,
that is, $E(m,n)= [(n-1)^2+1]$ when $m\geq 3$. It implies that the results of the case $m\geq 3$ is totally different from the case $m=2$.

Let $s_1,s_2, \ldots, s_l$ be $l$ elements which are not necessarily to be  distinct.
For convenience, we let $\{s_1,s_2, \ldots, s_l\}$   denote the set of all different elements from $s_1,s_2, \ldots, s_l$.

Let $k(\geq 1), n(\geq 2), q(\geq 0), r(\geq 1)$ be  integers,
$\mathbb{A}_0=(a_{i_1i_2\ldots i_m})_{1\leq i_j\leq n \hskip.2cm (j=1,\ldots, m)}$ be a nonnegative tensor  with order $m$ and dimension $n$ defined in Section 2,  $k=(n-1)q+r$ with $1\le r\le n-1$, then $0\leq q\leq n-3$.
We define the nonnegative tensor $\mathbb{A}_k=(a^{(k)}_{i_1i_2\cdots i_m})_{1\leq i_j\leq n \hskip.2cm (j=1,\ldots, m)}$ with order $m$ and dimension $n$ such that
\vskip0.2cm
{\rm (i). }$M(\mathbb{A}_k)=M(\mathbb{A}_0)=M_1;$

{\rm (ii). }$a^{(k)}_{i\alpha}=1$, if $i\in[n]\backslash\{r-q, r-q+1, \ldots, r,r+1\} \pmod n$ and $\alpha=i_2\cdots i_m\in[n]^{m-1}$ with $\{i_2,\cdots,i_m\}=\{r-q-1,r\} \pmod n;$

{\rm (iii). }$a^{(k)}_{i_1i_2\cdots i_m}=0$, except for {\rm (i)} and {\rm (ii)}.
\vskip0.2cm
By Proposition \ref{pro26}, it's easy to see the tensor $\mathbb{A}_k$ is primitive from the fact that $M(\mathbb{A}_k)=M(\mathbb{A}_0)=M_1$ is primitive. Now we investigate the relations between the tensors $\mathbb{A}_k$ and $\mathbb{A}_0$.

\begin{proposition}\label{pro31}
Let $k(\geq 1), n(\geq 2), q(\geq 0), r$ be  integers, and $k=(n-1)q+r\in [n^2-3n+2]$  with $1\le r\le n-1$, $\mathbb{A}_0$ and $\mathbb{A}_k$ defined as above.
Then for any integer $t\in [k]$, we have
\begin{equation}\label{eq31}
 S_t(\mathbb{A}_k,n-1)=S_t(\mathbb{A}_0,n-1).
\end{equation}
\end{proposition}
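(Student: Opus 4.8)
The plan is to induct on $t$ and compare how $\mathbb{A}_k$ and $\mathbb{A}_0$ propagate the sets $S_t(\cdot\,,n-1)$ under the recurrence (\ref{eq22}). Because $M(\mathbb{A}_k)=M(\mathbb{A}_0)=M_1$, the case $t=1$ holds at once, since $S_1(\mathbb{A},n-1)$ depends only on $M(\mathbb{A})$. For the inductive step I would assume $S_t(\mathbb{A}_k,n-1)=S_t(\mathbb{A}_0,n-1)=:S$ with $1\le t\le k-1$ and feed $S$ into (\ref{eq22}) for $\mathbb{A}_k$. The nonzero entries of $\mathbb{A}_k$ split into two disjoint families: the ``diagonal'' entries recorded by $M_1$ (condition (i), with $i_2=\cdots=i_m$) and the extra entries of condition (ii), which force the two \emph{distinct} values $\{i_2,\dots,i_m\}=\{r-q-1,r\}$, so that $m\ge 3$ is essential. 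The diagonal family sends $S$ to exactly $S_{t+1}(\mathbb{A}_0,n-1)$, since $\mathbb{A}_0$ consists only of diagonal entries; writing $E_{t+1}$ for the vertices contributed by the extra entries, I obtain $S_{t+1}(\mathbb{A}_k,n-1)=S_{t+1}(\mathbb{A}_0,n-1)\cup E_{t+1}$. The proposition thus reduces to proving $E_{t+1}\subseteq S_{t+1}(\mathbb{A}_0,n-1)$ for every $t\le k-1$.

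The next step is to determine when the extra entries act. By condition (ii), such an entry with all its column indices in $S$ exists precisely when both $r-q-1$ and $r$ lie in $S$; if they do not, then $E_{t+1}=\emptyset$ and there is nothing to prove, so assume $\{r-q-1,r\}\subseteq S$. In that case $E_{t+1}$ is exactly the set of admissible rows, namely the complement modulo $n$ of $\{r-q,r-q+1,\dots,r,r+1\}$. Here I would use the structural description of Proposition \ref{pro214}(i) together with Remark \ref{rem215}: each $S_t(\mathbb{A}_0,n-1)$ is a block of consecutive residues mod $n$ --- an arc of the cycle $1\to 2\to\cdots\to n\to 1$ --- of cardinality $q'+2\le n-1$, where $t=(n-1)q'+r'$. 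Since $r-q-1$ and $r$ are the two endpoints of the arc $S_k(\mathbb{A}_0,n-1)$, and any arc containing two given points must contain one of the two complementary arcs joining them, $S$ contains either the short arc $\{r-q-1,\dots,r\}$ (which equals $S_k(\mathbb{A}_0,n-1)$) or the long arc running from $r$ forward to $r-q-1$.

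The heart of the argument, and the step I expect to be the main obstacle, is to exclude the short-arc possibility when $t<k$ and then to cash in the long-arc one. If $S$ contained $\{r-q-1,\dots,r\}$, a cardinality count forces $q'\ge q$: equality gives $S=S_k(\mathbb{A}_0,n-1)$, hence $t=k$ by the pairwise distinctness in Proposition \ref{pro213}, while $q'\ge q+1$ gives $t\ge (n-1)(q+1)+1>k$; both contradict $t\le k-1$. Therefore $S$ contains the long arc from $r$ to $r-q-1$. Finally, in $\overleftarrow{D(M_1)}$ every vertex $v$ carries the arc $v\to v+1\pmod n$, so by Proposition \ref{pro29} the one-step image satisfies $S_{t+1}(\mathbb{A}_0,n-1)\supseteq\{\,v+1:v\in S\,\}$, which therefore contains the arc from $r+1$ forward to $r-q$. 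Since $E_{t+1}$ is precisely the arc from $r+2$ forward to $r-q-1$ --- the sub-arc obtained by deleting the two end vertices $r+1$ and $r-q$ --- it follows that $E_{t+1}\subseteq S_{t+1}(\mathbb{A}_0,n-1)$, which closes the induction and establishes (\ref{eq31}).
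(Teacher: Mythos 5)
Your proof is correct and follows essentially the same route as the paper's: induction on $t$, a case split according to whether $\{r-q-1,r\}\pmod n\subseteq S_t(\mathbb{A}_0,n-1)$ (i.e.\ whether the extra entries of $\mathbb{A}_k$ can fire), exclusion of the short-arc case via the cardinality formula of Proposition \ref{pro214}(i) together with the pairwise distinctness in Proposition \ref{pro213}, and absorption of the contributed rows $[n]\backslash\{r-q,\ldots,r+1\}\pmod n$ into the shifted long arc inside $S_{t+1}(\mathbb{A}_0,n-1)$. The only cosmetic difference is that you isolate the extra contribution as a set $E_{t+1}$ and reduce everything to $E_{t+1}\subseteq S_{t+1}(\mathbb{A}_0,n-1)$, whereas the paper computes $S_{t+1}(\mathbb{A}_k,n-1)$ directly in each case.
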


\begin{proof}
Now  we prove (\ref{eq31})
holds for any $t\in [k]$  by induction on $t$.

It's obvious that $S_1(\mathbb{A}_k,n-1)=S_1(\mathbb{A}_0,n-1)=\{1,n\}$ from (\ref{eq21}) and $M(\mathbb{A}_k)=M(\mathbb{A}_0)$. Assume that $S_t(\mathbb{A}_k,n-1)=S_t(\mathbb{A}_0,n-1)$ holds for $t<k$.
Now we only need  show that $S_{t+1}(\mathbb{A}_k, n-1)=S_{t+1}(\mathbb{A}_0,n-1)$ holds. 

{\bf Case 1: } $\{r-q-1,r\} \pmod n\not\subseteq S_t(\mathbb{A}_0,n-1)$.

 We note that the recurrence relation (\ref{eq22}) and the definition of $\mathbb{A}_k$, then

\vskip0.2cm

$S_{t+1}(\mathbb{A}_k,n-1)=\{u\in[n] \hskip.08cm|\hskip.08cm \mbox {there exist }  i_2, \ldots, i_m\in S_t(\mathbb{A}_k, n-1) \mbox{ and } a^{(k)}_{ui_2\cdots i_m}>0\}$

\hskip2.85cm$=\{u\in[n]\hskip.08cm |\hskip.08cm \mbox {there exist } i_2, \ldots, i_m\in S_t(\mathbb{A}_{0}, n-1) \mbox{ and } a^{(k)}_{ui_2\cdots i_m}>0\}$

\hskip2.85cm$=\{u\in[n]\hskip.08cm |\hskip.08cm \mbox {there exist }  v\in S_t(\mathbb{A}_{0}, n-1) \mbox{ and } a_{uv\cdots v}>0\}$

\hskip2.9cm$\cup\hskip.08cm\{u\in[n] \hskip.08cm|\hskip.08cm \mbox {there exist }  i_2, \ldots, i_m\in S_t(\mathbb{A}_0, n-1) \mbox{ such that } $

\hskip3.5cm$i_2\cdots i_m\not=i_2\cdots i_2 \mbox{ and } a^{(k)}_{ui_2\cdots i_m}>0\}$

\hskip2.85cm$=S_{t+1}(\mathbb{A}_0,n-1)\hskip.08cm\cup  \varnothing$

\hskip2.85cm$ =S_{t+1}(\mathbb{A}_0,n-1).$
\vskip0.2cm

{\bf Case 2: } $\{r-q-1,r\} \pmod n\subseteq S_t(\mathbb{A}_0,n-1)$.

From the Remark \ref{rem215}, we know that $S_t(\mathbb{A}_0,n-1)$ is a consecutive positive integers subset of $\{1,2,\cdots,n\}$. Thus $$\{r-q-1,r-q,\cdots,r\} \pmod n\subseteq S_t(\mathbb{A}_0,n-1)$$
or
$$\{r,r+1,\cdots,r-q-1\} \pmod n\subseteq S_t(\mathbb{A}_0,n-1)$$
holds.


If $\{r-q-1,r-q,\cdots,r\} \pmod n \subseteq S_t(\mathbb{A}_0,n-1)$,
then $S_k(\mathbb{A}_0,n-1)\subseteq S_t(\mathbb{A}_0,n-1)$
and $|S_t(\mathbb{A}_0,n-1)|=\lfloor\frac{t-1}{n-1}\rfloor+2\leq |S_k(\mathbb{A}_0,n-1)|=q+2$
by the result (i) of Proposition \ref{pro214} and $t<k$. Then  $S_t(\mathbb{A}_0,n-1)=S_k(\mathbb{A}_0,n-1)$,
it implies a contradiction since $S_t(\mathbb{A}_0,n-1)\not=S_k(\mathbb{A}_0,n-1)$ by $t<k$ and Proposition \ref{pro213}.
It follows that $$\{r,r+1,\cdots,r-q-1\}\pmod n\subseteq S_t(\mathbb{A}_0,n-1).$$
Thus by the definition of $\mathbb{A}_0$ and  Propositions \ref{pro29} $\sim$ \ref{pro210}, we have
\begin{equation}\label{eq32}
\{r+1,r+2,\cdots, r-q\}\pmod n\subseteq S_{t+1}(\mathbb{A}_0,n-1).
\end{equation}

Then by the definition of $\mathbb{A}_k$, the assumption and (\ref{eq32}), we can see that
\vskip0.2cm

$S_{t+1}(\mathbb{A}_k,n-1)=\{u\in[n] \hskip.08cm|\hskip.08cm \mbox {there exist }  i_2, \ldots, i_m\in S_t(\mathbb{A}_k, n-1) \mbox{ and } a^{(k)}_{ui_2\cdots i_m}>0\}$

\hskip2.85cm$=\{u\in[n]\hskip.08cm |\hskip.08cm \mbox {there exist }  i_2, \ldots, i_m\in S_t(\mathbb{A}_{0}, n-1) \mbox{ and } a^{(k)}_{ui_2\cdots i_m}>0\}$

\hskip2.85cm$=\{u\in[n]\hskip.08cm |\hskip.08cm \mbox {there exist }  v\in S_t(\mathbb{A}_{0}, n-1) \mbox{ and } a_{uv\cdots v}>0\}$

\hskip2.9cm$\cup\hskip.08cm\{u\in[n] \hskip.08cm|\hskip.08cm \mbox {there exist }  i_2, \ldots, i_m\in S_t(\mathbb{A}_0, n-1) \mbox{ such that } $

\hskip3.5cm$i_2\cdots i_m\not=i_2\cdots i_2 \mbox{ and } a^{(k)}_{ui_2\cdots i_m}>0\}$

\hskip2.85cm$=S_{t+1}(\mathbb{A}_0,n-1)\hskip.08cm\cup \hskip.08cm([n]\backslash\{r-q, r-q+1, \ldots, r,r+1\} \pmod n)$

\hskip2.85cm$ =S_{t+1}(\mathbb{A}_0,n-1)$.
\vskip0.2cm

Combining the above two cases,   the equation (\ref{eq31}) is proved for all $t\in [k]$.
\end{proof}

\begin{proposition}\label{pro32}
Let $k(\geq 1), n(\geq 2), q(\geq 0), r$ be  integers, and $k=(n-1)q+r\in [n^2-3n+2]$  with $1\le r\le n-1$,
$\mathbb{A}_0$ and $\mathbb{A}_k$ defined as above.
Then for any integer $t\in [k]$, 
we have
\begin{equation}\label{eq33}
 S_t(\mathbb{A}_k,j)=S_t(\mathbb{A}_0,j),  \mbox{where } j\in \{0,1,2,\ldots, n-2\}
\end{equation}
\end{proposition}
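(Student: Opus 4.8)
The plan is to run an induction on $t$ that mirrors the proof of Proposition \ref{pro31} almost line for line, but reduces each ``active'' step for the column $j$ back to the corresponding step for the column $n-1$ (which is already settled by Proposition \ref{pro31}) by means of the shift identity of Proposition \ref{pro214}(ii). Fix $j\in\{0,1,\ldots,n-2\}$. The base case $t=1$ is immediate: $S_1(\mathbb{A},j)$ depends only on $M(\mathbb{A})$, and $M(\mathbb{A}_k)=M(\mathbb{A}_0)=M_1$. For the inductive step I assume $S_t(\mathbb{A}_k,j)=S_t(\mathbb{A}_0,j)$ for some $t<k$ and apply the recurrence (\ref{eq22}) to $\mathbb{A}_k$. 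Since the only nonzero entries of $\mathbb{A}_k$ are the ``diagonal'' entries coming from $M_1$ (which coincide with those of $\mathbb{A}_0$, whose off-diagonal entries all vanish) together with the extra entries of its defining condition (ii), I would split
$$S_{t+1}(\mathbb{A}_k,j)=S_{t+1}(\mathbb{A}_0,j)\cup E_t,$$
where $E_t$ is the contribution of the extra entries. Exactly as in Proposition \ref{pro31}, $E_t$ is nonempty precisely when $\{r-q-1,r\}\pmod n\subseteq S_t(\mathbb{A}_0,j)$, in which case $E_t=[n]\setminus\{r-q,r-q+1,\ldots,r,r+1\}\pmod n$, independently of the column index. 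So the argument splits into the same two cases.

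In Case 1, $\{r-q-1,r\}\pmod n\not\subseteq S_t(\mathbb{A}_0,j)$, so $E_t=\varnothing$ and $S_{t+1}(\mathbb{A}_k,j)=S_{t+1}(\mathbb{A}_0,j)$, as wanted. The work lies in Case 2, $\{r-q-1,r\}\pmod n\subseteq S_t(\mathbb{A}_0,j)$. Here I would first observe that this containment forces $t\geq n-j$. Indeed, by Propositions \ref{pro29} and \ref{pro210} the set $S_t(\mathbb{A}_0,j)$ is the set of endpoints of walks of length $t$ from $j$ in $\overleftarrow{D(M_1)}$ (Figure 1); for $1\leq t\leq n-1-j$ the only such walk is the forced path along the cycle, so $S_t(\mathbb{A}_0,j)$ is a singleton. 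Since $0\leq q\leq n-3$ gives $q+1\not\equiv 0\pmod n$, the residues $r-q-1$ and $r$ are distinct mod $n$, and a singleton cannot contain the two-element set $\{r-q-1,r\}$. Hence $t\geq n-j$, and I may set $t'=t-(n-1-j)$, so that $1\leq t'\leq t<k$.

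With $t\geq n-j$ the shift identity of Proposition \ref{pro214}(ii) applies and gives
$$S_t(\mathbb{A}_0,j)=S_{t'}(\mathbb{A}_0,n-1),\qquad S_{t+1}(\mathbb{A}_0,j)=S_{t'+1}(\mathbb{A}_0,n-1).$$
Thus the Case 2 hypothesis becomes $\{r-q-1,r\}\pmod n\subseteq S_{t'}(\mathbb{A}_0,n-1)$ with $t'<k$, which is exactly the situation already analysed in Case 2 of Proposition \ref{pro31}. Reusing that reasoning verbatim — the consecutive-integer interval $S_{t'}(\mathbb{A}_0,n-1)$ (Remark \ref{rem215}) is forced to be $\{r,r+1,\ldots,r-q-1\}\pmod n$, since the alternative $\{r-q-1,\ldots,r\}\pmod n\subseteq S_{t'}(\mathbb{A}_0,n-1)$ would force $S_{t'}(\mathbb{A}_0,n-1)=S_k(\mathbb{A}_0,n-1)$ and contradict Proposition \ref{pro213} via $t'<k$ — one obtains $E_t=[n]\setminus\{r-q,\ldots,r+1\}\pmod n\subseteq S_{t'+1}(\mathbb{A}_0,n-1)=S_{t+1}(\mathbb{A}_0,j)$. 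Therefore $S_{t+1}(\mathbb{A}_k,j)=S_{t+1}(\mathbb{A}_0,j)\cup E_t=S_{t+1}(\mathbb{A}_0,j)$, which closes the induction and proves (\ref{eq33}) for all $t\in[k]$.

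I expect the main obstacle to be Case 2, and specifically the two pieces of bookkeeping it requires: first, showing that the extra entries can only become active once the column-$j$ reachable set is large enough that $t\geq n-j$, so that the shift identity of Proposition \ref{pro214}(ii) is legitimately available (this is what the singleton observation for small $t$ buys, and it is the step that makes the reduction to the already-proven column-$(n-1)$ case clean); and second, after shifting, pinning down the \emph{orientation} of the interval $S_{t'}(\mathbb{A}_0,n-1)$ so that the extra contribution $E_t$ is genuinely swallowed by $S_{t+1}(\mathbb{A}_0,j)$. Carrying the cyclic index arithmetic mod $n$ consistently through both of these is the only delicate point; everything else is a faithful transcription of the proof of Proposition \ref{pro31}.
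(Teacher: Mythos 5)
Your proof is correct, but its engine differs from the paper's in a genuine way. The paper does not case-split on whether the extra entries activate; it splits on the range of $t$. For $1\le t\le n-1$ it computes $S_t(\mathbb{A}_0,j)$ explicitly (a singleton $\{j+t\}$, or a pair $\{t+j-n,\,t+j-n+1\}\pmod n$) and checks that $\{r-q-1,r\}\pmod n$ can never sit inside such a set, so the recurrence (\ref{eq22}) gives equality with empty extra contribution throughout that range. For $n\le t\le k$ it computes the base case $t=n$ directly and then, in the inductive step, never touches the recurrence or the extra entries at all: from the hypothesis $S_t(\mathbb{A}_k,j)=S_t(\mathbb{A}_0,j)$ it chains Proposition \ref{pro214}(ii) and Proposition \ref{pro31} to get $S_t(\mathbb{A}_k,j)=S_{t-(n-1-j)}(\mathbb{A}_k,n-1)$, advances both sides one step by the shift-invariance of Proposition \ref{pro28}(i), and unwinds with Proposition \ref{pro31} and Proposition \ref{pro214}(ii) again. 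You instead run one uniform induction over all $t\in[k]$, split at each step on whether $\{r-q-1,r\}\pmod n\subseteq S_t(\mathbb{A}_0,j)$, prove that activation forces $t\ge n-j$ (your singleton observation, which replaces the paper's explicit formulas and its base case at $t=n$), and then re-run the orientation/containment analysis of Proposition \ref{pro31}'s Case 2 after shifting to column $n-1$; the contradiction via Proposition \ref{pro213} and the absorption $[n]\setminus\{r-q,\ldots,r+1\}\pmod n\subseteq S_{t'+1}(\mathbb{A}_0,n-1)$ go through verbatim since $t'\le t<k$. Both arguments are sound and rest on the same external facts (Proposition \ref{pro31}, Proposition \ref{pro214}, Remark \ref{rem215}, Proposition \ref{pro213}); the paper's route buys brevity in the large-$t$ regime, since Proposition \ref{pro28}(i) makes any re-examination of the extra entries unnecessary, while your route buys uniformity and makes explicit the combinatorial reason the extra entries are harmless for every column: whenever they fire, their output is already swallowed by $S_{t+1}(\mathbb{A}_0,j)$.
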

\begin{proof}
Now we show (\ref{eq33})
holds for any  $t\in [k]$ by the following two cases.

 {\bf Case 1: } $1\le t\le n-1$.

 Now  we prove (\ref{eq33}) holds for  any  $t \hskip.1cm (1\leq t\leq n-1)$  by induction on $t$.

Clearly, $S_1(\mathbb{A}_k,j)=\{j+1\}=S_1(\mathbb{A}_0,j)$ for any  $j\in \{0,1,2,\ldots, n-2\}$.
Now we assume that $S_t(\mathbb{A}_k,j)=S_t(\mathbb{A}_0,j)$ holds for any  $j\in \{0,1,2,\ldots, n-2\}$ and $t<n-1$.
Then we show that $S_{t+1}(\mathbb{A}_k,j)=S_{t+1}(\mathbb{A}_0,j)$ holds for any  $j\in \{0,1,2,\ldots, n-2\}$.

By observing and the result  (ii) of Proposition \ref{pro214}, we know for any $t$ with  $2\le t\leq  n-1$,

$S_t(\mathbb{A}_0,j)$

\noindent $=\left\{\begin{array}{ll}
                               \{j+t\}, & j\in\{0,1,\ldots, n-t-1\}; \\
                               S_{t-(n-1-j)}(\mathbb{A}_0,n-1)=\{t+j-n, t+j-n+1\}\pmod n, & j\in\{n-t, \ldots, n-2\}.
                             \end{array}
                             \right.$

We note that $(t+j-n+1)-(t+j-n)\equiv 1\pmod n$,
but $(r-q-1)-r\not\equiv 1\pmod n$  by $0\leq q\leq n-3$.
Thus  $\{r-q-1,r\}\pmod n\subseteq S_t(\mathbb{A}_0,j)$  if and only if $q=0$ and $r=k$,
then $\{r-q-1,r\}\pmod n=\{k-1,k\}\pmod n$. It is a contradiction by the fact $1\leq t+j-n+1<t\leq k$ for $j\in\{n-t, \ldots, n-2\}$.

The above arguments imply  that $\{r-q-1,r\}\pmod n\not\subseteq S_t(\mathbb{A}_0,j)$ for $j=0,1,\cdots,n-2$.
Then by the definition of $\mathbb{A}_k$, and the assumption, we can see that

$S_{t+1}(\mathbb{A}_k,j)=\{u\in[n] \hskip.08cm|\hskip.08cm \mbox {there exist }  i_2, \ldots, i_m\in S_t(\mathbb{A}_k, j) \mbox{ and } a^{(k)}_{ui_2\cdots i_m}>0\}$

\hskip2.05cm$=\{u\in[n]\hskip.08cm |\hskip.08cm \mbox {there exist }  i_2, \ldots, i_m\in S_t(\mathbb{A}_{0}, j) \mbox{ and } a^{(k)}_{ui_2\cdots i_m}>0\}$

\hskip2.05cm$=\{u\in[n]\hskip.08cm |\hskip.08cm \mbox {there exist }  v\in S_t(\mathbb{A}_{0}, j) \mbox{ and } a_{uv\cdots v}>0\}$

\hskip2.1cm$\cup\hskip.08cm\{u\in[n] \hskip.08cm|\hskip.08cm \mbox {there exist }  i_2, \ldots, i_m\in S_t(\mathbb{A}_0, j) \mbox{ such that } $

\hskip2.25cm$i_2\cdots i_m\not=i_2\cdots i_2 \mbox{ and } a^{(k)}_{ui_2\cdots i_m}>0\}$

\hskip2.05cm$=S_{t+1}(\mathbb{A}_0,j)\hskip.08cm\cup \varnothing$

\hskip2.05cm$ =S_{t+1}(\mathbb{A}_0,j)$.
\vskip0.2cm

{\bf Case 2: } $t\geq n$.

Now  we prove (\ref{eq33}) holds for  any  $n\leq t\leq k$  by induction on $t$.

By (ii) of Proposition \ref{pro214}, $S_n(\mathbb{A}_0,j)=S_{j+1}(\mathbb{A}_0,n-1)=\{j,j+1\}\pmod n$ for  any $j\in \{0,1,\ldots, n-2\}$.
On the other hand, by Proposition \ref{pro210}, the result of  Case 1 and (\ref{eq22}), we have $$S_{n-1}(\mathbb{A}_{0}, j)=\left\{\begin{array}{ll}
                                                                                            \{n-1\}, & j=0; \\
                                                                                            \{j-1,j\}\pmod n, & 1\leq j\leq n-2.
                                                                                          \end{array}
                                                                                          \right.$$
and

$S_{n}(\mathbb{A}_k,j)=\{u\in[n] \hskip.08cm|\hskip.08cm \mbox {there exist }  i_2, \ldots, i_m\in S_{n-1}(\mathbb{A}_k, j) \mbox{ and } a^{(k)}_{ui_2\cdots i_m}>0\}$

\hskip1.75cm$=\{u\in[n]\hskip.08cm |\hskip.08cm \mbox {there exist }  i_2, \ldots, i_m\in S_{n-1}(\mathbb{A}_{0}, j) \mbox{ and } a^{(k)}_{ui_2\cdots i_m}>0\}$

\hskip1.75cm$=\left\{\begin{array}{ll}
                                                                                            \{1,n\}, & j=0; \\
                                                                                            \{j,j+1\}\pmod n, & 1\leq j\leq n-2.
                                                                                          \end{array}
                                                                                          \right.$

\hskip1.75cm$=\{j,j+1\}\pmod n.$

Thus  $S_{n}(\mathbb{A}_k,j) =S_{n}(\mathbb{A}_0,j)$.
\vskip0.2cm

Now we assume that $S_t(\mathbb{A}_k,j)=S_t(\mathbb{A}_0,j)$ holds for $n\le t<k$ for any $j\in \{0,1,\ldots, n-2\}$.
Then we only need show that $S_{t+1}(\mathbb{A}_k,j)=S_{t+1}(\mathbb{A}_0,j)$ holds for any  $j\in \{0,1,\ldots, n-2\}$.
By the assumption, the fact that $1\leq t-(n-1-j)<k$, (ii) of  Proposition \ref{pro214} and  Proposition \ref{pro31}, we have

$$S_t(\mathbb{A}_k,j)=S_t(\mathbb{A}_0,j)=S_{t-(n-1-j)}(\mathbb{A}_0,n-1)=S_{t-(n-1-j)}(\mathbb{A}_k,n-1).$$

Thus  by (i) of Proposition \ref{pro28}, Proposition \ref{pro31} and (ii) of Proposition \ref{pro214}, we have
\begin{align*}
S_{t+1}(\mathbb{A}_k,j)&=S_{t+1-(n-1-j)}(\mathbb{A}_k,n-1) \\
                       &=S_{t+1-(n-1-j)}(\mathbb{A}_0,n-1) \\
                       &=S_{t+1}(\mathbb{A}_0,j) .
\end{align*}

Combining the above two cases, we complete the proof.
\end{proof}

\begin{theorem}\label{thm33}
Let $k$ be a positive integer with $1\le k\le n^2-3n+2$. Then

{\rm (i). } $\gamma_j({\mathbb{A}_k})=k+n-j$ where  $j=0,1,\cdots, n-1.$

{\rm (ii). } $\gamma(\mathbb{A}_k)=k+n.$
\end{theorem}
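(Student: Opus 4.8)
The plan is to establish (i) first at the distinguished column $j=n-1$, then transport it to every other $j$ by a single shift, and finally deduce (ii) from Remark \ref{rem211}. The two facts I rely on repeatedly are that $\gamma_j(\mathbb{A}_k)$ equals the least $t$ with $S_t(\mathbb{A}_k,j)=[n]$ and that this equality, once it holds, persists for all larger $t$ (Proposition \ref{pro28}(ii),(iii)).

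\emph{The case $j=n-1$.} Here the claim is $\gamma_{n-1}(\mathbb{A}_k)=k+1$. For the lower bound, Proposition \ref{pro31} gives $S_k(\mathbb{A}_k,n-1)=S_k(\mathbb{A}_0,n-1)$, and by Proposition \ref{pro214}(i) this set has cardinality $q+2\le(n-3)+2=n-1<n$; hence it is not $[n]$ and $\gamma_{n-1}(\mathbb{A}_k)>k$. For the upper bound I compute $S_{k+1}(\mathbb{A}_k,n-1)$ from $S_k(\mathbb{A}_k,n-1)=S_k(\mathbb{A}_0,n-1)=\{r-q-1,\dots,r\}\pmod n$ through the recurrence (\ref{eq22}), separating the contribution of the majorization entries of $\mathbb{A}_k$ from that of the extra entries in item (ii) of the definition of $\mathbb{A}_k$. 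The majorization part reproduces $S_{k+1}(\mathbb{A}_0,n-1)$, while the extra entries fire precisely because $\{r-q-1,r\}\subseteq S_k(\mathbb{A}_0,n-1)$ (a tuple $(i_2,\dots,i_m)$ with entry-set $\{r-q-1,r\}$ exists since $m\ge3$) and contribute exactly $[n]\setminus\{r-q,\dots,r+1\}\pmod n$. A direct check from Proposition \ref{pro214}(i) shows $S_{k+1}(\mathbb{A}_0,n-1)\supseteq\{r-q,\dots,r+1\}\pmod n$, so the union of the two contributions is all of $[n]$ and $\gamma_{n-1}(\mathbb{A}_k)=k+1=k+n-(n-1)$.

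\emph{The case $j\in\{0,\dots,n-2\}$.} The heart of the argument is the seed identity $S_{n-j}(\mathbb{A}_k,j)=S_1(\mathbb{A}_k,n-1)$. To prove it I show the extra entries of $\mathbb{A}_k$ stay dormant during the first $n-j$ steps from $j$: by Propositions \ref{pro29}, \ref{pro210} (or the explicit formula in the proof of Proposition \ref{pro32}) one has $S_t(\mathbb{A}_0,j)=\{j+t\}$ for $1\le t\le n-1-j$, a singleton, which cannot contain the two-element set $\{r-q-1,r\}\pmod n$; hence step by step $S_t(\mathbb{A}_k,j)=S_t(\mathbb{A}_0,j)$ for $1\le t\le n-j$, and by Proposition \ref{pro214}(ii) the value at $t=n-j$ is $S_1(\mathbb{A}_0,n-1)=S_1(\mathbb{A}_k,n-1)$. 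Feeding this seed into Proposition \ref{pro28}(i) yields $S_{n-j+r}(\mathbb{A}_k,j)=S_{1+r}(\mathbb{A}_k,n-1)$ for all $r\ge0$. Taking $r=k$ gives $S_{k+n-j}(\mathbb{A}_k,j)=S_{k+1}(\mathbb{A}_k,n-1)=[n]$, and taking $r=k-1$ (or, when $k=1$, the seed itself) gives $S_{k+n-j-1}(\mathbb{A}_k,j)=S_k(\mathbb{A}_k,n-1)\ne[n]$; together with the persistence from Proposition \ref{pro28}(ii) this forces $\gamma_j(\mathbb{A}_k)=k+n-j$.

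Part (ii) is then immediate: by Remark \ref{rem211}, $\gamma(\mathbb{A}_k)=\max_{0\le j\le n-1}\gamma_j(\mathbb{A}_k)=\max_{0\le j\le n-1}(k+n-j)=k+n$, the maximum occurring at $j=0$. I expect the genuine obstacle to be the $j=n-1$ step: the whole construction of $\mathbb{A}_k$ is engineered so that at step $k+1$ the ordinary majorization dynamics and the single batch of extra entries cover complementary arcs of $\{1,\dots,n\}$, and verifying that these two arcs tile $[n]$ requires careful modular bookkeeping of $S_{k+1}(\mathbb{A}_0,n-1)$ (with the mild case split $r\le n-2$ versus $r=n-1$). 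By comparison the dormancy argument seeding the shift is routine, resting only on the observation that the relevant intermediate sets are singletons.
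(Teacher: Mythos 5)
Your proof is correct, and its skeleton is the same as the paper's: compute $\gamma_{n-1}(\mathbb{A}_k)=k+1$ by splitting the recurrence (\ref{eq22}) at step $k+1$ into the majorization contribution $S_{k+1}(\mathbb{A}_0,n-1)$ and the extra-entry contribution $[n]\setminus\{r-q,\ldots,r+1\}\pmod n$, transport the result to $j<n-1$ by a shift through Proposition \ref{pro28}(i), and take the maximum via Remark \ref{rem211}. The genuine difference is how the seed identity $S_{n-j}(\mathbb{A}_k,j)=S_1(\mathbb{A}_k,n-1)$ is justified. The paper obtains its chain $S_1(\mathbb{A}_k,n-1)=S_2(\mathbb{A}_k,n-2)=\cdots=S_n(\mathbb{A}_k,n)=\{1,n\}$ by citing Propositions \ref{pro31}$\sim$\ref{pro32}, but Proposition \ref{pro32} is stated only for $t\in[k]$, whereas the chain needs indices $t$ up to $n$; so that citation is incomplete whenever $k<n$, and the restriction $t\le k$ is not removable (for $n=4$, $k=1$ one checks $S_3(\mathbb{A}_1,2)=[4]\neq\{1,2\}=S_3(\mathbb{A}_0,2)$, since the extra entries fire at $t=2$). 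Your singleton-dormancy argument --- the extra entries cannot fire as long as $S_t(\mathbb{A}_0,j)=\{j+t\}$ is a singleton, because they require the two-element set $\{r-q-1,r\}\pmod n$ --- establishes the seed directly for every $k\ge1$, so your route actually repairs this defect of the paper's proof; likewise your cardinality-plus-persistence lower bound at $j=n-1$ is a sound, slightly cleaner substitute for the paper's appeal to Proposition \ref{pro213}. One boundary caveat: when $k=n^2-3n+2$, the index $k+1$ lies outside the range of Proposition \ref{pro214}(i), so your ``direct check'' of $S_{k+1}(\mathbb{A}_0,n-1)\supseteq\{r-q,\ldots,r+1\}\pmod n$ must there be replaced by Proposition \ref{pro212} together with Proposition \ref{pro28}(iii) (which give $S_{k+1}(\mathbb{A}_0,n-1)=[n]$), or by the one-step walk extension in $\overleftarrow{D(M_1)}$; the paper's citation of Proposition \ref{pro214}(i) at this index has the same blemish.
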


\begin{proof}Firstly, we show  $\gamma_{n-1}({\mathbb{A}_k})=k+1.$
By Proposition \ref{pro31} and (i) of  Proposition \ref{pro214}, we have
$$S_{k}(\mathbb{A}_k,n-1)=S_{k}(\mathbb{A}_0,n-1)=\{r-q-1,r-q,\cdots,r\}\pmod n.$$

 \noindent And by Proposition \ref{pro210} and  (i) of  Proposition \ref{pro214}, we have
  $$\{r-q,r-q+1,\cdots,r+1\}\pmod n\subseteq S_{k+1}(M(\mathbb{A}_0),n-1)=S_{k+1}(\mathbb{A}_0,n-1).$$

\noindent   Thus by Proposition \ref{pro31}, we have

$S_{k+1}(\mathbb{A}_k,n-1)=\{u\in[n] \hskip.08cm|\hskip.08cm \mbox {there exist }  i_2, \ldots, i_m\in S_k(\mathbb{A}_k, n-1) \mbox{ and } a^{(k)}_{ui_2\cdots i_m}>0\}$

\hskip2.85cm$=\{u\in[n]\hskip.08cm |\hskip.08cm \mbox {there exist }  i_2, \ldots, i_m\in S_k(\mathbb{A}_{0}, n-1) \mbox{ and } a^{(k)}_{ui_2\cdots i_m}>0\}$

\hskip2.85cm$=\{u\in[n]\hskip.08cm |\hskip.08cm \mbox {there exist }  v\in S_k(\mathbb{A}_{0}, n-1) \mbox{ and } a_{uv\cdots v}>0\}$

\hskip2.90cm$\cup\hskip.08cm\{u\in[n] \hskip.08cm|\hskip.08cm \mbox {there exist }  i_2, \ldots, i_m\in S_k(\mathbb{A}_0, n-1) \mbox{ such that } $

\hskip3.5cm$i_2\cdots i_m\not=i_2\cdots i_2 \mbox{ and } a^{(k)}_{ui_2\cdots i_m}>0\}$

\hskip2.85cm$=S_{k+1}(\mathbb{A}_0,n-1)\hskip.08cm\cup \hskip.08cm ([n]\backslash\{r-q, r-q+1, \ldots, r,r+1\} \pmod n) $

\hskip2.85cm$=[n]$


On the other hand,  we have $S_t(\mathbb{A}_k,n-1)=S_t(\mathbb{A}_0,n-1)\subsetneqq[n]$ holds for all $1\leq t\leq k$
by Proposition \ref{pro31} and  Proposition \ref{pro213}.
Thus $\gamma_{n-1}(\mathbb{A}_k)=k+1$ by (iii) of Proposition \ref{pro28}.

Now we show  $\gamma_j({\mathbb{A}_k})=k+n-j$  for any $j\in\{0,1,\ldots, n-2\}$.

By Propositions \ref{pro31}$\sim$ \ref{pro32}, we have 
$$S_1(\mathbb{A}_k, n-1)=S_2(\mathbb{A}_k, n-2)=S_3(\mathbb{A}_k, n-3)=\cdots =S_{n-1}(\mathbb{A}_k, 1)=S_n(\mathbb{A}_k, n)=\{1,n\}.$$
Thus by (i) of Proposition \ref{pro28} and the definition of $j$-primitive degree, we have

$S_1(\mathbb{A}_k, n-1)=S_2(\mathbb{A}_k, n-2)$

\noindent $\Longrightarrow
\left\{\begin{array}{ll}
   ([n]\not=)S_{1+r}(\mathbb{A}_k, n-1)=S_{2+r}(\mathbb{A}_k, n-2), & \mbox{ if } 1\le r\le k-1; \\
    ([n]=)S_{1+k}(\mathbb{A}_k, n-1)=S_{2+k}(\mathbb{A}_k, n-2), &     \mbox{ if } r=k.
      \end{array}\right.$

\noindent $\Longrightarrow \gamma_{n-2}(\mathbb{A}_k)=k+2$.

Similarly, we can prove $\gamma_{n-3}(\mathbb{A}_k)=k+3,  \ldots,  \gamma_{1}(\mathbb{A}_k)=k+n-1, \gamma_{0}(\mathbb{A}_k)=k+n.$

Combining the above arguments, we obtain  $\gamma_j({\mathbb{A}_k})=k+n-j$ for any  $j\in \{0,1,\ldots, n-1\}.$

By Proposition \ref{pro24} or Remark \ref{rem211},   we have
$$\gamma(\mathbb{A}_k)=\max\limits_{0\le j\le n-1}\{\gamma_j(\mathbb{A}_k)\}=\gamma_0(\mathbb{A}_k)=k+n,$$
and we complete the proof of (ii) immediately.
\end{proof}

\begin{theorem}\label{thm34}
Let  $m, n$ be positive integers with $m\ge 3, n\ge 2$. Then $E(m,\,n)=[(n-1)^2+1]$.
\end{theorem}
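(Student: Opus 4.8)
The plan is to prove the two inclusions $E(m,n)\subseteq [(n-1)^2+1]$ and $[(n-1)^2+1]\subseteq E(m,n)$ separately. The first is immediate from Theorem \ref{thm18}, which bounds the primitive degree of every primitive tensor of order $m$ and dimension $n$ by $(n-1)^2+1$, so only the reverse inclusion requires work: I must realize each integer $\ell\in\{1,2,\ldots,(n-1)^2+1\}$ as $\gamma(\mathbb{A})$ for some primitive tensor $\mathbb{A}$ of order $m$ and dimension $n$. I would split the target interval into the ``large'' part $\{n+1,\ldots,(n-1)^2+1\}$ and the ``small'' part $\{1,\ldots,n\}$, and treat them by different devices.

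For the large part, Theorem \ref{thm33}(ii) does essentially all the work. Given $\ell$ with $n+1\le \ell\le (n-1)^2+1$, set $k=\ell-n$, so that $1\le k\le n^2-3n+2$; the tensor $\mathbb{A}_k$ constructed in Section 3 is primitive, since its majorization matrix is $M_1$, which is primitive, and Theorem \ref{thm33}(ii) gives $\gamma(\mathbb{A}_k)=k+n=\ell$. Hence each such $\ell$ lies in $E(m,n)$. The two endpoints are consistent, as $\mathbb{A}_{n^2-3n+2}$ attains $(n-1)^2+1$, matching $\gamma(\mathbb{A}_0)$.

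For the small part I would pass through the majorization matrix. For each $\ell\in\{1,\ldots,n\}$ take a nonnegative primitive matrix $B$ of order $n$ with $\gamma(B)=\ell$, and form the tensor $\mathbb{A}$ of order $m$ and dimension $n$ whose majorization matrix is $M(\mathbb{A})=B$ and which satisfies the ``diagonal'' condition $a_{ii_2\cdots i_m}=0$ whenever $i_2\cdots i_m\neq i_2\cdots i_2$. Then $\mathbb{A}$ is primitive by Proposition \ref{pro26}, and by Proposition \ref{pro27} its primitive degree equals that of its majorization matrix, so $\gamma(\mathbb{A})=\gamma(B)=\ell$. This reduces the small part to the purely matrix-theoretic statement $\{1,2,\ldots,n\}\subseteq E_n$. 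Assembling the large and small parts then yields $[(n-1)^2+1]\subseteq E(m,n)$.

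I expect the main obstacle to be precisely the claim $\{1,\ldots,n\}\subseteq E_n$ for every $n\ge 2$, which is where the classical theory of the matrix exponent set enters. For $n\ge 4$ one has $n\le \lfloor\tfrac12 w_n\rfloor$, so the segment $\{1,\ldots,n\}$ lies entirely below the first gap; by the Lewin--Vitek--Shao--Zhang determination of $E_n$ (\cite{LV81,Sh85,Zh87}) this initial segment contains no gaps, and the sole anomaly $48\notin E_{11}$ is harmless here since $48>11$. The two remaining cases $n=2$ and $n=3$ I would dispatch by hand: the all-ones matrix realizes exponent $1$, the all-ones matrix with one diagonal entry set to zero realizes exponent $2$, Wielandt's matrix realizes the top value $w_2=2$ when $n=2$, and an explicit matrix (for instance a Hamiltonian $3$-cycle carrying loops at two of its three vertices) realizes exponent $3$ when $n=3$. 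Thus $\{1,\ldots,n\}\subseteq E_n$ in all cases, and combined with the $\mathbb{A}_k$ family this establishes $E(m,n)=[(n-1)^2+1]$.
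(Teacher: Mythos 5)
Your proposal is correct and follows essentially the same route as the paper: the interval $[(n-1)^2+1]$ is split into $\{1,\ldots,n\}$, realized by lifting primitive matrices $A_t$ with $\gamma(A_t)=t$ to tensors via the diagonal construction and Propositions \ref{pro26}--\ref{pro27}, and $\{n+1,\ldots,(n-1)^2+1\}$, realized by the tensors $\mathbb{A}_{t-n}$ through Theorem \ref{thm33}(ii). The only difference is that you substantiate the claim $\{1,\ldots,n\}\subseteq E_n$ (via Lewin--Vitek--Shao--Zhang for $n\ge 4$ and explicit matrices for $n=2,3$), which the paper simply declares ``well known''; this is a harmless and correct elaboration.
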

\begin{proof} Let $t$ be any positive integer with $1\le t\le (n-1)^2+1$, we will complete the proof by constructing primitive tensor $\mathbb{B}_t$ with order $m$ and dimension $n$ such that $\gamma(\mathbb{B}_t)=t$. We consider the following two cases.

{\bf Case 1: }  $1\le t\le n$.

It is well known that there exists a primitive matrix $A_t$ of order $n$ such that $\gamma(A_t)=t$.
We define the tensor $\mathbb{B}_t$
to be  the nonnegative primitive  tensor with order $m$ and dimension $n$ such that
  $(\mathbb{B}_t)_{ii_2\cdots i_m}=0$ if $i_2\cdots i_m\not=i_2\cdots i_2$  for any $i\in [n]$, and $M(\mathbb{B}_t)=A_t$.
  Then $\gamma(\mathbb{B}_t)=\gamma(A_t)=t$ by Proposition \ref{pro27}.

 {\bf Case 2: } $n+1\le t\le (n-1)^2+1$.

 We choose $\mathbb{B}_t=\mathbb{A}_{t-n}$. Then $\gamma(\mathbb{B}_t)=(t-n)+n=t$ by Theorem \ref{thm33}. 
 \end{proof}

\section{Some open problems for further research}
\hskip.6cm In this section, we  will  propose some interesting  open problems for further  research.

Let $\mathbb{A}$ be a nonnegative primitive tensor  with order $m$ and dimension $n$. By Theorem \ref{thm18},
we have $\gamma(\mathbb{A})\leq (n-1)^2+1$. It is well-known that when $m=2$,
$\gamma(\mathbb{A})= (n-1)^2+1$ if and only if $\mathbb{A}\cong M_1$.
But the case of  $m\geq 3$ is totally different. In fact, we know $\mathbb{A}_{n^2-3n+2}\not\cong \mathbb{A}_{0}$,
but $\gamma(\mathbb{A}_{n^2-3n+2})=\gamma(\mathbb{A}_{0})=(n-1)^2+1$ by Proposition \ref{pro27} and Theorem \ref{thm33}.

 \begin{problem}\label{pro41}
Let $\mathbb{A}$ be a nonnegative  primitive tensor  with order $m$ and dimension $n$.
If $\gamma(\mathbb{A})=(n-1)^2+1$,  can we give a characterization of such $\mathbb{A}$?
\end{problem}

Let $k$ be a positive integer with $1\leq k\leq n^2-3n+2$, $\mathbb{A}_k$ defined as Section 3.
Then by Theorem \ref{thm33}, we have
\begin{equation}\label{eq41}
\{\gamma_1(\mathbb{A}_k), \gamma_2(\mathbb{A}_k), \ldots, \gamma_{n-1}(\mathbb{A}_k),\gamma_n(\mathbb{A}_k)\}
=\{k+1, k+2, \ldots, k+n\}=[k+1,k+n]^o.
\end{equation}
Based on (\ref{eq41}), we propose the following question.

\begin{problem}\label{pro42}
Let $\mathbb{A}$ be a nonnegative primitive tensor  with order $m$ and dimension $n$.
Does there exist two positive integers $a,b$ with $a<b$ such that
$\{\gamma_1(\mathbb{A}), \gamma_2(\mathbb{A}), \ldots, \gamma_{n-1}(\mathbb{A}),\gamma_n(\mathbb{A})\}$
$=[a,b]^o?$
 \end{problem}

Now we recall the definition of reducibility.


\begin{definition}{\rm (\cite{Ch1}, Definition 2.1)}\label{defn43}
 A tensor $\mathbb{C}=(c_{i_1\ldots i_m})$ of order $m$ dimension $n$ is
called reducible, if there exists a nonempty proper index subset $I\subset \{1, \ldots, n\}$
such that $$c_{i_1\ldots i_m}=0 \hskip.3cm \forall i_1\in I, \forall i_2,\ldots i_m\not\in I.$$
If $\mathbb{C}$ is not reducible, then we call $\mathbb{C}$ irreducible.
\end{definition}

When $m=2$, Definition \ref{defn43} give the definition of reducible matrices and irreducible matrices.
It is well-known from the basic relations between matrices and digraphs that
a square matrix $A$ is irreducible if and only if its associated digraph $D(A)$ is strongly connected,
a matrix $A$ is primitive if and only if $A$ is irreducible and the period of $A$ is 1.
It is also well-known that $A$ is primitive if and only if $D(A)$ is primitive,
  and  a digraph $D$ is primitive if and only if $D$ is strongly connected and the greatest  common divisor of
the lengths of all the cycles of $D$ is 1.   Then by the definitions of primitive matrices,
irreducible matrices and $j$-primitive, we obtain the following characterization.

\begin{proposition}\label{pro44}
Let $A$ be a nonnegative matrix of order $n$. Then $A$ is primitive if and only if $A$ is irreducible and there exists
some $j\in [n]$ such that $A$ is $j$-primitive.
\end{proposition}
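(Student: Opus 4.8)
The plan is to translate both implications into the language of the associated digraph $D(A)$ and to lean on the two standard facts recalled immediately before the statement: that $A$ is primitive if and only if $D(A)$ is primitive, and that a digraph is primitive if and only if it is strongly connected and the greatest common divisor of the lengths of all its cycles equals $1$. Throughout I use that for a matrix $M(A^k)=A^k$, so that $j$-primitivity of $A$ (Definition \ref{defn23} specialized to $m=2$) means there is a positive integer $k$ with $(A^k)_{uj}>0$ for all $u\in[n]$; equivalently, in $D(A)$ every vertex $u$ admits a walk of length exactly $k$ terminating at the vertex $j$, since $(A^k)_{uj}>0$ counts precisely such walks.

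For the ``only if'' direction, suppose $A$ is primitive, so $A^k>0$ for some $k$. Then every entry of $A^k$ is positive, whence $(A^k)_{uj}>0$ for all $u\in[n]$ and each fixed $j$; thus $A$ is $j$-primitive for every $j\in[n]$, and in particular for some $j$. Moreover $A^k>0$ means $D(A)$ contains a walk from any vertex to any other, so $D(A)$ is strongly connected and $A$ is irreducible. This settles the forward implication with no further work.

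For the converse, assume $A$ is irreducible and $j$-primitive for some $j$. Irreducibility gives that $D(A)$ is strongly connected; let $h$ be its period, that is, the greatest common divisor of its cycle lengths. By the standard cyclic (Frobenius) decomposition of a strongly connected digraph of period $h$, the vertex set partitions into classes $V_0,V_1,\ldots,V_{h-1}$ so that every arc runs from $V_i$ to $V_{i+1\pmod h}$; consequently any walk of a fixed length $\ell$ beginning in $V_a$ must end in $V_{a+\ell\pmod h}$. Fix $j\in V_b$ and let $k$ witness $j$-primitivity, so every vertex $u$ has a walk of length $k$ to $j$. If $u\in V_a$, such a walk ends in $V_{a+k\pmod h}$, which must coincide with $V_b$; hence every vertex $u$ lies in the single class $V_{b-k\pmod h}$. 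Since the classes partition $[n]$, this forces $h=1$, so $D(A)$ is primitive and therefore $A$ is primitive.

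I expect the only nontrivial point to be the converse, and within it the single ingredient not immediate from the definitions is the cyclic class decomposition of a strongly connected digraph of period $h$; I would invoke it as the standard Frobenius normal-form fact rather than reprove it. Everything else is a direct bookkeeping of walk lengths modulo $h$, which is routine.
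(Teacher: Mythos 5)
Your proof is correct, but it takes a genuinely different route from the paper's. For the nontrivial (sufficiency) direction, the paper argues by direct walk concatenation: given any $u,v\in[n]$, it first takes a path of some length $l_1\le n-1$ from $j$ to $v$, then pads $u$ with an arbitrary walk of length $n-1-l_1$ ending at some vertex $w$ (possible since strong connectivity forces every vertex to have positive outdegree), then inserts the length-$k$ walk from $w$ to $j$ guaranteed by $j$-primitivity. This produces a walk of the uniform length $k+n-1$ from every $u$ to every $v$, so $A^{k+n-1}>0$ and $A$ is primitive; as a by-product the paper's argument yields the explicit quantitative bound $\gamma(A)\le \gamma_j(A)+n-1$. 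You instead verify the gcd criterion: using the cyclic (Frobenius) class decomposition $V_0,\ldots,V_{h-1}$ of a strongly connected digraph of period $h$, you observe that a length-$k$ walk into the fixed class of $j$ pins down the class of its starting vertex modulo $h$, so $j$-primitivity forces all vertices into one class and hence $h=1$. Your argument is clean and conceptually transparent, but it leans on a structure theorem (the cyclic decomposition, including the fact that all $h$ classes are nonempty, which is what lets you conclude $h=1$) that the paper neither states nor needs, and it produces no bound on the exponent; the paper's argument is more elementary, using only strong connectivity and walk bookkeeping, and is constructive. Both proofs are valid, and your reduction of primitivity to the period computation is a legitimate alternative given that the paper itself recalls the characterization of primitive digraphs via strong connectivity plus gcd of cycle lengths equal to $1$.
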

\begin{proof}
Necessity is obvious. So we omit it. Now we only show sufficiency.
Let $\gamma_j(A)=k$, it implies that $(A^k)_{uj}>0$ for any $u\in [n]$, thus there exists a walk of length $k$ from $u$ to $j$ in the associated digraph $D(A)$ by the relation between matrices and digraphs.
Now we show there exists a positive  integer $l$ such that $(A^l)_{uv}>0$  for any $u,v\in [n]$,
that is, we need show there exists a walk of length $l$ from $u$ to $v$ in the associated digraph $D(A)$ for any $u,v\in V(D(A))$.

Since $A$ is irreducible, $D(A)$ is strong connected. Then for any $v$,
there exists a path $P$ with length $l_1(0\leq l_1\leq n-1)$ from $j$ to $v$.
 Similarly, there exists $w$ such that there exists a walk $Q$ with length $n-1-l_1$ from $u$ to $w$.
 Since $\gamma_j(A)=k$, then there exists a walk $R$ with length $k$ from $w$ to $j$.
 Thus the  walk $Q+R+P$  is a walk of  length $(n-1-l_1)+k+l_1=k+n-1$ from $u$ to $v$.
 Let $l=k+n-1$, we complete the proof.
\end{proof}

Based on Proposition \ref{pro44}, we propose the following conjecture.

 \begin{conjecture}\label{con45}
Let $\mathbb{A}$ be a nonnegative  tensor  with order $m$ and dimension $n$.
Then $\mathbb{A}$ is primitive if and only if $\mathbb{A}$ is irreducible and there exists
some $j\in [n]$ such that $\mathbb{A}$ is $j$-primitive.
\end{conjecture}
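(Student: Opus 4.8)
The plan is to treat the two directions separately and, for the hard direction, to work entirely with the reachability sets $S_k(\mathbb{A},j)$ and the one-step operator $F(S)=\{u\in[n]\mid \exists\, i_2,\ldots,i_m\in S \text{ with } a_{ui_2\cdots i_m}>0\}$, so that $S_{k+1}(\mathbb{A},j)=F(S_k(\mathbb{A},j))$ by the recurrence (\ref{eq22}). For necessity, if $\mathbb{A}$ is primitive then every $\gamma_j(\mathbb{A})$ is finite by Proposition \ref{pro24}, so $\mathbb{A}$ is $j$-primitive for each $j$; and $\mathbb{A}$ must be irreducible, since if some nonempty proper $I$ witnessed reducibility then, for any $j\notin I$, an easy induction using (\ref{eq22}) shows $S_k(\mathbb{A},j)\subseteq [n]\setminus I$ for all $k$ (no index $u\in I$ can enter once all secondary indices lie in $[n]\setminus I$, by Definition \ref{defn43}), so $\mathbb{A}$ would fail to be $j$-primitive for such $j$, a contradiction.

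For sufficiency I would first record two structural facts. First, $F$ is monotone ($S\subseteq T\Rightarrow F(S)\subseteq F(T)$), and since each column of $M(\mathbb{A})$ carries a nonzero off-diagonal entry under irreducibility (apply Definition \ref{defn43} to $I=[n]\setminus\{j\}$), every trajectory $S_1(\mathbb{A},j),S_2(\mathbb{A},j),\ldots$ starts and stays nonempty. Second, Definition \ref{defn43} is equivalent to the clean statement that $F(S)\not\subseteq S$ for \emph{every} nonempty proper $S\subsetneq[n]$ (take $I=[n]\setminus S$). Granting the hypothesis, $S_k(\mathbb{A},j_0)=[n]$ for $k=\gamma_{j_0}(\mathbb{A})$, and by monotonicity together with Proposition \ref{pro28}(ii) it would suffice to show that for each target $j$ the trajectory of $j$ eventually \emph{contains} some $S_s(\mathbb{A},j_0)$ with $s\le k$: once $S_t(\mathbb{A},j)\supseteq S_s(\mathbb{A},j_0)$ we get $S_{t+(k-s)}(\mathbb{A},j)=F^{\,k-s}(S_t(\mathbb{A},j))\supseteq F^{\,k-s}(S_s(\mathbb{A},j_0))=S_k(\mathbb{A},j_0)=[n]$, forcing $j$-primitivity, hence primitivity by Proposition \ref{pro24}.

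To push the trajectory of $j$ upward I would exploit the reformulated irreducibility on limit cycles. Since $2^{[n]}$ is finite, the trajectory of $j$ is eventually periodic; if it never reaches $[n]$, let $W$ be the union of the sets in its limit cycle. Because $F$ cyclically permutes these sets and $F(\bigcup_i S_i)\supseteq\bigcup_i F(S_i)$, one gets $W\subseteq F(W)$, so the increasing chain $W\subseteq F(W)\subseteq F^2(W)\subseteq\cdots$ stabilizes at a set $W^\ast$ with $F(W^\ast)=W^\ast$; since $W^\ast$ is nonempty, the reformulated irreducibility forces $W^\ast=[n]$.

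The main obstacle is precisely closing this last step: $F$ is monotone but \emph{not} inflationary — individual trajectory sets can lose elements and merely \emph{shift}, exactly the sliding-window phenomenon of $S_k(\mathbb{A}_0,n-1)$ in Proposition \ref{pro214}. Consequently, although the \emph{union} $W$ of a limit cycle expands to $[n]$, each cycle set $F^N(S_{t_0})$ stays a proper subset of $F^N(W)=[n]$, so no contradiction is produced and a nontrivial periodic cycle of proper subsets is not yet excluded. What is really needed is a tensor analogue of the matrix dichotomy ``irreducible and imprimitive $\Rightarrow$ $j$-primitive for no $j$'': one must show that the single hypothesis $S_k(\mathbb{A},j_0)=[n]$ breaks all periodicity, presumably by developing an imprimitivity-index theory for irreducible nonnegative tensors and proving that $j_0$-primitivity forces period $1$. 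This step admits no purely matrix-theoretic shortcut, because $M(\mathbb{A})$ need not even be irreducible when $\mathbb{A}$ is, and it is the crux that makes the statement a conjecture.
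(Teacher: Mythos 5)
You should first note that the paper does not prove this statement at all: it is posed as Conjecture \ref{con45}, an open problem, and the only thing the paper establishes is the matrix case $m=2$ (Proposition \ref{pro44}), whose proof concatenates walks in the strongly connected digraph $D(A)$ --- a technique with no direct tensor analogue, because membership in $S_{k+1}(\mathbb{A},j)$ requires \emph{all} of the $m-1$ secondary indices to lie in $S_k(\mathbb{A},j)$ simultaneously. So there is no proof in the paper to compare yours against; the only question is whether your blind attempt settles the open problem, and it does not.

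Your necessity direction is correct and complete: primitivity gives finiteness of every $\gamma_j(\mathbb{A})$ by Proposition \ref{pro24}, and the induction showing $S_k(\mathbb{A},j)\subseteq[n]\setminus I$ for all $k$ whenever $j\notin I$ and $I$ witnesses reducibility is sound. Your reformulation of Definition \ref{defn43} as ``$F(S)\not\subseteq S$ for every nonempty proper $S\subsetneq[n]$'' is also correct and genuinely useful. But the sufficiency direction --- the actual content of the conjecture --- is left open at precisely the point you identify: from a nontrivial limit cycle $S_{t_0},\ldots,S_{t_0+p-1}$ of the trajectory of $j$ you obtain only that the union $W$ satisfies $W\subseteq F(W)$, hence that the increasing chain $F^N(W)$ stabilizes at $[n]$ by irreducibility; since $F$ is monotone but not inflationary, this forces nothing about the individual cycle sets, which can keep sliding as in Proposition \ref{pro214}, and no contradiction results. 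Your diagnosis that one would need an imprimitivity-index theory for irreducible nonnegative tensors, and that $M(\mathbb{A})$ cannot serve as a crutch since it need not inherit irreducibility, is a fair assessment of why the problem is hard --- but an assessment of difficulty is not a proof. As submitted, the proposal establishes one direction of a biconditional that the paper itself leaves as a conjecture, so it neither contradicts the paper nor goes beyond it.
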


 Let $m(\geq2) ,n(\geq 2), j\in [n], k(\geq1)$ be positive integers,
 $\mathbb{A}$ be a nonnegative tensor  with order $m$ and dimension $n$.
  Recall the definitions of $\gamma_j(\mathbb{A})$, $S_k(\mathbb{A},j)$
  and Proposition \ref{pro28}, we know $\gamma_j(\mathbb{A})$ is the least positive integer satisfying $S_k(\mathbb{A},j)=[n]$.
  If $\mathbb{A}$ is primitive, then $1\leq \gamma_j(\mathbb{A})\leq (n-1)^2+1$ and the upper and lower bounds are tight
  by Theorem \ref{thm18} and Proposition \ref{pro24}.
   But if $\mathbb{A}$ is not  primitive, and there exists some $j\in [n]$ and positive integer $k$  such that $S_k(\mathbb{A},j)=[n]$,
    that is, there exists some $j\in [n]$  such that $\mathbb{A}$ is $j$-primitive,
    does $\gamma_j(\mathbb{A})$ have a tight lower bound and a tight upper bound?
    It is obvious that $\gamma_j(\mathbb{A})\geq 1$. Now we only consider the upper bound of $\gamma_j(\mathbb{A})$.

Let  $m, n, j(\in [n])$ be positive integers with $m\ge 2, n\ge 2$,
  and  notations 

\noindent $r(m,n)=\max\{\gamma_j(\mathbb{A})\hskip.1cm | \hskip.1cm \mathbb{A} \mbox { is a nonnegative but not primitive tensor  of order } m \mbox{  dimension } n$

\hskip3.4cm $ \mbox { and there exists some } j\in [n] \mbox{  such that } \mathbb{A} \mbox{ is } j\mbox{-primitive}\}.$

\noindent $R_j(m,n)=\{k\hskip.1cm |\hskip.1cm  \mbox {there exists  a nonnegative but not primitive tensor } \mathbb{A}
  \mbox{ of order } m \mbox{  dimension } $

\hskip2.3cm $ n \mbox { such that } \mathbb{A} \mbox{ is } j\mbox{-primitive and  } \gamma_j(\mathbb{A})=k\}.$

   Now we investigate the properties of $r(m,n)$ and $R_j(m,n)$.
\begin{proposition}\label{pro46}
$r(2,n)=n^2-4n+6.$
\end{proposition}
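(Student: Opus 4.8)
The plan is to translate $j$-primitivity into a purely combinatorial statement about walks and then reduce the extremal problem to Wielandt's classical bound applied to a proper strongly connected component. Since $m=2$ here, $\mathbb{A}$ is a matrix $A$. First I would use Proposition \ref{pro29} together with Proposition \ref{pro28}(iii) to rewrite the quantity of interest: $A$ is $j$-primitive with $\gamma_j(A)=k$ exactly when $k$ is the least integer such that $S_k(A,j)=[n]$, i.e.\ such that in $\overleftarrow{D(A)}$ there is a walk of length \emph{exactly} $k$ from $j$ to every vertex. Reaching $j$ itself at length $k$ forces a closed walk through $j$, so the strongly connected component $C$ of $j$ contains a cycle; a standard period argument (the vertices of a strongly connected component of period $d>1$ split into $d$ residue classes that cannot all be reached at one common length) shows $C$ must be primitive. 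By Proposition \ref{pro44}, if $A$ were irreducible it would then be primitive; hence for a nonprimitive $A$ the component $C$ is a \emph{proper} primitive component, $|C|=c\le n-1$, and the remaining $n-c$ vertices are strictly upstream of $C$ (reachable from $C$ in $\overleftarrow{D(A)}$ but not lying in $C$).

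For the upper bound I would bound $\gamma_j(A)\le \max_{u}e(u)$, where $e(u)$ denotes the threshold above which every length yields a $j\to u$ walk in $\overleftarrow{D(A)}$. For $u\in C$, Wielandt's bound applied to the primitive component $C$ gives $e(u)\le (c-1)^2+1$, since within $C$ all lengths $\ge (c-1)^2+1$ are realizable. For an upstream $u$, fix a shortest walk from $C$ to $u$; it is a path through outside vertices, so its length $d(u)$ satisfies $d(u)\le n-c$, and concatenating a long walk inside $C$ with this path yields $e(u)\le (c-1)^2+1+d(u)\le (c-1)^2+1+(n-c)$. Therefore
\[
\gamma_j(A)\le (c-1)^2+(n-c)+1=c^2-3c+n+2 .
\]
Since $c^2-3c+n+2$ is convex in $c$, its maximum over $1\le c\le n-1$ is attained at the endpoint $c=n-1$, giving $\gamma_j(A)\le n^2-4n+6$, and hence $r(2,n)\le n^2-4n+6$.

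For the matching lower bound I would exhibit, for $n\ge 3$, a reducible matrix attaining the bound. Take $C=\{1,\dots,n-1\}$ to carry the Wielandt digraph on $n-1$ vertices (so $C$ is primitive with exponent $(n-2)^2+1$), choose an ordered pair $(j,p)$ of vertices of $C$ realizing the extremal Wielandt gap, namely the largest length $(n-2)^2$ admitting no $j\to p$ walk, and append a single new vertex $n$ whose only in-arc in $\overleftarrow{D(A)}$ is $p\to n$, making $n$ a sink. Then $A$ is reducible (its reversed digraph is not strongly connected), every vertex still reaches $j$, and $C$ is primitive, so $A$ is $j$-primitive. Because the unique in-arc to $n$ comes from $p$, the set of $j\to n$ walk lengths equals the set of $j\to p$ walk lengths shifted by $1$; hence the largest missing length to $n$ is $(n-2)^2+1$, while every other vertex is reached for all lengths $\ge (n-2)^2+1$. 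Reading off the least common length gives $\gamma_j(A)=(n-2)^2+2=n^2-4n+6$, which combined with the upper bound proves the proposition.

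The main obstacle is the lower bound's exact book-keeping: one must be sure that appending the sink $n$ raises the critical threshold by precisely one and is not short-circuited. This requires the extremal Wielandt pair to be chosen so that the unique route to $n$ genuinely passes through the gap (in particular that $j$ carries no loop that would fill in the missing length), which one verifies directly from the explicit cycle-plus-chord structure of the Wielandt digraph; the small case $n=3$ already exhibits the phenomenon. A secondary point to check is the structural claim that a primitive component is forced (the period argument above) together with the fact that the crude estimate $d(u)\le n-c$ and the convexity of $c^2-3c+n+2$ really pin the optimum at $c=n-1$, so that no configuration with a smaller component but a longer upstream tail can do better.
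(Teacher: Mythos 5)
Your proposal is correct and matches the paper's own proof in all essentials: the upper bound uses the same decomposition (the strong component $C$ of $j$ is necessarily primitive and proper, Wielandt's bound applies inside $C$, an extra $\le n-|C|$ accounts for the outside vertices, and $|C|^2-3|C|+n+2$ is maximized at $|C|=n-1$, giving $n^2-4n+6$), while your lower-bound construction is exactly the paper's matrix $M_2$, namely the Wielandt digraph on $n-1$ vertices with a pendant vertex attached at the extremal gap pair, just described in the reversed digraph. The only cosmetic differences are that you prove primitivity of $C$ by a period argument where the paper applies Proposition \ref{pro44} to the component, and that you choose the extremal pair abstractly (its existence is immediate from the exponent being $(n-2)^2+1$) where the paper writes the matrix explicitly.
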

\begin{proof}
Let $A$ be a  nonnegative matrix of order $n$, but $A$ is not primitive  and there exists
some $j\in [n]$ such that $A$ is $j$-primitive. Let $\gamma_j(A)=k$, it implies that $(A^k)_{uj}>0$ for any $u\in [n]$,
thus there exists a walk of length $k$ from $u$ to $j$ in the associated digraph $D=D(A)$ for any $u\in [n]$ by the relation between matrices and digraphs.

Take $u=j$, we know there exists a closed walk of length $k$ from $j$ to $j$,
it implies that $j$ belongs in some strong connected subdigraph of $D$.
Without loss of generality, we assume  a set $V_1$ is the maximal subset of $V(D)$ such that  $j\in V_1\subsetneqq V(D)$
and the induced subdigraph $D_1=D[V_1]$  is strong connected.
Thus for any $v\not\in V_1$, there does not exist a walk from $j$ to $v$ in $D$ since there exists a walk of length $k$ from $v$ to $j$ by $A$ is $j$-primitive.

Now we show the adjacent matrix $A(D_1)$ is $j$-primitive.
That is, for any $u\in V_1$, let $W$ be a walk of length $k$ from $u$ to $j$ in $D$,
we only need show  $V(W)\subseteq V_1$.
Otherwise, if there  exists vertex $v\in (V(D)\backslash V_1)\cap V(W)$,
then there exists a walk from $j$ to $v$ since $u,j\in V_1$ and  $D_1=D[V_1]$  is strong connected,
and thus $v\in V_1$, it is a contradiction. 

Therefore $A(D_1)$ is primitive by Proposition \ref{pro44}
and thus there exists a positive integer $l\leq (|V_1|-1)^2+1$
such that there exists a walk of length $l$ from $u$ to $j$ for any $u\in V_1$ by the well-known Wieland's upper bound.

Then $k\leq n-|V_1|+l\leq |V_1|^2-3|V_1|+n+2\leq n^2-4n+6$ by $|V_1|\leq n-1$, and thus $r(2,n)\leq n^2-4n+6$ by the definition of $r(2,n)$.

On the other hand, let $M_2$ be a  nonnegative matrix of order $n$ and $D(M_2)$ be the associated digragh as follows.
Clearly, $M_2$ and $D(M_2)$ are not primitive.
$$M_2=\left(
                 \begin{array}{cccccccc}
                    0 & 1 & 0  &\cdots & 0 & 0 & 0\\
                    0 & 0 & 1  &\cdots & 0 & 0 & 0\\
                    \vdots & \vdots & \vdots & \ddots & \vdots  &\vdots&\vdots \\
                    0 & 0 & 0  &\cdots  & 1 &  0 & 0 \\
                    1 & 0 & 0  &\cdots  & 0 & 1 &0 \\
                    1 & 0 & 0  &\cdots  & 0 & 0 & 0 \\
                    0 & 0 & 0  &\cdots  & 0 & 1 & 0
                 \end{array}
               \right),
$$

$$
        \hskip1cm
 \xy 0;/r3pc/: \POS (1,1) *\xycircle<3pc,3pc>{};
 \POS(2.5,2.5)   *@{*}*+!D{n}="t";
 \POS(1.9,2.3) \ar@{->}(1,2);(2.5,2.5)="s";
        \POS(1,2) *@{*}*+!L{\hspace*{9pt}{n\hspace*{-3pt}-\hspace*{-3pt}1}}="r";
        \POS(1.5,1.86) \ar@{->}(1.5,1.86);(1.6,1.8)="a";
        \POS(.5,1.86)   \ar@{->}(.5,1.86);(.6,1.91)="b";
        \POS(1.8,1.6)  *@{*}*+!L{\hspace*{3pt}{n\hspace*{-3pt}-\hspace*{-3pt}2}}="c";
        \POS(.2,1.6)   *@{*}*+!R{\mathrm{1}}="d";
        \POS(.2,.4)    \ar@{->}(.2,.4) ;(.19,.415) ="e";
        \POS(1.8,.4)   \ar@{->}(1.8,.4);(1.79,.385)="f";
        \POS "c" \ar @{->} (.7,1.6) \ar @{-} "d";
         \POS (0.015, .9) *@{*}*+!R{2}="g";
         \POS(.4,0.2) *@{*}*+!R{\mathrm{}}="k";
      \POS(.85,0.006) \ar@{->}(1,0.00);(0.85,.006)="r";
         \POS(1.6,0.2) *@{*}*+!R{\mathrm{}}="l";
           \POS(0.8,0.2) *@{*}*+!R{\mathrm{}}="m";
             \POS(1.0,0.2) *@{*}*+!R{\mathrm{}}="p";
               \POS(1.2,0.2) *@{*}*+!R{\mathrm{}}="q";

         \POS(2.0,.9)  *@{*}*+!L{\hspace*{3pt}{n\hspace*{-3pt}-\hspace*{-3pt}3}}="h";
        \POS(1.99,1.1) \ar@{->}(1.98,1.25);(1.99,1.1)="i";
        \POS(.01,1.1)   \ar@{->}(.01,1.1);(.023,1.25)="j";

 \endxy
 $$
 $$\mbox{ \qquad Figure 2.  digraph } D(M_2)$$

 It is easy to show there does not exist a walk of length $n^2-4n+5$ from $n$ to $n-1$. Then $\gamma_{n-1}(M_2)\geq n^2-4n+6.$

 Combining the above two inequalities, we obtain $r(2,n)= n^2-4n+6$.
\end{proof}

The following necessary condition for a tensor to be primitive is useful.
\begin{proposition}\label{pro47}{\rm (\cite{YHY13}, (i) of Proposition 2.7)}
Let $\mathbb{A}$ be a nonnegative primitive
tensor with order $m$ and dimension $n$, $M(\mathbb{A})$ the majorization matrix of $\mathbb{A}$. Then 
for each $j\in[n]$, there exists an integer $i\in[n]\backslash\{j\}$ such that $(M(\mathbb{A}))_{ij}>0$.
\end{proposition}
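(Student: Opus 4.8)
The plan is to argue by contradiction, exploiting the recurrence relation (\ref{eq22}) for the reachable sets $S_k(\mathbb{A},j)$ together with the characterization of primitivity in terms of these sets. Suppose, contrary to the claim, that there is some column index $j\in[n]$ for which $(M(\mathbb{A}))_{ij}=a_{ij\cdots j}=0$ for every $i\in[n]\backslash\{j\}$. The whole point is that such a degenerate column traps the reachable set from $j$ inside $\{j\}$, so that $S_k(\mathbb{A},j)$ can never grow to all of $[n]$, which will contradict primitivity.

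First I would translate the hypothesis into a statement about $S_1(\mathbb{A},j)$. By (\ref{eq21}) and Definition \ref{defn21}, $S_1(\mathbb{A},j)=\{u\in[n]\mid (M(\mathbb{A}))_{uj}>0\}=\{u\in[n]\mid a_{uj\cdots j}>0\}$, and the assumption that all off-diagonal entries of column $j$ vanish gives $S_1(\mathbb{A},j)\subseteq\{j\}$ at once.

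Next I would show by induction on $k$ that $S_k(\mathbb{A},j)\subseteq\{j\}$ for every $k\ge1$; the base case is the previous paragraph. For the inductive step, assume $S_k(\mathbb{A},j)\subseteq\{j\}$ and apply the recurrence (\ref{eq22}): any $u\in S_{k+1}(\mathbb{A},j)$ requires indices $i_2,\ldots,i_m\in S_k(\mathbb{A},j)$ with $a_{ui_2\cdots i_m}>0$. Since $S_k(\mathbb{A},j)\subseteq\{j\}$, the only admissible choice is $i_2=\cdots=i_m=j$, so $a_{ui_2\cdots i_m}=a_{uj\cdots j}=(M(\mathbb{A}))_{uj}$; hence $S_{k+1}(\mathbb{A},j)\subseteq\{u\mid (M(\mathbb{A}))_{uj}>0\}=S_1(\mathbb{A},j)\subseteq\{j\}$, completing the induction.

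Finally I would derive the contradiction. Since $n\ge2$, we have $\{j\}\subsetneq[n]$, so $S_k(\mathbb{A},j)\ne[n]$ for every $k$; by (iii) of Proposition \ref{pro28} this means $\gamma_j(\mathbb{A})$ does not exist, i.e.\ $\mathbb{A}$ is not $j$-primitive. On the other hand, primitivity of $\mathbb{A}$ forces $M(\mathbb{A}^r)>0$ for some $r$ by Proposition \ref{pro22}, and in particular $(M(\mathbb{A}^r))_{uj}>0$ for all $u$, that is, $S_r(\mathbb{A},j)=[n]$ --- a contradiction. I do not anticipate a genuine obstacle here; the one point that needs care is the inductive step, where the containment $S_k(\mathbb{A},j)\subseteq\{j\}$ must be used to collapse the general-product index $(i_2,\ldots,i_m)$ to the single repeated index $(j,\ldots,j)$, thereby reducing the tensor entry $a_{ui_2\cdots i_m}$ to the majorization-matrix entry $(M(\mathbb{A}))_{uj}$.
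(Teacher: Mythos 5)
Your proof is correct. Note that the paper itself does not prove this proposition --- it is quoted from \cite{YHY13} (Proposition 2.7(i)) --- so there is no in-paper argument to compare against; your contradiction argument via the reachable sets $S_k(\mathbb{A},j)$ is a valid, self-contained justification. The key steps all check out: under the hypothesis that column $j$ of $M(\mathbb{A})$ has no off-diagonal nonzero entry, equation (\ref{eq21}) gives $S_1(\mathbb{A},j)\subseteq\{j\}$, the recurrence (\ref{eq22}) propagates this containment (including the degenerate case $S_k(\mathbb{A},j)=\varnothing$, which your formulation $S_k(\mathbb{A},j)\subseteq\{j\}$ silently and correctly covers), and Proposition \ref{pro22} together with (\ref{eq21}) shows primitivity would force $S_r(\mathbb{A},j)=[n]$ for some $r$, contradicting $S_r(\mathbb{A},j)\subseteq\{j\}\subsetneq[n]$ under the paper's standing assumption $n\ge 2$.
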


\begin{proposition}\label{pro48}
If $m\geq \lfloor\frac{n-1}{2}\rfloor+1$, then $\left(\begin{array}{c}
                                                   n-1 \\
                                                   \lfloor\frac{n-1}{2}\rfloor
                                                 \end{array}\right)+1\leq r(m,n)\leq 2^n-1.$
\end{proposition}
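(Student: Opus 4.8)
The plan is to prove the two inequalities separately, exploiting that by the recurrence (\ref{eq22}) the set $S_{t+1}(\mathbb{A},j)$ depends only on $S_t(\mathbb{A},j)$. Thus, for each fixed $j$, the sequence $S_1(\mathbb{A},j),S_2(\mathbb{A},j),\ldots$ is the forward orbit of the column set $\{u:(M(\mathbb{A}))_{uj}>0\}$ under one fixed set map $\Phi$ determined by the zero pattern of $\mathbb{A}$. Write $k=\lfloor\frac{n-1}{2}\rfloor$ and $N=\binom{n-1}{k}$.

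For the upper bound, suppose $\mathbb{A}$ is $j$-primitive and put $g=\gamma_j(\mathbb{A})$, so $S_g(\mathbb{A},j)=[n]$ and $g$ is least with this property by Proposition \ref{pro28}(iii). First I would note that every $S_t(\mathbb{A},j)$ with $t\le g$ is nonempty, since by (\ref{eq22}) the empty set maps to the empty set and could never reach $[n]$. Next I claim $S_1(\mathbb{A},j),\ldots,S_{g-1}(\mathbb{A},j)$ are pairwise distinct: if $S_a=S_b$ with $a<b\le g-1$, then Proposition \ref{pro28}(i) forces the orbit to be periodic from index $a$ with period $b-a$, so $S_g$ would coincide with some $S_c$, $a\le c\le b-1$, which is a proper subset of $[n]$ by minimality of $g$, contradicting $S_g=[n]$. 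Hence $S_1,\ldots,S_{g-1}$ are $g-1$ distinct nonempty proper subsets of $[n]$, of which there are only $2^n-2$, giving $\gamma_j(\mathbb{A})=g\le 2^n-1$. As this holds for every column (and every such tensor), $r(m,n)\le 2^n-1$.

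For the lower bound I would construct an explicit tensor whose orbit runs through a maximum antichain. Assume first $k\ge 2$. By Sperner's theorem the $k$-subsets of $[n-1]$ form an antichain of size $N$; list them as $B_1,\ldots,B_N$ in any order. Using the hypothesis $m\ge k+1$, so that a multiset of the $m-1$ lower indices can have support exactly a $k$-set, I would define $\mathbb{A}$ by: (a) $a_{un\cdots n}>0$ precisely for $u\in B_1$, so that $S_1(\mathbb{A},n)=B_1$; (b) for each $i<N$, an entry with lower-index support exactly $B_i$ is positive in every row $u\in B_{i+1}$; (c) an entry with support $B_N$ is positive in every row $u\in[n]$; and all remaining entries are $0$. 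Because the $B_i$ form an antichain, $B_{i'}\subseteq B_i$ forces $i'=i$, so evaluating (\ref{eq22}) yields $\Phi(B_i)=B_{i+1}$ for $i<N$ and $\Phi(B_N)=[n]$, while the type-(a) entries never fire during the orbit since $n\notin B_i$. Thus $S_t(\mathbb{A},n)=B_t$ for $t\le N$ and $S_{N+1}(\mathbb{A},n)=[n]$, giving $\gamma_n(\mathbb{A})=N+1$. Moreover, since $k\ge 2$ the entries in (b) and (c) are not of majorization type, so columns $1,\ldots,n-1$ of $M(\mathbb{A})$ vanish; then $S_1(\mathbb{A},1)=\varnothing$ shows $\mathbb{A}$ is not $1$-primitive, hence not primitive. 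Therefore $r(m,n)\ge N+1=\binom{n-1}{k}+1$.

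Finally, the residual cases $k\le 1$ (that is $n\le 4$), where the antichain construction collapses into majorization entries and can accidentally become primitive, I would dispatch separately by lifting the non-primitive matrix $M_2$ of Proposition \ref{pro46}: take the tensor $\mathbb{A}$ with $M(\mathbb{A})=M_2$ and $a_{ui_2\cdots i_m}=0$ whenever $i_2\cdots i_m\ne i_2\cdots i_2$. By Propositions \ref{pro210} and \ref{pro27} this $\mathbb{A}$ is non-primitive, is $(n-1)$-primitive, and satisfies $\gamma_{n-1}(\mathbb{A})=\gamma_{n-1}(M_2)\ge n^2-4n+6$, and one checks $n^2-4n+6\ge\binom{n-1}{k}+1$ for these small $n$. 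The main obstacle I anticipate is the lower bound: verifying that the antichain dynamics produces exactly the intended successor at every step (this is where the antichain property together with the order-$m$ padding is essential) while simultaneously guaranteeing non-primitivity, and isolating cleanly the small-$n$ exceptions where $k\le 1$.
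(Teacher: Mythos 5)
Your proof is correct and follows essentially the same route as the paper's: the upper bound via the periodicity argument of Proposition \ref{pro28}(i), showing the orbit sets $S_1(\mathbb{B},j),\ldots,S_{\gamma_j(\mathbb{B})}(\mathbb{B},j)$ are pairwise distinct nonempty subsets of $[n]$, and the lower bound via chaining all $\lfloor\frac{n-1}{2}\rfloor$-subsets of an $(n-1)$-element set (a Sperner antichain, so only the ``current'' set can fire at each step), which is exactly the paper's construction with base column $1$ relabeled as $n$. In fact your treatment is more careful than the paper's: when $\lfloor\frac{n-1}{2}\rfloor=1$ (i.e.\ $n\in\{3,4\}$) the antichain members are singletons, the chaining entries become majorization entries, and the paper's claim that the columns $j\neq 1$ of $M(\mathbb{A})$ vanish fails --- indeed $M(\mathbb{A})$ is then primitive, hence so is $\mathbb{A}$ by Proposition \ref{pro26} --- so your separate patch for these cases, lifting the non-primitive matrix $M_2$ via Proposition \ref{pro210}, repairs a genuine gap in the paper's own argument (only the degenerate case $n=2$, where the stated lower bound $\binom{n-1}{\lfloor (n-1)/2\rfloor}+1=2$ is itself unattainable, remains problematic for both proofs).
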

\begin{proof}
Let  $S_1, S_2, \ldots, S_k$ be  all the subset of $[n] \backslash \{1\}$ with $|S_1|=|S_2|=\ldots=|S_k|=\lfloor\frac{n-1}{2}\rfloor$.
It is clear that $k=\left(\begin{array}{c}
                                                   n-1 \\
                                                   \lfloor\frac{n-1}{2}\rfloor
                                                 \end{array}\right)$.
Let $\mathbb{A}=(a_{i_1\ldots i_m})$ be a nonnegative tensor with  order $m$ and  dimension $n$,
where 
$$a_{i_1i_2\ldots i_m}=\left\{\begin{array}{ll}
                            1, & \mbox{ if } i_1\in S_1, \mbox{ and } i_2=\ldots=i_m=1; \\
                            1, & \mbox{ if } i_1\in S_{j+1},  \mbox{ and } \{i_2, \ldots, i_m\}=S_j \mbox{ for } j=1,2,\ldots, k-1;\\
                            1, & \mbox{ if } i_1\in [n],  \mbox{ and } \{i_2, \ldots, i_m\}=S_k; \\
                            0, & \mbox { otherwise.  }
                          \end{array}\right.$$

By the above definition, we know for any $j\in [n]\backslash \{1\}$, $(M(\mathbb{A}))_{ij}=0$ for any $i\in [n]$.
Then $\mathbb{A}$ is not primitive by Proposition \ref{pro47}.

 Now we show $\gamma_{1}(\mathbb{A})=k+1$. By (\ref{eq21}) and (\ref{eq22}), we have
$$S_1(\mathbb{A}, 1)=\{u\in [n]\hskip.1cm |\hskip.1cm M(\mathbb{A})_{u1}=a_{u1\ldots 1}>0\}=S_1,$$
$$S_2(\mathbb{A}, 1)=\{u\in [n]\hskip.1cm |\hskip.1cm \mbox{there exist } i_2,\ldots, i_m\in S_1(\mathbb{A},1) \mbox{ and } a_{ui_2\ldots i_m}>0\}=S_2,$$
$$\cdots,$$
$$S_l(\mathbb{A}, 1)=\{u\in [n]\hskip.1cm |\hskip.1cm \mbox{there exist } i_2,\ldots, i_m\in S_{l-1}(\mathbb{A},1) \mbox{ and } a_{ui_2\ldots i_m}>0\}=S_l, $$
$$\cdots,$$
$$S_k(\mathbb{A}, 1)=\{u\in [n]\hskip.1cm |\hskip.1cm \mbox{there exist } i_2,\ldots, i_m\in S_{k-1}(\mathbb{A},1) \mbox{ and } a_{ui_2\ldots i_m}>0\}=S_k,$$
$$S_{k+1}(\mathbb{A}, 1)=\{u\in [n]\hskip.1cm |\hskip.1cm \mbox{there exist } i_2,\ldots, i_m\in S_{k}(\mathbb{A},1) \mbox{ and } a_{ui_2\ldots i_m}>0\}=[n].$$
Then $\gamma_{1}(\mathbb{A})=k+1$  by Proposition \ref{pro28} and thus $r(m,n)\geq \left(\begin{array}{c}
                                                   n-1 \\
                                                   \lfloor\frac{n-1}{2}\rfloor
                                                 \end{array}\right)+1$. 

Next, we show $r(m,n)\leq 2^n-1.$
Let $\mathbb{B}=(b_{i_1\ldots i_m})$ be a nonnegative tensor with  order $m$ and  dimension $n$,
and $\mathbb{B}$ be not primitive. If $\mathbb{B}$ is $j$-primitive with some $j\in [n]$,
then we denote $K=\gamma_{j}(\mathbb{A})$. Then $S_1(\mathbb{B},j), S_2(\mathbb{B},j), \ldots, S_{K-1}(\mathbb{B},j)$
are proper subsets of $[n]$ and  $S_{K}(\mathbb{B},j)=[n]$.

Now we show $S_1(\mathbb{B},j), S_2(\mathbb{B},j), \ldots, S_K(\mathbb{B},j)$ are pairwise distinct.
Otherwise, there exist $u,v$ such that $1\leq u<v\leq K$ and $S_1(\mathbb{B},j), S_2(\mathbb{B},j), \ldots, S_u(\mathbb{B},j), \ldots, S_{v-1}(\mathbb{B},j)$  are pairwise distinct,
but $S_u(\mathbb{B},j)=S_v(\mathbb{B},j)$. Then by (\ref{eq22}), we have
$S_{u+t+s(v-u)}(\mathbb{B},j)=S_{u+t}(\mathbb{B},j)$ for any $t$ with $0\leq t< v-u$ and nonnegative integer $s$,
and thus there exist nonnegative integers $s_1, t_1$ with $0\leq t_1< v-u$ such that $K=u+t_1+s_1(v-u)$
and $S_{K}(\mathbb{B},j)=S_{u+t_1}(\mathbb{B},j)\not=[n],$ it is a contradiction.
We note that   $S_1(\mathbb{B},j), S_2(\mathbb{B},j), \ldots, S_K(\mathbb{B},j)$ are subset of $[n]$ and they are not empty set,
so $K\leq 2^n-1.$ Thus $r(m,n)\leq 2^n-1.$
\end{proof}

\begin{proposition}\label{pro49}
Let  $m\geq 2$, and  $\mathbb{A}=(a_{i_1\ldots i_m})$ be a nonnegative tensor with  order $m$ and  dimension $n$,
$\mathbb{B}=(b_{i_1\ldots i_mi_{m+1}})$ be a nonnegative tensor  with  order $m+1$ and  dimension $n$ where
$$b_{i_1\ldots i_mi_{m+1}}=\left\{\begin{array}{ll}
                                                   a_{i_1\ldots i_m}, & \mbox{ if } i_{m+1}=i_m; \\
                                                   0, & \mbox{ otherwise. }
                                                 \end{array}\right.$$
Then we have

{\rm (i). } There exists some $j\in [n]$ satisfying $\mathbb{A}$ is $j$-primitive
if and only if there exists some $j\in [n]$ satisfying $\mathbb{B}$ is $j$-primitive,
 and $\gamma_j(\mathbb{B})=\gamma_j(\mathbb{A}).$

 {\rm (ii). } $\mathbb{A}$ is primitive if and only if $\mathbb{B}$ is primitive, and thus $\gamma(\mathbb{B})=\gamma(\mathbb{A}).$
\end{proposition}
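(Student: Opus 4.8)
The plan is to reduce both parts to the single identity $S_k(\mathbb{B},j)=S_k(\mathbb{A},j)$ for all $k\ge 1$ and all $j\in[n]$, after which (i) and (ii) follow immediately from the characterizations of $j$-primitivity and primitivity already recorded in Section 2. First I would compute the majorization matrix of $\mathbb{B}$. By Definition \ref{defn21}, $(M(\mathbb{B}))_{ij}=b_{ij\cdots j}$ with $m$ copies of $j$ following $i$; here the last two indices agree ($i_{m+1}=i_m=j$), so the defining rule for $\mathbb{B}$ gives $b_{ij\cdots j}=a_{ij\cdots j}=(M(\mathbb{A}))_{ij}$. Hence $M(\mathbb{B})=M(\mathbb{A})$, and in particular $S_1(\mathbb{B},j)=S_1(\mathbb{A},j)$ for every $j$ by \eqref{eq21}.

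The key step is to show that the recurrence \eqref{eq22} for $\mathbb{B}$ collapses to the one for $\mathbb{A}$. Writing out \eqref{eq22} for $\mathbb{B}$, an index $u$ lies in $S_{k+1}(\mathbb{B},j)$ iff there exist $i_2,\ldots,i_{m+1}\in S_k(\mathbb{B},j)$ with $b_{ui_2\cdots i_{m+1}}>0$. By the definition of $\mathbb{B}$, a positive entry forces $i_{m+1}=i_m$ and $a_{ui_2\cdots i_m}>0$. Conversely, given any $i_2,\ldots,i_m\in S_k(\mathbb{B},j)$ with $a_{ui_2\cdots i_m}>0$, setting $i_{m+1}:=i_m$ produces a valid witness, since the constraint $i_{m+1}\in S_k(\mathbb{B},j)$ is automatic: $i_m$ already belongs to $S_k(\mathbb{B},j)$. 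Thus the quantifier over the extra index is free, and $S_{k+1}(\mathbb{B},j)=\{u\in[n]\mid \exists\, i_2,\ldots,i_m\in S_k(\mathbb{B},j),\ a_{ui_2\cdots i_m}>0\}$, which is precisely the $\mathbb{A}$-recurrence applied to the set $S_k(\mathbb{B},j)$. By induction on $k$ (base case from the previous paragraph), I conclude $S_k(\mathbb{B},j)=S_k(\mathbb{A},j)$ for all $k\ge 1$ and all $j\in[n]$.

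With this identity in hand, the two assertions are routine transcriptions. By (iii) of Proposition \ref{pro28} together with Definition \ref{defn23}, $\mathbb{A}$ (resp.\ $\mathbb{B}$) is $j$-primitive iff some $S_k$ equals $[n]$, and $\gamma_j$ is then the least such $k$; since the two chains of sets coincide, $\mathbb{A}$ is $j$-primitive iff $\mathbb{B}$ is, with $\gamma_j(\mathbb{B})=\gamma_j(\mathbb{A})$, which is (i). For (ii), $\mathbb{A}$ is primitive iff it is $j$-primitive for every $j\in[n]$, and then $\gamma(\mathbb{A})=\max_{j}\gamma_j(\mathbb{A})$ by Proposition \ref{pro24} and Remark \ref{rem211}; applying the equal-$\gamma_j$ conclusion of (i) across all $j$ yields that $\mathbb{A}$ is primitive iff $\mathbb{B}$ is, with $\gamma(\mathbb{B})=\gamma(\mathbb{A})$.

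The only place requiring care—the main, though mild, obstacle—is the bookkeeping in the key step: one must verify both inclusions in the set equality, checking in particular that appending $i_{m+1}=i_m$ never enlarges $S_{k+1}(\mathbb{B},j)$ beyond what the $\mathbb{A}$-recurrence permits, nor drops any element, so that the additional index genuinely contributes nothing. Everything downstream is an immediate application of the $j$-primitive and primitive characterizations from Section 2.
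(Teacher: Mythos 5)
Your proposal is correct and follows essentially the same route as the paper: an induction on $k$ showing $S_k(\mathbb{B},j)=S_k(\mathbb{A},j)$ (with the base case $M(\mathbb{B})=M(\mathbb{A})$ and the inductive step collapsing the extra index via $i_{m+1}=i_m$), followed by the standard characterizations of $j$-primitivity and primitivity from Propositions \ref{pro28} and \ref{pro24}. The only difference is cosmetic: you spell out both inclusions in the quantifier-collapse step, which the paper compresses into a chain of set equalities.
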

\begin{proof}
Firstly, we show  $S_k(\mathbb{B},j)=S_k(\mathbb{A},j)$ for $k\geq 1$ by induction on $k$. Clearly,
  $$S_1(\mathbb{B},j)=\{u\in[n]|b_{uj\ldots j}>0\}=\{u\in[n]|a_{uj\ldots j}>0\}=S_1(\mathbb{A},j).$$
Now we assume that $S_k(\mathbb{B},j)=S_k(\mathbb{A},j)$ for $k\geq 1$. Then by (\ref{eq22}), we have

$S_{k+1}(\mathbb{B},j)=\{u\in[n]|\mbox{ there exist } i_2, \ldots, i_m, i_{m+1}\in S_k(\mathbb{B},j) \mbox{ and }
b_{ui_2\ldots i_mi_{m+1}}>0\}$

\noindent\hskip2.5cm $=\{u\in[n]|\mbox{ there exist } i_2, \ldots, i_m, i_{m+1}\in S_k(\mathbb{A},j) \mbox{ and }
b_{ui_2\ldots i_mi_{m+1}}>0\}$

\noindent\hskip2.5cm $=\{u\in[n]|\mbox{ there exist } i_2, \ldots, i_m\in S_k(\mathbb{A},j) \mbox{ and }
b_{ui_2\ldots i_mi_{m}}>0\}$

\hskip1.9cm $=\{u\in[n]|\mbox{ there exist } i_2, \ldots, i_m\in S_k(\mathbb{A},j) \mbox{ and }
a_{ui_2\ldots i_m}>0\}$

\noindent\hskip2.5cm $=S_{k+1}(\mathbb{A},j).$

Then $\mathbb{B}$ is $j$-primitive if and only if $\mathbb{A}$ is $j$-primitive,
and  $\gamma_j(\mathbb{B})=\gamma_j(\mathbb{A}).$

 It is obvious that $\mathbb{A}$ is primitive if and only if for all $j(\in [n])$, $\mathbb{A}$ is $j$-primitive.
 Thus (ii) holds by (i).
 \end{proof}


By Proposition \ref{pro49}, we obtain the following results  and propose some questions immediately.

\begin{proposition}\label{pro410}
Let $l$ be a large positive integer. Then

{\rm (i). }  $r(2,n)\leq r(3,n)\leq r(4,n)\leq \ldots \leq r(l,n)\leq r(l+1,n)$.

{\rm (ii).  } $R_j(2,n)\subseteq R_j(3,n)\subseteq R_j(4,n)\subseteq \ldots \subseteq R_j(l,n)\subseteq R_j(l+1,n)$.
\end{proposition}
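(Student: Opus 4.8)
The plan is to obtain both parts as immediate consequences of the order-raising construction in Proposition \ref{pro49}. The essential point of that proposition is that passing from a tensor $\mathbb{A}$ of order $m$ to the tensor $\mathbb{B}$ of order $m+1$ defined there preserves, simultaneously and exactly, the three quantities that enter the definitions of $r(\cdot,n)$ and $R_j(\cdot,n)$. Indeed, by Proposition \ref{pro49}(i), $\mathbb{B}$ is $j$-primitive if and only if $\mathbb{A}$ is, with $\gamma_j(\mathbb{B})=\gamma_j(\mathbb{A})$; and by Proposition \ref{pro49}(ii), $\mathbb{B}$ is primitive if and only if $\mathbb{A}$ is, so in particular $\mathbb{B}$ fails to be primitive exactly when $\mathbb{A}$ does. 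Hence the construction sends an admissible witness for order $m$ to an admissible witness for order $m+1$ without changing the relevant value, and it suffices to verify each consecutive step $m\to m+1$ for $m=2,3,\ldots,l$ and then chain them.

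For part (i), I would fix $m\ge 2$ and prove the single inequality $r(m,n)\le r(m+1,n)$. Choose a tensor $\mathbb{A}$ of order $m$ and an index $j\in[n]$ attaining the maximum in the definition of $r(m,n)$ (the defining set consists of positive integers bounded above, so the maximum is attained), so that $\mathbb{A}$ is nonnegative, not primitive, $j$-primitive, and $\gamma_j(\mathbb{A})=r(m,n)$. Let $\mathbb{B}$ be the tensor of order $m+1$ produced from $\mathbb{A}$ by Proposition \ref{pro49}. Then $\mathbb{B}$ is nonnegative, not primitive, and $j$-primitive with $\gamma_j(\mathbb{B})=\gamma_j(\mathbb{A})=r(m,n)$, so $\mathbb{B}$ is admissible in the definition of $r(m+1,n)$, whence $r(m+1,n)\ge \gamma_j(\mathbb{B})=r(m,n)$.

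For part (ii), I would fix the index $j$ and an order $m\ge 2$ and show $R_j(m,n)\subseteq R_j(m+1,n)$. Given any $k\in R_j(m,n)$, pick a witness $\mathbb{A}$ of order $m$ that is nonnegative, not primitive, $j$-primitive, with $\gamma_j(\mathbb{A})=k$. The same construction yields a tensor $\mathbb{B}$ of order $m+1$ that is nonnegative, not primitive, $j$-primitive, with $\gamma_j(\mathbb{B})=k$; hence $k\in R_j(m+1,n)$, and the inclusion follows.

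I expect there to be essentially no obstacle beyond reading off the definitions correctly: the one thing that genuinely must be checked is that the order-raising step cannot accidentally turn a non-primitive tensor into a primitive one, and this is precisely the ``if and only if'' in Proposition \ref{pro49}(ii). Consequently the whole statement reduces to a repeated application of Proposition \ref{pro49}, and no further computation is required.
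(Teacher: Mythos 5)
Your proposal is correct and takes essentially the same route as the paper: the paper states Proposition \ref{pro410} as an immediate consequence of Proposition \ref{pro49}, and your argument simply spells out the witness-lifting details (preservation of non-primitivity, $j$-primitivity, and $\gamma_j$) that the paper leaves implicit.
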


 \begin{problem}\label{pro411}
Let $m(\geq 3), n(\geq 2)$ be positive integers. Then $r(m,n)=?$ Can you characterize the extremal tensor?
\end{problem}

\begin{problem}\label{pro412}
Does there exist gaps in $R_j(m,n)$?
\end{problem}

In \cite{Sh12}, Shao proposed the concept of strongly primitive.

\begin{definition}\label{defn413}{\rm (\cite{Sh12}, Definition 4.3)}
Let $\mathbb{A}$ be a nonnegative tensor with order $m$ and dimension $n$. If there exists some positive
integer $k$ such that $\mathbb{A}^k > 0$ is a positive tensor, then $\mathbb{A}$ is called strongly primitive,
and the smallest such $k$ is called the strongly primitive degree of $\mathbb{A}$.
\end{definition}

Let $\mathbb{A}=(a_{i_1i_2\ldots i_m})$ be a nonnegative tensor with order $m$ and dimension $n$.
It is clear that if $\mathbb{A}$ is strongly primitive, then $\mathbb{A}$ is primitive.
In fact, it is obvious that in the matrix case $(m = 2)$
 a nonnegative matrix $A$ is primitive if and only if $A$ is strongly primitive,
but in the case $m\geq 3$, Shao gave an example to show these two concepts are not equivalent(\cite{Sh12}).
Till now, there are no further research on strongly primitive. 
For convenience,  let $\eta(\mathbb{A})$ be the strongly primitive degree of $\mathbb{A}$.

\begin{proposition}\label{pro414}
Let $\mathbb{A}=(a_{i_1i_2\ldots i_m})$ be a nonnegative strongly primitive tensor with order $m$ and dimension $n$.
Then for any $\alpha\in [n]^{m-1}$, there exists some $i\in [n]$ such that $a_{i\alpha}>0$.
\end{proposition}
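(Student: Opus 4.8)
We must show: if $\mathbb{A}$ is strongly primitive (so $\mathbb{A}^k>0$ for some $k$), then for every multi-index $\alpha\in[n]^{m-1}$ there is at least one $i\in[n]$ with $a_{i\alpha}>0$. In words, no ``column slice'' of the tensor (fixing the last $m-1$ indices) can be identically zero.

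The plan is to argue by contraposition: I would assume that some slice $\alpha^{*}=(\alpha_2,\ldots,\alpha_m)\in[n]^{m-1}$ satisfies $a_{i\alpha^{*}}=0$ for all $i\in[n]$, and then exhibit a fixed entry of $\mathbb{A}^{k}$ that vanishes for every $k$, contradicting $\mathbb{A}^{k}>0$. The natural candidate to track is the diagonal-type entry of $\mathbb{A}^k$ whose lower indices are all equal to a well-chosen value. The key mechanism is the recursive/combinatorial expansion of the general product from Definition \ref{defn12}: an entry of $\mathbb{A}^k$ is a sum of products of entries of $\mathbb{A}$, and such a product is positive only if each factor $a_{i\, i_2\cdots i_m}$ appearing in it is positive. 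So if no entry of $\mathbb{A}$ with lower part $\alpha^{*}$ is positive, then $\alpha^{*}$ can never appear as the lower index block of any factor in a positive term.

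The main steps, in order, would be as follows. First, I would unwind what $\mathbb{A}^{k}>0$ means entrywise: every entry $(\mathbb{A}^k)_{i\beta}$ with $\beta\in[n]^{(m-1)^{k-1}}\cdot$(appropriate arity) is strictly positive, and in particular the entries whose entire lower index block equals $\alpha^{*}$ repeated (or, more carefully, whose lower indices force $\alpha^{*}$ to occur as the lower block of some factor at the first level of the product). Second, I would use the definition of the general product to write $(\mathbb{A}^2)_{i\gamma}=\sum a_{i i_2\cdots i_m}(\mathbb{A})_{i_2\cdots}\cdots$ and observe that to get the lower indices of $\mathbb{A}^2$ to realize $\alpha^{*}$ at the outermost factor, one needs $a_{i\,\alpha^{*}}>0$ for some $i$; iterating, the outermost factor of any power $\mathbb{A}^k$ is always an entry $a_{i\,i_2\cdots i_m}$ of $\mathbb{A}$ itself. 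Third, I would pick the specific index choice that forces the outer lower block to be exactly $\alpha^{*}$ and conclude that the corresponding entry of $\mathbb{A}^k$ is a sum over an empty set of positive terms, hence $0$, for every $k$. This contradicts strong primitivity.

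The step I expect to be the main obstacle is bookkeeping the index arities correctly: $\mathbb{A}^k$ has order $(m-1)^{k-1}(m-1)+\cdots$ — more precisely its lower index set is a block of size depending on $k$ — and I must make sure that the particular entry I claim vanishes really does have $\alpha^{*}$ appearing as the lower block of the single outermost factor $a_{i\,j_2\cdots j_m}$ after associativity is applied. The cleanest route is to write $\mathbb{A}^{k}=\mathbb{A}(\mathbb{A}^{k-1})$ and expand only the outermost product once, since every term of $(\mathbb{A}^{k})_{i\beta}$ has the form $\sum_{j_2,\ldots,j_m} a_{i\,j_2\cdots j_m}\bigl(\text{product of entries of }\mathbb{A}^{k-1}\bigr)$; choosing $\beta$ so that the only contributing lower block at this outer level is $j_2\cdots j_m=\alpha^{*}$ makes the whole sum vanish by hypothesis. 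I would verify that such a $\beta$ exists (it does, since the lower indices of $\mathbb{A}^{k-1}$ range over all of $[n]$, so I can realize $\alpha^{*}$), and that handles every $k\ge 1$ uniformly, completing the contrapositive.
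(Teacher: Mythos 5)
Your overall strategy (contraposition: a slice $\alpha^*$ with $a_{i\alpha^*}=0$ for all $i$ should force a zero entry in every power $\mathbb{A}^k$) is sound, and it is essentially the contrapositive of the paper's direct argument. But the key step as you describe it fails, and the failure is exactly the index bookkeeping you flagged as the main obstacle. In Shao's product $\mathbb{A}\mathbb{B}$ it is the \emph{right} factor whose entries appear with lower blocks prescribed by the entry of the product: $(\mathbb{A}\mathbb{B})_{i\alpha_1\ldots\alpha_{m-1}}=\sum_{j_2,\ldots,j_m}a_{ij_2\ldots j_m}\,b_{j_2\alpha_1}\cdots b_{j_m\alpha_{m-1}}$. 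So in your decomposition $\mathbb{A}^k=\mathbb{A}(\mathbb{A}^{k-1})$, the lower indices $j_2\cdots j_m$ of the outermost $\mathbb{A}$-factor are \emph{summation variables}, ranging over all of $[n]^{m-1}$ no matter which entry $(\mathbb{A}^k)_{i\beta}$ you examine; the choice of $\beta$ only pins down the lower blocks of the $\mathbb{A}^{k-1}$-factors. Hence there is no choice of $\beta$ making ``the only contributing lower block at this outer level'' equal to $\alpha^*$: your hypothesis kills only the terms with $(j_2,\ldots,j_m)=\alpha^*$, while all the other terms in the sum survive and the entry need not vanish. Your closing verification (``the lower indices of $\mathbb{A}^{k-1}$ range over all of $[n]$, so I can realize $\alpha^*$'') realizes $\alpha^*$ in the wrong place: as a lower block of the right factor $\mathbb{A}^{k-1}$, not of the outer $\mathbb{A}$.

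The repair is to flip the decomposition, which is precisely what the paper does. Write $\mathbb{A}^k=(\mathbb{A}^{k-1})\mathbb{A}$, so the entries of $\mathbb{A}$ become the inner factors, whose lower blocks are exactly the freely chosen lower blocks of $\mathbb{A}^k$: with $t=(m-1)^{k-1}$ one has $(\mathbb{A}^k)_{i_1\alpha_1\ldots\alpha_t}=\sum_{i_2,\ldots,i_{t+1}}(\mathbb{A}^{k-1})_{i_1i_2\ldots i_{t+1}}\,a_{i_2\alpha_1}\cdots a_{i_{t+1}\alpha_t}$, which is the paper's equation (\ref{eq42}). Taking every block $\alpha_l=\alpha^*$, each term contains a factor $a_{i_l\alpha^*}=0$, so this entry of $\mathbb{A}^k$ vanishes for every $k$, contradicting $\mathbb{A}^k>0$. (The paper phrases this directly rather than contrapositively: positivity of that entry forces some term, hence some $a_{i\alpha}>0$, to be positive.) Alternatively, you may keep $\mathbb{A}^k=\mathbb{A}(\mathbb{A}^{k-1})$, but then you must add an induction on $k$: assuming all entries $(\mathbb{A}^{k-1})_{j\,\alpha^*\ldots\alpha^*}$ vanish, every term of $(\mathbb{A}^k)_{i\,\alpha^*\ldots\alpha^*}=\sum_{j_2,\ldots,j_m}a_{ij_2\ldots j_m}(\mathbb{A}^{k-1})_{j_2\,\alpha^*\ldots\alpha^*}\cdots(\mathbb{A}^{k-1})_{j_m\,\alpha^*\ldots\alpha^*}$ vanishes through the \emph{inner} factors, not the outer one. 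Either way the argument closes; as written, yours does not.
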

\begin{proof}
Without loss of generality, we assume   the strongly primitive degree of $\mathbb{A}$, denoted by $\eta(\mathbb{A})$, is $k$.
Then $A^k$ be a positive tensor with order $(m-1)^k+1$ and dimension $n$. Therefore for any $i_1\in [n]$
and any $\alpha_2, \ldots, \alpha_t\in [n]^{m-1}$ where $t=(m-1)^{k-1}$, we can complete the proof by the following equation.

\begin{equation}\label{eq42}
(A^k)_{i_1\alpha_2\ldots \alpha_t}=\sum\limits_{i_2,\ldots, i_t=1}^{n} (A^{k-1})_{i_1i_2\ldots i_t} a_{i_2\alpha_2}\ldots a_{i_t\alpha_t}>0.
\end{equation}
\end{proof}

\begin{example}\label{ex415}
Let $m=n=3$, $\mathbb{A}=(a_{i_1i_2i_3})$ be a nonnegative tensor with order $m$ and dimension $n$, 
where $a_{111}=a_{222}=a_{333}=a_{233}=a_{311}=0$ and other $a_{i_1i_2i_3}=1$. Then $\eta(\mathbb{A})=4.$
\end{example}
\begin{proof}
Firstly, we show that $\mathbb{A}^2=(a_{i_1i_2i_3i_4i_5}^{(2)})$ is a nonnegative tensor with order $5$ and dimension $3$, 
and $a_{i_1j_2j_3j_4j_5}>0$ except for $a_{13333}=a_{21111}=a_{33333}=0$.
We complete the proof by the following four cases and  
$a_{i_1j_2j_3j_4j_5}^{(2)}=\sum\limits_{i_2,i_3=1}^{3}a_{i_1i_2i_3}a_{i_2j_2j_3}a_{i_3j_4j_5}.$

{\bf Case 1: } $j_2\not=j_3$ and $j_4\not=j_5$.

Then $a_{i_2j_2j_3}=a_{i_3j_4j_5}=1$ for any $i_2, i_3\in \{1,2,3\}$. Thus 
$a_{i_1j_2j_3j_4j_5}^{(2)}=\sum\limits_{i_2,i_3=1}^{3}a_{i_1i_2i_3}>0.$

{\bf Case 2: } $j_2=j_3$ and $j_4\not=j_5$.

Then $a_{i_3j_4j_5}=1$ for any $i_3\in \{1,2,3\}$. Thus
$$a_{i_1j_2j_3j_4j_5}^{(2)}=\sum\limits_{i_2,i_3=1}^{3}a_{i_1i_2i_3}a_{i_2j_2j_3}\geq \left\{\begin{array}{ll}
                                                                                           a_{i_123}a_{211}, & \mbox{if } j_2j_3=11; \\
                                                                                           a_{i_113}a_{122}+a_{i_131}a_{322}, & \mbox{if } j_2j_3=22; \\
                                                                                           a_{i_113}a_{133}, & \mbox{if } j_2j_3=33 \\
                                                                                         \end{array}\right.
$$
\hskip3.8cm $>0.$

{\bf Case 3: } $j_2\not=j_3$ and $j_4=j_5$.

The proof is similar to the proof of Case 2.

{\bf Case 4: } $j_2=j_3$ and $j_4=j_5$.

In this case, we know only  $a_{13333}=a_{21111}=a_{33333}=0$ by direct computation.

Similarly,  we can show that $\mathbb{A}^3=(a_{i_1i_2\ldots i_9}^{(3)})$ is a almost positive tensor with order $9$ and dimension $3$
except for $a_{233333333}^{(3)}=0$ by (\ref{eq42}), but $\mathbb{A}^4, \mathbb{A}^5, \ldots$ are positive tensors. 
Thus combining the above cases, we know $\eta(\mathbb{A})=4.$
\end{proof}

\begin{problem}\label{pro416}
Can we define and  study the strongly primitive degree, the strongly exponent set,
the $j$-strongly primitive of strongly primitive tensors and so on?
\end{problem}



\end{document}